
\documentclass{article}
%%%%%%%%%%%%%%%%%%%%%%%%%%%%%%%%%%%%%%%%%%%%%%%%%%%%%%%%%%%%%%%%%%%%%%%%%%%%%%%%%%%%%%%%%%%%%%%%%%%%%%%%%%%%%%%%%%%%%%%%%%%%%%%%%%%%%%%%%%%%%%%%%%%%%%%%%%%%%%%%%%%%%%%%%%%%%%%%%%%%%%%%%%%%%%%%%%%%%%%%%%%%%%%%%%%%%%%%%%%%%%%%%%%%%%%%%%%%%%%%%%%%%%%%%%%%
\usepackage{color}
\usepackage{eurosym}
\usepackage{amsmath}
\usepackage{amsfonts}
\usepackage{verbatim}
\usepackage{graphicx}

\setcounter{MaxMatrixCols}{10}
%TCIDATA{OutputFilter=LATEX.DLL}
%TCIDATA{Version=5.50.0.2960}
%TCIDATA{<META NAME="SaveForMode" CONTENT="1">}
%TCIDATA{BibliographyScheme=Manual}
%TCIDATA{Created=Monday, March 07, 2011 17:39:21}
%TCIDATA{LastRevised=Saturday, March 24, 2018 18:03:09}
%TCIDATA{<META NAME="GraphicsSave" CONTENT="32">}
%TCIDATA{<META NAME="DocumentShell" CONTENT="Standard LaTeX\Blank - Standard LaTeX Article">}
%TCIDATA{Language=American English}
%TCIDATA{CSTFile=40 LaTeX article.cst}

\newtheorem{theorem}{Theorem}

\newtheorem{corollary}[theorem]{Corollary}

\newtheorem{definition}[theorem]{Definition}

\newtheorem{lemma}[theorem]{Lemma}

\newtheorem{proposition}[theorem]{Proposition}
\newtheorem{remark}[theorem]{Remark}

\newenvironment{proof}[1][Proof]{\noindent\textbf{#1.} }{\ \rule{0.5em}{0.5em}}

\input{tcilatex}
\begin{document}

\title{Quantitative statistical stability, speed of convergence to
equilibrium and partially hyperbolic skew products. }
\author{Stefano Galatolo \thanks{
Dipartimento di Matematica, Universita di Pisa, Via \ Buonarroti 1,Pisa -
Italy. Email: galatolo@dm.unipi.it}}
\maketitle

\begin{abstract}
We consider a general relation between fixed point stability of suitably
perturbed transfer operators and convergence to equilibrium (a notion which
is strictly related to decay of correlations). We apply this relation to
deterministic perturbations of a class of (piecewise) partially hyperbolic
skew products whose behavior on the preserved fibration is dominated by the
expansion of the base map. In particular we apply the results to power law
mixing toral extensions. It turns out that in this case, the dependence of
the physical measure on small deterministic perturbations, in a suitable
anisotropic metric is at least H\"{o}lder continuous, with an exponent which
is explicitly estimated depending on the arithmetical properties of the
system. We show explicit examples of toral extensions having actually H\"{o}%
lder stability and non differentiable dependence of the physical measure on
perturbations.
\end{abstract}

\section{Introduction}

The concept of statistical stability of a dynamical system deals with the
stability of the statistical properties of its trajectories when the system
is perturbed or changed in some way. Since the statistical properties of
systems and their behavior are important in many fields of mathematics and
in applied science, the study of this kind of stability has important
applications in many fields. Many important statistical properties of
dynamics are encoded in suitable probability measures which are invariant
for the action of the dynamics; because of this the mathematical approach to
statistical stability is often related to the stability of such invariant
measures under perturbations of the system. In this context it is important
to get quantitative estimates, such as the differentiability of the
statistical properties under small perturbations (called Linear Response) or
other quantitative statements, such as the Lipschitz or H\"{o}lder
dependence. These questions are well understood in the uniformly hyperbolic
case, where the system's derivative has uniformly expanding or contracting
directions. In this case quantitative estimates are available, proving the
Lipschitz and even differentiable dependence of the relevant invariant
measures under perturbations of the system (see e.g \cite{ASsu}, \cite{BB}
or \cite{L2} and related references, for recent surveys where also some
result beyond the uniformly hyperbolic case are discussed). For systems
having not a uniformly hyperbolic behavior, and in presence of
discontinuities, the situation is more complicated and much less is known.
Qualitative results and some quantitative ones (providing precise
information on the modulus of continuity) are known under different
assumptions or in families of cases, and there is not yet a general
understanding of the statistical stability in those cases (see. e.g.\ \cite%
{A}, \cite{AV}, \cite{BV}, \cite{BT}, \cite{BBS}, \cite{BKL}, \cite{BS}, 
\cite{Dol}, \cite{D2}, \cite{Gmann}, \cite{Ko}, \cite{SV}, \cite{zz}).

We approach this question from a general point of view, using a functional
analytic perturbation lemma (see Theorem \ref{gen}) which relates the
convergence to equilibrium speed of the system to the stability of its
invariant measures belonging to suitable spaces. In our case we consider a
space of signed measures equipped with a suitable anisotropic norm adapted
to the system, in which the relevant invariant measures are proved to exist.
We show how, with some technical work, the approach can be applied to slowly
mixing partially hyperbolic skew products. The functional analytic
perturbation lemma we use is quite flexible and was applied in \cite{Gmann}
to the study of the statistical stabiliy of maps with indifferent fixed
points. A similar functional analytic construction was also applied to
piecewise hyperbolic skew product maps and Lorenz like two dimensional maps
in \cite{GL}.

\noindent \textbf{Main results. }The paper has the following structure and
main contents:

\begin{description}
\item[a)] A general quantitative relation between speed of convergence to
equilibrium of the system and its statistical stability (Section \ref{1}).

\item[b)] The application of this relation to a general class of skew
products allowing discontinuities and a sort of partial hyperbolicity,
getting quantitative estimates for their statistical stability in function
of their convergence to equilibrium. Here a main ingredient is the
construction of suitable spaces of regular measures adapted to these
systems. (Sections \ref{spaces},\ref{LY},\ref{Dist}).

\item[c)] The application of this construction to the stability under
deterministic perturbations of a class of piecewise constant \emph{toral
extensions having slow convergence to equilibrium}, getting H\"{o}lder
stability for these examples (Section \ref{Sec2}).

\item[d)] Finally, we show examples of mixing piecewise constant toral
extensions where a perturbation of the map of size $\delta $ results in a
change of physical invariant measure of the size of order $\delta ^{\beta }$%
, where $\beta \leq 1$ depends on the Diophantine properties of the map
(Section\ \ref{slat}).
\end{description}

\noindent Let us explain in more details the content of the items listed
above:

a) This relation is a fixed point stability statement we apply to the
system's transfer operators, giving nontrivial information for systems
having different kinds of speed of convergence to equilibrium, implying for
instance, that \emph{in a system with power law convergence to equilibrium
speed, under quite general additional assumptions, the physical measure is H%
\"{o}lder stable} (see Theorem \ref{gen} and Remark \ref{rmkhl}).

b) We apply the relation to show a general quantitative stability statement
for perturbations of a class of skew products. We consider skew products in
which the\emph{\ base dynamics is expanding and dominates the behavior of
the dynamics on the fibers}. For this purpose we introduce suitable spaces
of signed measures adapted to such systems. We consider spaces of signed
measures having absolutely continuous projection on the base space $[0,1]$
(corresponding to the strongly expanding direction) and equip them with
suitable anisotrpic norms: the weak norm $||~||_{"1"}$ and the strong norm $%
||~||_{p-BV}$, which are defined by disintegrating along the central
foliation (preserved by the skew product) and considering the regularity of
the disintegration. These spaces have properties that make them work quite
like $L^{1}$ and $p$-Bounded Variation real functions spaces in the
classical theory for the statistical properties of one dimensional dynamics.
In Section \ref{LY} we prove a kind of Lasota Yorke inequality in this
framework. This will be used together with a kind of Helly selection
principle proved in Section \ref{spaces} to estimate the regularity of the
invariant measures (like it is done in the classical construction for one
dimensional, piecewise expanding maps). \ We summarize this in the following
result (see Proposition \ref{regu} for a precise and more general statement).

\begin{theorem}
\label{28 copy(2)}Consider a family of skew product maps $F_{\delta
}:[0,1]\times M\rightarrow \lbrack 0,1]\times M,$ where $M$ is a compact
manifold with boundary, $F_{\delta }=(T_{\delta },G_{\delta })$ is such that
the following hold uniformly on $\delta $

\begin{description}
\item[A1] $T_{\delta }:[0,1]\rightarrow \lbrack 0,1]$ is piecewise expanding
with $C^{2}$ onto branches;

\item[A2] the behavior of $G_{\delta }$ on the fibers is dominated by the
expansion of $T_{\delta }$;

\item[A3] $G_{\delta }$ satisfies a sort of BV \ regularity: there is $A>0,$
such that%
\begin{equation*}
\sup_{r\leq A}\frac{1}{r}\int \sup_{y\in M,x_{1},x_{2}\in
B(x,r)}|G(x_{1},y)-G(x_{2},y)|dx<\infty
\end{equation*}
\end{description}

\noindent (see $Sk1,...,Sk3$ \ at beginning of Section \ref{spaces} for
precise statements of these assumptions). Then the maps $F_{\delta }$ have
invariant probability measures $f_{\delta }$ having an absolutely continuous
projection on the base space $[0,1]$ and uniformly bounded $||~||_{p-BV}$
norms (they are uniformly regular in the strong space).
\end{theorem}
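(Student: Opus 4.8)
The plan is to run the classical Lasota--Yorke plus Helly scheme, but inside the anisotropic spaces of signed measures introduced in Section~\ref{spaces}. Write $L_\delta := (F_\delta)_*$ for the transfer operator of $F_\delta$, acting on the space $\mathcal{S}$ of signed measures with absolutely continuous projection on $[0,1]$, equipped with the weak norm $||~||_{"1"}$ and the strong norm $||~||_{p-BV}$ obtained by disintegrating along the central foliation. The first step is to observe that, since A1--A3 (that is, $Sk1$--$Sk3$) hold \emph{uniformly} in $\delta$, the Lasota--Yorke inequality of Section~\ref{LY} holds with constants independent of $\delta$: there are $N\in\mathbb{N}$, $0<\lambda<1$ and $B>0$ with
\begin{equation*}
||L_\delta^{N}\mu||_{p-BV}\leq \lambda\,||\mu||_{p-BV}+B\,||\mu||_{"1"}\qquad\text{for all }\mu\in\mathcal{S},\ \text{all }\delta,
\end{equation*}
together with a uniform bound $||L_\delta\mu||_{"1"}\leq C\,||\mu||_{"1"}$ on the weak norm.

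Second, fix a reference probability measure $\mu_0\in\mathcal{S}$ lying in the strong space (e.g. Lebesgue on $[0,1]$ times a fixed smooth probability on $M$, which has finite $||~||_{p-BV}$ norm) and form the Cesàro averages
\begin{equation*}
f_{\delta,n}:=\frac{1}{n}\sum_{k=0}^{n-1}L_\delta^{k}\mu_0 .
\end{equation*}
Iterating the Lasota--Yorke inequality along $k=qN+r$ and summing the resulting geometric series shows $\sup_{n}\sup_{\delta}||f_{\delta,n}||_{p-BV}=:R<\infty$, still uniformly in $\delta$. Each $f_{\delta,n}$ is a probability measure because $L_\delta$ preserves positivity and total mass (it is a transfer operator), and it still has absolutely continuous base projection since $\mathcal{S}$ is $L_\delta$-invariant.

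Third, apply the Helly-type selection principle of Section~\ref{spaces}: the set $\{\mu\in\mathcal{S}:||\mu||_{p-BV}\leq R,\ \mu\geq 0,\ \mu([0,1]\times M)=1\}$ is sequentially compact for the weak norm $||~||_{"1"}$. Hence, for each $\delta$, a subsequence $f_{\delta,n_j}$ converges in $||~||_{"1"}$ to some $f_\delta$. As in the standard argument, $L_\delta f_{\delta,n}-f_{\delta,n}=\tfrac1n(L_\delta^{n}\mu_0-\mu_0)\to 0$ in $||~||_{"1"}$, and continuity of $L_\delta$ for the weak norm gives $L_\delta f_\delta=f_\delta$, i.e. $F_\delta$-invariance. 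Positivity, unit mass and absolute continuity of the base projection pass to the weak limit (the weak norm controls the relevant linear functionals, in particular the total mass of the base marginal). Finally, lower semicontinuity of $||~||_{p-BV}$ with respect to $||~||_{"1"}$-convergence — part of the functional setup of Section~\ref{spaces} — yields $||f_\delta||_{p-BV}\leq R$ for every $\delta$, which is exactly the asserted uniform regularity in the strong space.

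The genuinely delicate point is not this skeleton but the imported ingredient: proving that the Lasota--Yorke constants $\lambda,B,N$ can be taken independent of $\delta$. This is where A2 (domination of the fiber dynamics by the base expansion) and A3 (the integrated $BV$-type modulus of $G_\delta$) are essential, since one must show that the disintegration of $L_\delta\mu$ along the central foliation inherits $p$-$BV$ regularity from that of $\mu$ with a contraction factor and a lower-order term that are uniform over the family. Granting this, together with the compactness and lower-semicontinuity properties of the pair of norms, the construction above gives the claim; the precise and more general statement, with the explicit form of the constants, is Proposition~\ref{regu}.
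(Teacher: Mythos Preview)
Your skeleton is exactly the paper's: regularization (Lasota--Yorke) inequality, Ces\`aro averages of a reference probability, Helly-type compactness, and lower semicontinuity of the strong norm; this is precisely how Proposition~\ref{regu} is proved. One point, however, is misstated in a way that hides a needed step. The inequality actually established in Section~\ref{LY} (Proposition~\ref{LYYY}) is, for \emph{positive} $\mu$,
\[
var_p(L_\delta\mu)\le (\lambda^p\alpha)\,var_p(\mu)+(\hat H+3q\alpha C_hA^{\xi-p})\,\|\mu_x\|_\infty,
\]
i.e.\ the lower-order term is governed by the \emph{sup norm of the base marginal density} $\|\mu_x\|_\infty$, not by $\|\mu\|_{"1"}$. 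Since $\|\mu_x\|_\infty$ is not controlled by $\|\mu\|_{"1"}$, your iteration ``sum the geometric series'' does not close as written. The paper closes it by invoking the one-dimensional Lasota--Yorke inequality~(\ref{ly1d}) for $T_\delta$ on the base marginal, which gives $\sup_n\|(L_\delta^n\mu_0)_x\|_\infty\le A^{p-1}(A_T\lambda\|(\mu_0)_x\|_{BV}+B_{T_\delta}\|(\mu_0)_x\|_1+1)$ uniformly in $n$ and $\delta$ (see the Remark after Proposition~\ref{LYYY}); only then does iterating Proposition~\ref{LYYY} yield the uniform bound on $var_p$ of the averages. This is exactly where the uniform assumptions on $T_\delta$ (uniform $B_{T_\delta}$, uniform expansion, uniform H\"older constant) enter. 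With that correction your argument coincides with the paper's.
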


In Section \ref{Dist} we consider a class of perturbations of our skew
products such that the related transfer operators are near in some sense
when applied to (regular) measures and state a first \emph{general statement
on the statistical stability of such skew products} (see Proposition \ref%
{thm}).

c) The statement is then applied to \emph{slowly mixing piecewise constant
toral extensions}: systems of the kind $(X,F)$ where $X=[0,1]\times \mathcal{%
T}^{d}$, $\mathcal{T}^{d}$ is the $d$ dimensional torus and $F:X\rightarrow X
$ is defined by%
\begin{equation}
F(\omega ,t)=(T\omega ,t+\mathbf{\tau }(\omega ))  \label{skewsimpl}
\end{equation}%
where $T:[0,1]\rightarrow \lbrack 0,1]$ is expanding and $\tau
:[0,1]\rightarrow \mathcal{T}^{d}$ is a piecewise constant function. The
qualitative ergodic theory of this kind of systems was studied in several
papers (see e.g. \cite{Br},\cite{BW}). Quantitative results appeared more
recently (\cite{Do}, \cite{BE},\cite{Na}), proving from different points of
view that the speed of correlation decay is generically fast\ (exponential),
but \ in some cases where $\tau $ is piecewise constant, this decay follows
a power law whose exponent depend on the diophantine properties of $\mathbf{%
\tau }$ (see \cite{GSR} or Section \ref{decorr1}).

We apply our general result to deterministic perturbations of these maps,
showing that the physical measure of those systems varies at least H\"{o}%
lder continuously in our anisotropic "$L^{1}$ like" distance. We state
informally an example of such an application (see Proposition \ref{28} for a
more general statement and the required definitions).

\begin{theorem}
\label{28 copy(1)}Consider a family of skew product maps $F_{\delta
}:[0,1]\times \mathcal{T}^{d}\rightarrow \lbrack 0,1]\times \mathcal{T}^{d}$
satisfing A1,...,A3, as in Theorem \ref{28 copy(2)}. Let us assume $F_{0}$
is a piecewise constant toral extension as in $(\ref{skewsimpl})$, with%
\begin{equation}
T_{0}(x)=2x~\func{mod}(1)  \label{2x}
\end{equation}%
and%
\begin{equation}
G_{0}(x,t)=(Tx,t+\theta \varphi (x))  \label{22x}
\end{equation}%
where $\theta =(\theta _{1},...,\theta _{d})\in \mathcal{T}^{d}$ has linear
Diophantine type\footnote{%
See Definition \ref{linapp} for a recall about this Diophantine type for
vectors of real numbers.} $\gamma _{l}(\theta )$ and $\varphi =1_{[0,\frac{1%
}{2}]}$ is the characteristic function of $[0,\frac{1}{2}]\footnote{%
For such map the Lebesgue measure $f_{0}$ is invariant for the system.}$.
Suppose $F_{\delta }$ is a small perturbation of $F_{0}$ in the following
sense

\begin{description}
\item[D1] for each $\delta ,~T_{\delta }=T_{0}\circ \sigma $ for some
diffeomorphism $\sigma $ near to the identity $||\sigma -Id||_{\infty }\leq
\delta ,||\frac{1}{\sigma ^{\prime }}-1||_{\infty }\leq \delta $;

\item[D2] for each $\delta $ and $x\in \lbrack 0,1],~y\in \mathcal{T}^{d}:$ $%
|G_{0}(x,y)-G_{\delta }(x,y)|\leq \delta .$
\end{description}

Then and for each $\gamma >\gamma _{l}(\theta )$ there is $K\geq 0$ such
that,%
\begin{equation*}
||f_{\delta }-f_{0}||_{"1"}\leq K\delta ^{\frac{1}{8\gamma +1}}.
\end{equation*}
\end{theorem}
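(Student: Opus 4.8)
The plan is to obtain the statement from the general machinery. By Theorem~\ref{gen} in its skew-product form (Propositions~\ref{thm} and~\ref{28}) it suffices to supply three ingredients for the family $\{F_{\delta}\}$: (i) existence of the invariant probabilities $f_{\delta}$ with a \emph{uniform bound} $||f_{\delta}||_{p-BV}\leq M$; (ii) \emph{operator closeness}, $||(L_{\delta}-L_{0})v||_{"1"}\leq C\delta\,||v||_{p-BV}$ for every $v$ in the strong space, $L_{\delta}$ the transfer operator of $F_{\delta}$; (iii) a \emph{quantitative power-law convergence to equilibrium} for $F_{0}$, $||L_{0}^{n}v||_{"1"}\leq C_{\gamma}\,n^{-\alpha(\gamma)}||v||_{p-BV}$ for every zero-average $v$ in the strong space and $\gamma>\gamma_{l}(\theta)$. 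Given these, one sets $v:=f_{\delta}-f_{0}$, which has zero average and $||v||_{p-BV}\leq 2M$; from $L_{0}f_{0}=f_{0}$ and $L_{\delta}f_{\delta}=f_{\delta}$ one has $(\mathrm{Id}-L_{0})v=(L_{\delta}-L_{0})f_{\delta}$, hence, applying $\sum_{n=0}^{N-1}L_{0}^{n}$, the identity $v=L_{0}^{N}v+\sum_{n=0}^{N-1}L_{0}^{n}(L_{\delta}-L_{0})f_{\delta}$. Bounding the tail by (iii) and each summand by (ii) together with the uniform bound $||L_{0}^{n}||_{"1"\to"1"}\leq\mathrm{const}$ gives $||v||_{"1"}\leq\mathrm{const}\cdot\bigl(N\delta+C_{\gamma}N^{-\alpha(\gamma)}\bigr)$; optimizing over $N$ (balancing the two terms, $N$ of order $\delta^{-1/(\alpha(\gamma)+1)}$) yields Hölder dependence with exponent $\alpha(\gamma)/(\alpha(\gamma)+1)$ — the mechanism behind Remark~\ref{rmkhl}.

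Ingredient (i) is immediate: A1--A3 are assumed, so Theorem~\ref{28 copy(2)} (Proposition~\ref{regu}) provides the $f_{\delta}$ with uniformly bounded $p-BV$ norm, and $f_{0}$ is Lebesgue measure. For (ii) one unwinds D1--D2. By D1, $T_{\delta}=T_{0}\circ\sigma$ with $||\sigma-\mathrm{Id}||_{\infty}\leq\delta$ and $||1/\sigma'-1||_{\infty}\leq\delta$, so a density in the strong space is displaced and reparametrized by $O(\delta)$ in the base, costing $O(\delta\cdot\Var)$ in $L^{1}$; by D2, $|G_{0}(x,y)-G_{\delta}(x,y)|\leq\delta$ everywhere, so along each central fibre the two pushforwards differ by a mass displacement $\leq\delta$, which costs $O(\delta)$ in the anisotropic ``$L^{1}$-like'' norm precisely because the disintegrations of strong-space measures are $p-BV$ regular. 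Adding the two contributions along the lines of Section~\ref{Dist} gives (ii), i.e. $F_{\delta}$ is an $O(\delta)$ perturbation of $F_{0}$ in the sense required by Proposition~\ref{thm}.

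Ingredient (iii) is the crux. Here $F_{0}$ is the piecewise constant toral extension $(\ref{skewsimpl})$ with doubling base $(\ref{2x})$, translation vector $\theta$ of linear Diophantine type $\gamma_{l}(\theta)$ and $\varphi=1_{[0,1/2]}$. One converts the decay-of-correlations estimates for such extensions recalled in Section~\ref{decorr1} (following \cite{GSR}) into the convergence-to-equilibrium bound of (iii): expanding along the fibre Fourier modes, estimating the small-divisor factors $|e^{2\pi i\langle k,\theta\rangle}-1|^{-1}$ against the Diophantine condition, and tracking the loss in passing from the harmonic-analytic/Birkhoff estimates to the operator norm on the adapted anisotropic spaces. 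The resulting exponent is $\alpha(\gamma)=\tfrac{1}{8\gamma}$, which, inserted into the optimization above, gives $||f_{\delta}-f_{0}||_{"1"}\leq K\delta^{1/(8\gamma+1)}$ with $K=K(\gamma)$ blowing up as $\gamma\downarrow\gamma_{l}(\theta)$, whence the strict inequality $\gamma>\gamma_{l}(\theta)$.

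The main obstacle is ingredient (iii): establishing power-law convergence to equilibrium with an explicitly controlled exponent \emph{in the anisotropic pair $(p-BV,\,"1")$} requires handling at once the discontinuity of $\tau$, the arithmetic of $\theta$ encoded in $\gamma_{l}(\theta)$, and the norm losses in the translation from \cite{GSR}; this is where the value $8\gamma$ must be tracked carefully. The fibre part of (ii) — the piecewise-constant translation composed with the base reparametrization acting on disintegrations — is also somewhat delicate but is routine given the spaces and inequalities assembled in Sections~\ref{spaces}--\ref{Dist}.
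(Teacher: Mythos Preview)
Your proposal is correct and follows the same architecture as the paper: assemble (i) uniform $p$-BV bounds on the $f_{\delta}$ from Proposition~\ref{regu}, (ii) strong-to-weak operator closeness from Section~\ref{Dist} (Proposition~\ref{UF}), and (iii) power-law convergence to equilibrium for $F_{0}$ from Section~\ref{conv}, then feed these into the optimization mechanism of Theorem~\ref{gen}/Remark~\ref{rmkhl}. Your telescoping identity $v=L_{0}^{N}v+\sum_{n=0}^{N-1}L_{0}^{n}(L_{\delta}-L_{0})f_{\delta}$ is an equivalent rewriting of the paper's decomposition in the proof of Theorem~\ref{gen}.

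One point where your description diverges from the paper's actual argument is the derivation of the exponent $\alpha(\gamma)=\tfrac{1}{8\gamma}$ in (iii). You suggest obtaining it by ``expanding along the fibre Fourier modes, estimating the small-divisor factors $|e^{2\pi i\langle k,\theta\rangle}-1|^{-1}$''; this is how \cite{GSR} proves the underlying Lipschitz decay-of-correlations rate $n^{-1/(2\gamma)}$, but it is \emph{not} how the paper passes from that to the $(1\!-\!BV,\,{"1"})$ convergence-to-equilibrium estimate. Instead, Section~\ref{conv} takes Proposition~\ref{pro:doc} as a black box and runs a two-scale Lipschitz approximation: a $1$-BV zero-mean measure is first replaced by a piecewise-constant one on an $\epsilon$-grid (Lemma~\ref{prevv}), then mollified in the fibre at scale $\epsilon_{2}$ to get a Lipschitz density to which the correlation bound applies; a dual Lipschitz test observable is built similarly. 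The factor $8$ in $8\gamma$ arises precisely from balancing the three small parameters $\epsilon$, $\epsilon_{2}$, and $\xi$ against the Lipschitz blow-up and the $n^{-1/(2\gamma)}$ decay, choosing $\epsilon=\epsilon_{2}=n^{-1/(8\gamma)}$. So the losses you allude to are approximation losses, not harmonic-analytic ones; if you attempted the Fourier route directly on the anisotropic norms you would in effect be reproving \cite{GSR} and would still need a separate device to compare $||\cdot||_{"1"}$ with the dual-Lipschitz quantity.
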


We remark that the perturbations allowed are quite general. In particular
they allow discontinuities, and the invariant measure to become singular
with respect to the Lebesgue measure after perturbation. We also remark that
for a class of smooth toral extensions with fast decay of correlations, a
differentiable dependence statement was proved in \cite{D2}.

d) We finally show examples of piecewise constant, mixing toral extensions
where the physical measure of the system actually varies in a H\"{o}lder way
(and hence not in a differentiable way) with an exponent depending on the
arithmetical properties of the system. We state informally the main result
abou this, see Propositions \ref{bahh} and \ref{30} for precise statements.

\begin{theorem}
Let us consider a piecewise constant toral extension map $F_{0}:[0,1]\times 
\mathcal{T}^{1}$, as in $(\ref{2x}),(\ref{22x})$, where $\theta $ is a well
approximable Diophantine irrational with $\gamma _{l}(\theta )>2$. For every 
$\gamma <\gamma _{l}(\theta )$ there is a sequence of reals $\delta _{j}\geq
0$, $\delta _{j}\rightarrow 0$ and a sequence of maps $\hat{F}_{\delta
_{j}}(x,y)=(\hat{T}_{0}(x),\hat{G}_{\delta _{j}}(x,y))$ satisfying
A1,...,A3, D1,D2 such that 
\begin{equation*}
||\mu _{0}-\mu _{j}||_{"1"}\geq \frac{1}{9}\delta _{j}{}^{\frac{1}{\gamma -1}%
}
\end{equation*}%
holds for every $j$ and every $\mu _{j}$, invariant borel probability
measure of $\hat{F}_{\delta _{j}}$ with absolutely continuous projection on $%
[0,1]$.
\end{theorem}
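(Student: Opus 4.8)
The plan is to build the destabilising maps $\hat F_{\delta _{j}}$ explicitly out of the strong rational approximations of $\theta $, arranging that their fiber dynamics collapses onto $O(q_{j})$ points in every fiber; since $\mu _{0}=f_{0}=\mathrm{Leb}$ (and a probability carried by $N$ fiber points is $\asymp 1/N$ from Lebesgue in the weak norm), any invariant measure of $\hat F_{\delta _{j}}$ with absolutely continuous base projection is then forced at $||\cdot ||_{"1"}$-distance $\geq c/q_{j}$ from $\mu _{0}$, and a polynomial relation between $q_{j}$ and the perturbation size yields the H\"{o}lder-type lower bound. I expect Proposition \ref{bahh} to be the soft estimate ``all such invariant measures concentrated on $N$ fiber points $\Rightarrow$ their $||\cdot ||_{"1"}$-distance to $\mu _{0}$ is $\geq c/N$'' and Proposition \ref{30} the Diophantine construction realising $N\asymp q_{j}$ with $\delta _{j}$ polynomially small. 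Concretely, fix $1<\gamma <\gamma _{l}(\theta )$; by Definition \ref{linapp} pick $q_{j}\to \infty $ and $p_{j}$ coprime to $q_{j}$ (a suitable subsequence of continued-fraction convergents) with $|\theta -p_{j}/q_{j}|\leq q_{j}^{-\gamma }$, keep the base $\hat T_{0}:=T_{0}=2x\bmod 1$, and put
\[
\hat G_{\delta _{j}}(x,y):=y+\tfrac{p_{j}}{q_{j}}\varphi (x)+\varepsilon _{j}\sin (2\pi q_{j}y),\qquad \varepsilon _{j}:=q_{j}^{-\gamma },\qquad \delta _{j}:=2q_{j}^{-\gamma }.
\]
Then A1 is immediate; D1 holds with $\sigma =\mathrm{Id}$; D2 holds since $||\hat G_{\delta _{j}}-G_{0}||_{\infty }\leq |\theta -p_{j}/q_{j}|+\varepsilon _{j}\leq \delta _{j}$; A2 holds because $\partial _{y}\hat G_{\delta _{j}}=1+2\pi \varepsilon _{j}q_{j}\cos (2\pi q_{j}y)\in (\tfrac{1}{2},2)$ for $j$ large (here $\varepsilon _{j}q_{j}=q_{j}^{1-\gamma }\to 0$), so the fiber behaviour is dominated by $\inf |T_{0}^{\prime }|=2$; and A3 holds because the only $x$-discontinuity of $\hat G_{\delta _{j}}$ is the jump of $\varphi $ at $\tfrac{1}{2}$, of amplitude $\leq 1$, so the $A3$-integral is $\leq 2$ uniformly in $j$. (These have to be matched against the precise $Sk1,...,Sk3$ of Section \ref{spaces}.)

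\emph{Collapse of the invariant measures.} Using the \emph{rational} rotation number $p_{j}/q_{j}$ is what makes the next step exact: in the $q_{j}$-fold covering coordinate $z:=q_{j}y$ the skew product descends to the autonomous circle map $g_{j}(z)=z+\varepsilon _{j}q_{j}\sin (2\pi z)$, that is $z\circ \hat F_{\delta _{j}}=g_{j}\circ z$, because $q_{j}\cdot \tfrac{p_{j}}{q_{j}}\varphi (x)=p_{j}\varphi (x)\in \mathbb{Z}$ disappears $\bmod 1$ and the $z$-dynamics no longer depends on $x$ or on the base. For $j$ large $g_{j}$ is an orientation-preserving circle diffeomorphism with exactly two fixed points, $z=1/2$ attracting and $z=0$ repelling, so its invariant probability measures are precisely the convex combinations of $\delta _{0}$ and $\delta _{1/2}$. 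Since $z_{\ast }\mu $ is $g_{j}$-invariant whenever $\mu $ is $\hat F_{\delta _{j}}$-invariant, any such $\mu $ is carried by $[0,1]\times \{m/(2q_{j}):0\leq m<2q_{j}\}$; moreover if its base projection is absolutely continuous it must be Lebesgue (as $T_{0}=2x\bmod 1$ has a unique absolutely continuous invariant measure), so the fiber disintegrations $(\mu _{j})_{x}$ are, for a.e.\ $x$, probability measures supported on at most $2q_{j}$ points. Such $\mu _{j}$ exist: for instance $\mathrm{Leb}_{[0,1]}\otimes \tfrac{1}{q_{j}}\sum_{k=0}^{q_{j}-1}\delta _{(2k+1)/(2q_{j})}$ is invariant, the fiber maps merely permuting those atoms.

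\emph{Lower bound and conclusion.} By the definition of $||\cdot ||_{"1"}$ recalled in Section \ref{spaces}, $||\mu _{0}-\mu _{j}||_{"1"}=\int_{0}^{1}||\mathrm{Leb}_{\mathcal{T}^{1}}-(\mu _{j})_{x}||_{w}\,dx$, and on probability measures the fiber weak norm $||\cdot ||_{w}$ coincides with the Wasserstein-$1$ (Kantorovich--Rubinstein) distance. An elementary quantization estimate on the circle — the $W_{1}$-distance from the uniform measure to any probability supported on $N$ atoms is at least $\tfrac{1}{4N}$, with equality for $N$ equispaced atoms — then gives $||\mathrm{Leb}_{\mathcal{T}^{1}}-(\mu _{j})_{x}||_{w}\geq \tfrac{1}{8q_{j}}$ for a.e.\ $x$, hence $||\mu _{0}-\mu _{j}||_{"1"}\geq \tfrac{1}{8q_{j}}$ for \emph{every} admissible $\mu _{j}$. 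On the other hand $\delta _{j}^{1/(\gamma -1)}=(2q_{j}^{-\gamma })^{1/(\gamma -1)}=(2/q_{j})^{1/(\gamma -1)}\,q_{j}^{-1}<q_{j}^{-1}$ once $q_{j}>2$, so $||\mu _{0}-\mu _{j}||_{"1"}\geq \tfrac{1}{8q_{j}}>\tfrac{1}{9}\,\delta _{j}^{1/(\gamma -1)}$; discarding the finitely many indices with $q_{j}$ too small gives the inequality for all $j$.

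\emph{Main obstacle.} The conceptual skeleton — the covering-coordinate decoupling of the second step — is short. The actual work is, first, the honest verification of A2 against the precise ``domination'' hypothesis $Sk2$, uniformly in $j$: this is exactly what bounds $\varepsilon _{j}q_{j}$ and thereby controls which sizes $\delta _{j}$ are admissible. Second, and more substantially, the whole estimate rests on turning ``$\gamma <\gamma _{l}(\theta )$'' into the quantitative approximation $|\theta -p_{j}/q_{j}|\leq q_{j}^{-\gamma }$ along a subsequence, in exactly the form provided by Definition \ref{linapp}, and on the subsequent bookkeeping linking $q_{j}$, $\varepsilon _{j}$, $\delta _{j}$ to the target exponent $\tfrac{1}{\gamma -1}$ and constant $\tfrac{1}{9}$. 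I expect Proposition \ref{30} to be precisely this accounting, with Proposition \ref{bahh} supplying the quantization-based lower bound of the third step.
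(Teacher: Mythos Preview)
Your argument is correct and follows the same two-step strategy as the paper: replace the irrational angle $\theta$ by a good rational approximant $p_{j}/q_{j}$, then add a small fiber perturbation making the resulting periodic fiber orbit an attractor so that \emph{every} $\mathcal{L}^{1}$ invariant measure is forced to be supported on $O(q_{j})$ fiber points, which is $\asymp 1/q_{j}$ away from Lebesgue in $||\cdot||_{"1"}$. The difference is only in execution. The paper takes the same $y\mapsto y+(p_{j}/k_{j})\varphi(x)$ as its first perturbation and then composes with an ad hoc $C^{\infty}$ bump $D_{\delta}(y)=y+\delta g(y)$ built by hand so that the $k_{j}$ points become attractors and the midpoints repellers; it then reads off $||\mu_{0}-\mu_{j}||_{"1"}\geq \tfrac{1}{9k_{j}}$ directly. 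Your choice $\varepsilon_{j}\sin(2\pi q_{j}y)$ is a cleaner realisation of the same idea: the covering coordinate $z=q_{j}y$ turns the fiber dynamics into the autonomous Morse--Smale circle map $z\mapsto z+\varepsilon_{j}q_{j}\sin(2\pi z)$, which makes the classification of invariant measures (and hence the ``every $\mu_{j}$'' clause) transparent. Your quantization test function $f(y)=\mathrm{dist}(y,\mathrm{supp}\,\nu)$ is also more explicit than the paper's unargued $\tfrac{1}{9k}$.

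One small correction to your expectations: Proposition~\ref{bahh} is not a separate ``soft estimate'' but is itself the full proof of this theorem (both the Diophantine bookkeeping and the attractor construction), while Proposition~\ref{30} is a different statement, exhibiting a single observable $\psi$ with $L^{2}$ derivative whose averages move by $\geq C\sqrt{\delta_{j}}$ for a particular Liouville $\theta$.
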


This shows that in some sense, the general statistical stability result is
sharp. We remark that recently, in \cite{zz} examples of $C^{r}$ families of 
\emph{mostly contracting} diffeomorphisms with strictly H\"{o}lder \
behavior have been given (see also \cite{Dol} for previous results on H\"{o}%
lder stability of these kinds of partially hyperbolic maps).

\noindent \textbf{Aknowledgements. }{The work was partially supported by EU
Marie-Curie IRSES Brazilian-European partnership in Dynamical Systems
(FP7-PEOPLE-2012-IRSES 318999 BREUDS), and by The Leverhulme Trust through
Network Grant IN-2014-021.}

\section{Quantitative fixed point stability and convergence to equilibrium. 
\label{1}}

Let us consider a dynamical system $(X,T)$ where $X$ is a metric space and
the space $SM(X)$ of signed Borel measures on $X$. The dynamics $T$
naturally induces a function $L:SM(X)\rightarrow SM(X)$ which is linear and
is called transfer operator. If $\nu \in SM(X)$ then $L[\nu ]\in SM(X)$ is
defined by%
\begin{equation*}
L[\nu ](B)=\nu (T^{-1}(B))
\end{equation*}%
for every measurable set $B$. If $X$ is a manifold, the measure is
absolutely continuous ($d\nu =f~dm$, where $m$ represents the Lebesgue
measure) and \thinspace $T$ is nonsingular, the operator induces another
operator $\tilde{L}:L^{1}(m)\rightarrow L^{1}(m)$ acting on measure
densities ($\tilde{L}f=\frac{d(L(f~m))}{dm}$). By a small abuse of notation
we will still indicate by $L$ this operator.

An invariant measure is a fixed point for the transfer operator. Let us now
see a quantitative stability statement for these fixed points under suitable
perturbations of the operator. \ Let us consider a certain system having a
transfer operator $L_{0}$ for which we know the speed of convergence to
equilibrium (see (\ref{equil}) below). Consider a "nearby" system $L_{1}$
having suitable properties: suppose there are two normed vector spaces of
measures with sign $B_{s}\subseteq B_{w}\subseteq SM(X)$ (the strong and
weak space) with norms $||~||_{w}\leq ||~||_{s}$ and suppose the operators $%
L_{0}$ and $L_{1}$ preserve the spaces: $L_{i}(B_{s})\subseteq B_{s}$ and $%
L_{i}(B_{w})\subseteq B_{w}$ with $i\in \{0,1\}$. Let us consider%
\begin{equation*}
V_{s}:=\{f\in B_{s},f(X)=0\}
\end{equation*}%
the space of zero average measures in $B_{s}$. The speed of convergence to
equilibrium of a system will be measured by the speed of contraction to $0$
of this space by the iterations of the transfer operator.

\begin{definition}
\label{d1}Let $\phi (n)$ be a real sequence converging to zero. We say that
the system has \emph{convergence to equilibrium }with respect to norms $%
||~||_{w}$, $||~||_{s}$ and speed $\phi $ if $\forall g\in V_{s}$%
\begin{equation}
||L_{0}^{n}(g)||_{w}\leq \phi (n)||g||_{s}.  \label{equil}
\end{equation}
\end{definition}

Suppose $f_{0}$, $f_{1}\in B_{s}$ are fixed probability measures of $L_{0}$
and $L_{1}$. The following statement relates the distance between $f_{0}$
and $f_{1}$ with the distance between $L_{0}$ and $L_{1}$ and the speed of
convergence to equilibrium of $L_{0}.$ The proof is elementary, we include
it for completeness. Similar quantitative stability statements are used in 
\cite{GN}, \cite{GL} and \cite{Gmann} to support rigorous computation of
invariant measures, get quantitative estimates for the statistical stability
of Lorenz like maps and intermittent systems.

\begin{theorem}
\label{gen}Suppose we have estimates on the following aspects of the
operators $L_{0}$ and $L_{1}$:

\begin{enumerate}
\item (speed of convergence to equilibrium) there is $\phi \in C^{0}(\mathbb{%
R)},~\phi (t)$ decreasing to $0$ as $t\rightarrow \infty $ such that $L_{0}$
has convergence to equilibrium with respect to norms $||~||_{w}$, $||~||_{s}$%
and speed $\phi $.

\item (control on \ the norms of the invariant measures) There is $\tilde{M}%
\geq 0$ such that%
\begin{equation*}
\max (||f_{1}||_{s},||f_{0}||_{s})\leq \tilde{M};
\end{equation*}

\item (iterates of the transfer operator are bounded for the weak norm)
there is $\tilde{C}\geq 0$ such that for each $n$, 
\begin{equation*}
||L_{0}^{n}||_{B_{w}\rightarrow B_{w}}\leq \tilde{C}.
\end{equation*}

\item (control on the size of perturbation in the strong-weak norm) Denote 
\begin{equation*}
\underset{||f||_{s}\leq 1}{\sup }||(L_{1}-L_{0})f||_{w}:=\epsilon
\end{equation*}%
consider the decreasing function $\psi $ defined as $\psi (x)=\frac{\phi (x)%
}{x}$, then \ we have the following explicit estimate%
\begin{equation*}
||f_{1}-f_{0}||_{w}\leq 2\tilde{M}\tilde{C}\epsilon (\psi ^{-1}(\frac{%
\epsilon \tilde{C}}{2})+1).
\end{equation*}
\end{enumerate}
\end{theorem}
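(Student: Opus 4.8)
The plan is to exploit that $f_{0}$ and $f_{1}$ are fixed points of $L_{0}$ and $L_{1}$ respectively, so that for every $n\in\mathbb{N}$ we have $f_{0}=L_{0}^{n}f_{0}$ and $f_{1}=L_{1}^{n}f_{1}$, and hence
\[
f_{0}-f_{1}=L_{0}^{n}(f_{0}-f_{1})+(L_{0}^{n}-L_{1}^{n})f_{1}.
\]
First I would bound the term $L_{0}^{n}(f_{0}-f_{1})$: since $f_{0}$ and $f_{1}$ are both probability measures, $f_{0}-f_{1}\in V_{s}$, so assumption (1) (convergence to equilibrium) together with assumption (2) gives $||L_{0}^{n}(f_{0}-f_{1})||_{w}\le \phi(n)||f_{0}-f_{1}||_{s}\le 2\tilde{M}\phi(n)$.

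Next I would handle $(L_{0}^{n}-L_{1}^{n})f_{1}$ via the telescoping identity $L_{0}^{n}-L_{1}^{n}=\sum_{k=0}^{n-1}L_{0}^{\,n-1-k}(L_{0}-L_{1})L_{1}^{k}$, and then use the key observation that $L_{1}^{k}f_{1}=f_{1}$, which collapses the $k$-th term to $L_{0}^{\,n-1-k}(L_{0}-L_{1})f_{1}$ and removes any need to control the iterates of $L_{1}$. Estimating with assumption (3) (the uniform weak-norm bound $\tilde{C}$ on $L_{0}^{j}$), the definition of $\epsilon$ in assumption (4), and assumption (2) (so that $||(L_{0}-L_{1})f_{1}||_{w}\le \epsilon||f_{1}||_{s}\le \epsilon\tilde{M}$), each of the $n$ summands has weak norm at most $\tilde{C}\epsilon\tilde{M}$. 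Combining the two estimates yields, for every $n$,
\[
||f_{1}-f_{0}||_{w}\le \tilde{M}\big(2\phi(n)+n\tilde{C}\epsilon\big).
\]

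The final step is to optimize the choice of $n$ so as to balance the two competing terms. With $\psi(x)=\phi(x)/x$ they are of the same order when $\psi(n)=\tilde{C}\epsilon/2$, so I would take $n=\lceil \psi^{-1}(\tilde{C}\epsilon/2)\rceil$; then $n\le \psi^{-1}(\tilde{C}\epsilon/2)+1$ and, because $\psi$ is decreasing, $\phi(n)/n=\psi(n)\le \tilde{C}\epsilon/2$, i.e. $2\phi(n)\le n\tilde{C}\epsilon$. Substituting gives $||f_{1}-f_{0}||_{w}\le 2\tilde{M}\tilde{C}\epsilon\,n\le 2\tilde{M}\tilde{C}\epsilon\big(\psi^{-1}(\tilde{C}\epsilon/2)+1\big)$, which is the claimed bound.

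I do not expect any real obstacle: the argument is elementary, as the authors note. The only points that need a little care are choosing the telescoping order so that the fixed-point property of $L_{1}$ (rather than any bound on $||L_{1}^{k}||$) does the work, checking that rounding up in the choice of $n$ does not spoil the estimate, and implicitly assuming $\tilde{C}\epsilon/2$ lies in the range of $\psi$ so that $\psi^{-1}$ is defined — for large $\epsilon$ the inequality is either vacuous or covered by the trivial bound $||f_{1}-f_{0}||_{w}\le ||f_{1}||_{s}+||f_{0}||_{s}\le 2\tilde{M}$.
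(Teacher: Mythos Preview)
Your proposal is correct and follows essentially the same approach as the paper: the same fixed-point decomposition $f_{0}-f_{1}=L_{0}^{n}(f_{0}-f_{1})+(L_{0}^{n}-L_{1}^{n})f_{1}$, the same telescoping sum with the key use of $L_{1}^{k}f_{1}=f_{1}$, and the same optimization of $n$ via $\psi^{-1}(\tilde{C}\epsilon/2)$. The paper's proof is slightly terser but the logic is identical.
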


\begin{proof}
The proof is a direct computation from the assumptions%
\begin{eqnarray*}
||f_{1}-f_{0}||_{w} &\leq &||L_{1}^{N}f_{1}-L_{0}^{N}f_{0}||_{w} \\
&\leq
&||L_{1}^{N}f_{1}-L_{0}^{N}f_{1}||_{w}+||L_{0}^{N}f_{1}-L_{0}^{N}f_{0}||_{w}
\\
&\leq &||L_{0}^{N}(f_{1}-f_{0})||_{w}+||L_{1}^{N}f_{1}-L_{0}^{N}f_{1}||_{w}.
\end{eqnarray*}%
Since $f_{1}-f_{0}\in V_{s}$ , $||f_{1}-f_{0}||_{s}\leq 2\tilde{M}$,%
\begin{equation*}
||f_{1}-f_{0}||_{w}\leq 2\tilde{M}\phi
(N)+||L_{1}^{N}f_{1}-L_{0}^{N}f_{1}||_{w}
\end{equation*}%
but%
\begin{equation*}
(L_{0}^{N}-L_{1}^{N})=\sum_{k=1}^{N}L_{0}^{N-k}(L_{0}-L_{1})L_{1}^{k-1}
\end{equation*}%
hence%
\begin{eqnarray*}
-(L_{1}^{N}-L_{0}^{N})f_{1}
&=&\sum_{k=1}^{N}L_{0}^{N-k}(L_{0}-L_{1})L_{1}^{k-1}f_{1} \\
&=&\sum_{k=1}^{N}L_{0}^{N-k}(L_{0}-L_{1})f_{1}
\end{eqnarray*}%
then%
\begin{equation*}
||f_{1}-f_{0}||_{w}\leq 2\tilde{M}\phi (N)+\epsilon \tilde{M}N\tilde{C}.
\end{equation*}

Now consider the function $\psi $ defined as $\psi (x)=\frac{\phi (x)}{x},$
choose $N$ such that $\psi ^{-1}(\frac{\epsilon \tilde{C}}{2})\leq N\leq
\psi ^{-1}(\frac{\epsilon \tilde{C}}{2})+1$, in this way $\frac{\phi (N)}{N}%
\leq \frac{\epsilon \tilde{C}}{2}\leq \frac{\phi (N-1)}{N-1}$ and%
\begin{equation*}
||f_{1}-f_{0}||_{w}\leq 2\tilde{M}\tilde{C}\epsilon (\psi ^{-1}(\frac{%
\epsilon \tilde{C}}{2})+1).
\end{equation*}
\end{proof}

Theorem \ref{gen} \ implies that a system having convergence to equilibrium
is statistically stable (in the weak norm).

\begin{corollary}
\label{k1} Let $L_{\delta }$, $\delta \in \lbrack 0,\bar{\delta})$ be a
family of transfer operators under the assumptions of Theorem \ref{gen},
including $\lim_{n\rightarrow \infty }\phi (n)=0.$ Let $f_{\delta }$ be a \
fixed probability measure of $L_{\delta }.$ Suppose there is $C\geq 0$ such
that for every $\delta $ 
\begin{equation*}
\underset{||f||_{s}\leq 1}{\sup }||(L_{\delta }-L_{0})f||_{w}\leq C\delta .
\end{equation*}
Then $f_{0}$ is the unique fixed probability measure in $B_{s}$ and it holds%
\begin{equation*}
\lim_{\delta \rightarrow 0}||f_{\delta }-f_{0}||_{w}=0.
\end{equation*}
\end{corollary}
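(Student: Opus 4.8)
The plan is to derive both conclusions directly from Theorem~\ref{gen} and the definition of convergence to equilibrium, with essentially no new technical work.

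For the quantitative convergence $||f_{\delta}-f_{0}||_{w}\to 0$, I would apply Theorem~\ref{gen} to the pair $(L_{0},L_{\delta})$ for each fixed $\delta$. The hypothesis $\sup_{||f||_{s}\leq 1}||(L_{\delta}-L_{0})f||_{w}\leq C\delta$ says that the quantity $\epsilon$ appearing in assumption~(4) of Theorem~\ref{gen} satisfies $\epsilon\leq C\delta$, while the constants $\tilde{M}$ (uniform bound on $||f_{\delta}||_{s}$) and $\tilde{C}$ (uniform bound on $||L_{0}^{n}||_{B_{w}\to B_{w}}$) are supplied by the phrase "under the assumptions of Theorem~\ref{gen}" and are independent of $\delta$. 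The conclusion of that theorem then gives
\begin{equation*}
||f_{\delta}-f_{0}||_{w}\leq 2\tilde{M}\tilde{C}\,\epsilon\left(\psi^{-1}\left(\frac{\epsilon\tilde{C}}{2}\right)+1\right),\qquad \psi(x)=\frac{\phi(x)}{x}.
\end{equation*}
It remains to check that the right-hand side tends to $0$ as $\delta\to 0$, equivalently as $\epsilon\to 0$. The term $2\tilde{M}\tilde{C}\epsilon\to 0$ trivially, so the only point is to show $\epsilon\,\psi^{-1}(\epsilon\tilde{C}/2)\to 0$. Here I would substitute $N:=\psi^{-1}(\epsilon\tilde{C}/2)$, so that $\phi(N)/N=\epsilon\tilde{C}/2$ and hence $\epsilon N=2\phi(N)/\tilde{C}$. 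As $\epsilon\to 0$ we have $N\to\infty$ (since $\psi$ is decreasing to $0$, its inverse blows up near $0$), and therefore $\epsilon N=2\phi(N)/\tilde{C}\to 0$ precisely because $\phi(n)\to 0$ by assumption. This is the only place where the hypothesis $\lim_{n\to\infty}\phi(n)=0$ (rather than mere boundedness of $\phi$) enters, and it is the crux of the argument.

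For the uniqueness of the fixed probability measure of $L_{0}$ in $B_{s}$, I would argue directly from convergence to equilibrium. If $f_{0}$ and $f_{0}'$ are both fixed probability measures of $L_{0}$ lying in $B_{s}$, set $g:=f_{0}-f_{0}'$. Then $g\in B_{s}$ and $g(X)=1-1=0$, so $g\in V_{s}$; moreover, by linearity $L_{0}g=g$ and hence $L_{0}^{n}g=g$ for every $n$. Definition~\ref{d1} gives $||g||_{w}=||L_{0}^{n}g||_{w}\leq\phi(n)\,||g||_{s}$ for all $n$, and letting $n\to\infty$ forces $||g||_{w}=0$. Since $||\cdot||_{w}$ is a norm on $B_{w}$, this yields $g=0$, i.e. $f_{0}=f_{0}'$.

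The argument is elementary; the only mild subtlety worth flagging is the requirement that $\tilde{M}$ and $\tilde{C}$ in Theorem~\ref{gen} be uniform in $\delta$. The bound $\tilde{C}$ concerns $L_{0}$ alone and is automatically $\delta$-independent, while the uniform bound $\tilde{M}$ on $||f_{\delta}||_{s}$ is exactly what is meant by saying the family $L_{\delta}$ is under the assumptions of Theorem~\ref{gen}. With that understood, no estimates beyond the limit computation $\lim_{\epsilon\to 0}\epsilon\,\psi^{-1}(\epsilon\tilde{C}/2)=0$ are needed.
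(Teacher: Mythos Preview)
Your proposal is correct and follows essentially the same approach as the paper: both derive the stability from the explicit bound of Theorem~\ref{gen} together with $\phi(n)\to 0$, and both dispatch uniqueness directly from the definition of convergence to equilibrium. The only cosmetic difference is that the paper argues the limit by contradiction (assuming $||f_{\delta_n}-f_0||_w\geq l$ and manipulating the inequality until it conflicts with $\phi\to 0$), whereas your direct substitution $N=\psi^{-1}(\epsilon\tilde{C}/2)$ giving $\epsilon N=2\phi(N)/\tilde{C}\to 0$ is a slightly cleaner version of the same computation.
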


\begin{proof}
The uniqueness of $f_{0}$ is trivial from the definition of convergence to
equilibrium. For the stability, suppose there was a sequence $\delta
_{n}\rightarrow 0$ and $l\geq 0$ such that $||f_{\delta
_{n}}-f_{0}||_{w}\geq l$ $\forall n$, then 
\begin{eqnarray*}
2\tilde{M}\tilde{C}C\delta _{n}(\psi ^{-1}(\frac{\delta _{n}C\tilde{C}}{2}%
)+1) &\geq &l \\
\psi ^{-1}(\frac{\delta _{n}C\tilde{C}}{2}) &\geq &\frac{l}{\delta _{n}2%
\tilde{M}\tilde{C}C}-1 \\
\frac{\delta _{n}C\tilde{C}}{2} &\leq &\frac{\phi (\frac{l}{\delta _{n}2%
\tilde{M}\tilde{C}C}-1)}{\frac{l}{\delta _{n}2\tilde{M}\tilde{C}C}-1} \\
(\frac{l}{\delta _{n}2\tilde{M}\tilde{C}C}-1)\frac{\delta _{n}C\tilde{C}}{2}
&\leq &\phi (\frac{l}{\delta _{n}2\tilde{M}\tilde{C}C}-1) \\
\frac{l}{4\tilde{M}}-\delta _{n}C\tilde{C} &\leq &\phi (\frac{l}{\delta _{n}2%
\tilde{M}\tilde{C}C}-1)
\end{eqnarray*}%
which is impossible to hold as $\delta _{n}\rightarrow 0.$
\end{proof}

\begin{remark}
\label{rmkhl}In Theorem \ref{gen} , if $\phi (x)=Cx^{-\alpha }$ then $\psi
(x)=Cx^{-\alpha -1}$, $\epsilon (\psi ^{-1}(\epsilon )+1)\sim \epsilon ^{1-%
\frac{1}{\alpha +1}}$ and we have the estimate for the modulus of continuity%
\begin{equation*}
||f_{1}-f_{0}||_{w}\leq K_{1}\epsilon ^{1-\frac{1}{\alpha +1}}
\end{equation*}%
where the constant $K_{1}$ depends on $\tilde{M},\tilde{C},C$ and not on the
distance between the operators measured by $\epsilon $.
\end{remark}

\section{Spaces we consider\label{spaces}}

Our approach is based on the study of the transfer operator restricted to a
suitable space of measures with sign. We introduce a space of regular
measures where we can find the invariant measure of our systems, and the
ones of suitable perturbations of it. We hence consider some measure spaces
adapted to skew products. The approach is taken from \cite{GL} (see also 
\cite{AGP}) where it was used for Lorenz-like two dimensional maps. Let us
consider a map $F:X\rightarrow X$ where $X=[0,1]\times M,$ and $M$ is a
compact manifold with boundary, such that 
\begin{equation}
F(x,y)=(T(x),G(x,y)).  \label{1eq}
\end{equation}%
Suppose $F$ satisfies the following conditions:

\begin{description}
\item[Sk1] Suppose $T$ is $\frac{1}{\lambda }$-expanding\footnote{%
We suppose that $\underset{x\in \lbrack 0,1]}{\inf }T^{\prime }(x)\geq \frac{%
1}{\lambda }$ for some $\lambda <1.$} and it has $C^{1+\xi }$ branches%
\footnote{%
More precisely we suppose that there are $\xi $,$C_{h}\geq 0$ such that 
\begin{equation*}
\frac{1}{|T_{i}^{^{\prime }}\circ T_{i}^{-1}(\gamma _{2}))|}-\frac{1}{%
|T_{i}^{^{\prime }}\circ T_{i}^{-1}(\gamma _{1}))|}\leq C_{h}d(\gamma
_{1},\gamma _{2})^{\xi },\forall \gamma _{1},\gamma _{2}\in \lbrack 0,1].
\end{equation*}%
} which are onto. The branches will be denoted by $T_{i}$, $i\in \lbrack
1,..,q].$

\item[Sk2] Consider the $F$-invariant foliation $\mathcal{F}%
^{s}:=\{\{x\}\times M\}_{x\in \lbrack 0,1]}$. We suppose that the behavior
on $\mathcal{F}^{s}$ is dominated by $\lambda $: there is $\alpha \in 
\mathbb{R}$ with $\lambda ^{\xi }\alpha <1$, such that for all $x\in \lbrack
0,1]$ holds%
\begin{equation}
|G(x,y_{1})-G(x,y_{2})|\leq \alpha |y_{1}-y_{2}|\ \ \mathnormal{for\ all}\ \
y_{1},y_{2}\in M.  \label{contracting1}
\end{equation}

\item[Sk3] For each $p\leq \xi $ there is $A>0,$ such that%
\begin{equation*}
\hat{H}:=\sup_{r\leq A}\frac{1}{r^{p}}\int \sup_{y\in M,x_{1},x_{2}\in
B(x,r)}|G(x_{1},y)-G(x_{2},y)|dx<\infty
\end{equation*}
\end{description}

\begin{remark}
\label{Kell}We remark that $Sk3$ \ allows discontinuities in $G,$ provided a
kind of bounded variation regularity is respected. $Sk2$ allows a dominated
expansion or contraction in the fibers direction. Furthermore, by $Sk1$ the
transfer operator of the map $T$ satisfies a Lasota Yorke inequality of the
kind 
\begin{equation}
||L_{T}^{n}(\mu )||_{BV}\leq A_{T}\lambda ^{n}||\mu ||_{BV}+B_{T}||\mu ||_{1}
\label{ly1d}
\end{equation}%
where $||\mu ||_{BV}$ is the generalized bounded variation norm (see \cite%
{Gk}); for some constant $A_{T}$ and $B_{T}$ depending on the map.
\end{remark}

\begin{definition}
We say that a family of maps $F_{\delta }=(T_{\delta }(x),G_{\delta }(x,y))$
satistifies $Sk1,...,Sk3$ \emph{uniformly}, if each $T_{\delta }$ is
piecewise expanding, with onto $C^{1+\xi }$ branches, admitting a uniform
expansion rate $\frac{1}{\lambda },$ a uniform $\alpha $, a uniform H\"{o}%
lder constant $C_{h}$, a uniform second coefficient of the Lasota Yorke
inequality $B_{T\delta }$ and furthermore the family $G_{\delta }$ satisfies 
$Sk3$ with a uniform bound on the constant $\hat{H}$.
\end{definition}

We construct now some function spaces which are suitable for the systems we
consider. The idea is to consider spaces of measures with sign, with
suitable norms constructed by disintegrating measures along the central
foliation. In this way a measure on $X$ will be seen as a collection (a
path) of measures on the leaves. In the central direction (on the leaves) we
will consider a norm which is the dual of the Lipschitz norm. In the
expanding direction we will consider the $L^{1}$ norm and a suitable
variation norm. These ideas will be implemented in the next paragraphs.

Let $(X,d)$ be a compact metric space, $g:X\longrightarrow \mathbb{R}$ be a
Lipschitz function and let $Lip(g)$ be its best Lipschitz constant, i.e. 
\begin{equation*}
\displaystyle{Lip(g)=\sup_{x,y\in X}\left\{ \dfrac{|g(x)-g(y)|}{d(x,y)}%
\right\} }.
\end{equation*}

\begin{definition}
Given two signed Borel measures $\mu $ and $\nu $ on $X,$ we define a 
\textbf{Wasserstein-Kantorovich Like} distance between $\mu $ and $\nu $ by%
\begin{equation}
W_{1}(\mu ,\nu )=\sup_{Lip(g)\leq 1,||g||_{\infty }\leq 1}\left\vert \int {g}%
d\mu -\int {g}d\nu \right\vert .
\end{equation}%
\label{wasserstein}
\end{definition}

Let us denote%
\begin{equation*}
||\mu ||_{W_{1}}:=W_{1}(0,\mu ).
\end{equation*}%
As a matter of fact, $||\cdot ||_{W_{1}}$ defines a norm on the vector space
of signed measures defined on a compact metric space.

Let $\mathcal{SB}(\Sigma )$ be the space of Borel signed measures on $\Sigma$%
. Given $\mu \in \mathcal{SB}(\Sigma )$ denote by $\mu ^{+}$ and $\mu ^{-}$
the positive and the negative parts of it ($\mu =\mu ^{+}-\mu ^{-}$).

Denote by $\mathcal{AB}$ the set of signed measures $\mu \in \mathcal{SB}%
(\Sigma )$ such that its associated marginal signed measures, $\mu _{x}^{\pm
}=\pi _{x}^{\ast }\mu ^{\pm }$ are absolutely continuous with respect to the
Lebesgue measure $m$, on $[0,1]$ i.e.%
\begin{equation*}
\mathcal{AB}=\{\mu \in \mathcal{SB}(\Sigma ):\pi _{x}^{\ast }\mu ^{+}<<m\ \ 
\mathnormal{and}\ \ \pi _{x}^{\ast }\mu ^{-}<<m\}
\end{equation*}%
where $\pi _{x}:X\longrightarrow \lbrack 0,1]$ is the projection defined by $%
\pi _{x}(x,y)=x$ and \ $\pi _{x}^{\ast }$ is the associated pushforward map.

Let us consider a finite positive measure $\mu \in \mathcal{AB}$ \ on the
space $X$ foliated by the preserved leaves $\mathcal{F}^{c}=\{\gamma
_{l}\}_{l\in \lbrack 0,1]}$ such that $\gamma _{l}={\pi _{x}}^{-1}(l)$. We
will also call $\mathcal{F}^{c}$ as the \emph{central foliation. }Let us
denote $\mu _{x}=\pi _{x}^{\ast }\mu $ and let $\ \phi _{\mu }$ be its
density ($\mu _{x}=\phi _{\mu }m$ ). The Rokhlin disintegration theorem
describes a disintegration of $\mu $ by a family $\{\mu _{\gamma }\}_{\gamma
}$ of probability measures on the central leaves\footnote{%
In the following to simplify notations, when no confusion is possible we
will indicate the generic leaf or its coordinate with $\gamma $.} in a way
that the following holds.

\begin{remark}
The disintegration of a measure $\mu $ is the $\mu _{x}$-unique measurable
family $(\{\mu _{\gamma }\}_{\gamma },\phi _{\mu })$ such that, for every
measurable set $E\subset X$ it holds 
\begin{equation}
\mu (E)=\int_{[0,1]}{\mu _{\gamma }(E\cap \gamma )}d\mu _{x}(\gamma ).
\label{eee6}
\end{equation}%
\label{rmkv}
\end{remark}

\begin{definition}
Let $\pi _{\gamma ,y}:\gamma \longrightarrow M$ be the restriction $\pi
_{y}|_{\gamma }$, where $\pi _{y}:X\longrightarrow M$ is the projection
defined by $\pi _{y}(x,y)=y$ and $\gamma \in \mathcal{F}^{c}$. Given a
positive measure $\mu \in \mathcal{AB}$ and its disintegration along the
stable leaves $\mathcal{F}^{c}$, $\left( \{\mu _{\gamma }\}_{\gamma },\phi
_{\mu }\right) $, we define the \textbf{restriction of $\mu $ on $\gamma $}
as the positive measure $\mu |_{\gamma }$ on $M$ (not on the leaf $\gamma $)
defined as 
\begin{equation*}
\mu |_{\gamma }=\pi _{\gamma ,y}^{\ast }(\phi _{\mu }(\gamma )\mu _{\gamma
}).
\end{equation*}

\begin{definition}
For a given signed measure $\mu \in \mathcal{AB}$ and its decomposition $\mu
=\mu ^{+}-\mu ^{-}$, define the \textbf{restriction of $\mu $ on $\gamma $}
by%
\begin{equation}
\mu |_{\gamma }=\mu ^{+}|_{\gamma }-\mu ^{-}|_{\gamma }.
\end{equation}%
\label{restrictionmeasure}
\end{definition}

\begin{definition}
Let $\mathcal{L}^{1}\subseteq \mathcal{AB}$ be defined as%
\begin{equation}
\mathcal{L}^{1}=\left\{ \mu \in \mathcal{AB}:\int_{[0,1]}||\mu |_{\gamma
}||_{W_{1}}~dm(\gamma )<\infty \right\}  \label{L1measurewithsign}
\end{equation}%
and define norm on it, $||\cdot ||_{"1"}:\mathcal{L}^{1}\longrightarrow 
\mathbb{R}$, by%
\begin{equation}
||\mu ||_{"1"}=\int_{[0,1]}||\mu |_{\gamma }||_{W_{1}}~dm(\gamma ).
\label{l1normsm}
\end{equation}
\end{definition}
\end{definition}

The notation we use for this norm is similar to the usual $L^{1}$ norm.
Indeed this is formally the case if we associate to $\mu ,$ by
disintegration, a path $G_{\mu }:[0,1]\rightarrow (\mathcal{SB}%
(M),||~||_{W_{1}})$ defined by $\ G_{\mu }($ $\gamma )=\mu |_{\gamma }$. In
this case, this will be the $L^{1}$ norm of the path. \label{l1likespace}

\subsection{The transfer operator associated to $F$ and basic properties of $%
\mathcal{L}^{1}$}

Let us now consider the transfer operator $L_{F}$ associated with $F$. Being
a push forward map, the same function can be also denoted by $F^{\ast }$ we
will use this notation sometime. There is a nice characterization of the
transfer operator in our case, which makes it work quite like a one
dimensional operator. For the proof see \cite{GL}.

\begin{proposition}[Perron-Frobenius like formula]
\label{PF}Let us consider a skew product map $F$ satisfying $Sk1$ and $Sk2$.
For a given leaf $\gamma \in \mathcal{F}^{s}$, define the map $F_{\gamma
}:M\longrightarrow M$ by 
\begin{equation*}
F_{\gamma }=\pi _{y}\circ F|_{\gamma }\circ \pi _{\gamma ,y}^{-1}.
\end{equation*}%
For all $\mu \in \mathcal{L}^{1}$ and for almost all $\gamma \in \lbrack
0,1] $ it holds 
\begin{equation}
(L_{F}\mu )|_{\gamma }=\sum_{i=1}^{q}{\dfrac{\func{F}_{T_{i}^{-1}(\gamma
)}^{\ast }\mu |_{T_{i}^{-1}(\gamma )}}{|T_{i}^{^{\prime }}\circ
T_{i}^{-1}(\gamma ))|}}\ .  \label{niceformulaa}
\end{equation}%
\label{niceformulaab}
\end{proposition}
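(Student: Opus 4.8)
The plan is to verify \eqref{niceformulaa} by a direct disintegration computation: I test the measure $L_{F}\mu$ against arbitrary bounded measurable functions, rewrite both sides as integrals against families of fibre measures over $[0,1]$, and then invoke the essential uniqueness of the Rokhlin disintegration (Remark \ref{rmkv}) to identify the fibre measures. Since $\mu\mapsto \mu|_{\gamma}$ and $L_{F}$ are both linear, and the right-hand side of \eqref{niceformulaa} is linear in $\mu$, it suffices to treat a positive $\mu\in\mathcal{L}^{1}$ and extend to the signed case via $\mu=\mu^{+}-\mu^{-}$. I would first record that $L_{F}\mu\in\mathcal{AB}$, so that $(L_{F}\mu)|_{\gamma}$ is well defined: because $F$ is a skew product over $T$ we have $\pi_{x}\circ F=T\circ\pi_{x}$, hence the base marginal of $L_{F}\mu$ is $L_{T}\mu_{x}$, which by $Sk1$ and the classical one–dimensional Perron–Frobenius formula has density $\sum_{i}\phi_{\mu}(T_{i}^{-1}(\cdot))/|T_{i}'\circ T_{i}^{-1}(\cdot)|$ with respect to $m$, so it is absolutely continuous.

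For a bounded measurable $\psi:X\rightarrow\mathbb{R}$, the definition of the pushforward gives $\int\psi\,d(L_{F}\mu)=\int\psi\circ F\,d\mu$. Expanding the latter through the disintegration of $\mu$ and using that on the leaf $\gamma$ a point is $(\gamma,y)$ with $(\psi\circ F)(\gamma,y)=\psi(T\gamma,F_{\gamma}(y))$, the conditional measures $\phi_{\mu}(\gamma)\mu_{\gamma}$ collapse into the restrictions $\mu|_{\gamma}$ (measures on $M$), yielding
\[
\int\psi\,d(L_{F}\mu)=\int_{[0,1]}\left(\int_{M}\psi(T\gamma,w)\,d\big(F_{\gamma}^{\ast}\mu|_{\gamma}\big)(w)\right)dm(\gamma).
\]
I then split $[0,1]$ into the branch domains and substitute $\eta=T_{i}(\gamma)$ on each, using $Sk1$: the branches are onto, and $dm(\gamma)=dm(\eta)/|T_{i}'\circ T_{i}^{-1}(\eta)|$. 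Summing over $i$ and noting $T(T_{i}^{-1}(\eta))=\eta$ gives
\[
\int\psi\,d(L_{F}\mu)=\int_{[0,1]}\left(\int_{M}\psi(\eta,w)\,d\Big[\sum_{i=1}^{q}\frac{F_{T_{i}^{-1}(\eta)}^{\ast}\mu|_{T_{i}^{-1}(\eta)}}{|T_{i}'\circ T_{i}^{-1}(\eta)|}\Big](w)\right)dm(\eta).
\]

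Finally I compare this with the expression coming from the disintegration of $L_{F}\mu$ itself, namely $\int\psi\,d(L_{F}\mu)=\int_{[0,1]}\int_{M}\psi(\eta,w)\,d\big((L_{F}\mu)|_{\eta}\big)(w)\,dm(\eta)$. Choosing $\psi(x,y)=h(x)k(y)$ with $h\in C([0,1])$ and $k$ ranging over a countable $||\cdot||_{\infty}$–dense set of Lipschitz functions on $M$, the equality of the two integrals for all $h$ forces, for each fixed $k$, the two inner integrals against $k$ to agree for $m$–a.e.\ $\eta$; intersecting the resulting full–measure sets over the countable family of $k$ yields equality of the measures $(L_{F}\mu)|_{\eta}$ and $\sum_{i}F_{T_{i}^{-1}(\eta)}^{\ast}\mu|_{T_{i}^{-1}(\eta)}/|T_{i}'\circ T_{i}^{-1}(\eta)|$ for $m$–a.e.\ $\eta$, which is \eqref{niceformulaa}.

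I expect the main obstacle to be the measurability and integrability bookkeeping that makes the substitution and the Fubini steps legitimate: one must know that $\gamma\mapsto\mu|_{\gamma}\in(\mathcal{SB}(M),||\cdot||_{W_{1}})$ is a measurable, indeed $m$–integrable, path (this is exactly what membership in $\mathcal{L}^{1}$ provides via \eqref{l1normsm}), and that $\gamma\mapsto F_{\gamma}^{\ast}\mu|_{\gamma}$ is measurable. Here $Sk2$ enters: the uniform Lipschitz bound \eqref{contracting1} makes each $F_{\gamma}=G(\gamma,\cdot)$ Lipschitz and the dependence on $\gamma$ measurable, so that $F_{\gamma}^{\ast}$ acts continuously on $(\mathcal{SB}(M),||\cdot||_{W_{1}})$ and the integrand above is well defined and measurable in $\eta$. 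Everything else is a routine change of variables.
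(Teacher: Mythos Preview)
Your argument is correct and is exactly the standard route to such a formula: disintegrate, push through $F$ fibrewise, change variables on the base via the onto branches, and conclude by uniqueness of the disintegration tested against products $h(x)k(y)$. The paper does not give its own proof of this proposition but refers to \cite{GL}; the argument there follows the same computation you outline, so there is nothing to compare beyond presentation.

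Two small remarks. First, your reduction to positive $\mu$ is fine, but it relies on the fact that if $\mu=\nu_{1}-\nu_{2}$ with $\nu_{1},\nu_{2}\geq 0$ in $\mathcal{AB}$, then $\mu|_{\gamma}=\nu_{1}|_{\gamma}-\nu_{2}|_{\gamma}$ for $m$--a.e.\ $\gamma$; this is not literally the definition (which uses the Jordan decomposition $\mu^{\pm}$), but it follows from the same uniqueness-of-disintegration argument you use at the end, so you may want to say one word about it. Second, the countable dense family of Lipschitz $k$ suffices because on the compact $M$ Lipschitz functions are uniformly dense in $C(M)$, hence separate finite signed Borel measures; this is implicit in your use of $||\cdot||_{W_{1}}$ but worth making explicit. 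Neither point is a gap.
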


We recall some results showing that the transfer operator associated to a
Lipschitz function is also Lipschitz with the same constant, for the $"1"$
distance, and moreover, that the transfer operator of a map satisfying $%
Sk1,...,Sk3$ is also Lipschitz with the same constant for the $||~||_{"1"}$
norm. In particular, if $\alpha \leq 1$ the transfer operator is a weak
contraction, like it happen for the $L^{1}$ norm on the one dimensional case
(for the proof and more details see \cite{GL}).

\begin{lemma}
\label{unamis} If \ $G$ $:Y\rightarrow Y$, where $Y$ is a metric space is $%
\alpha $-Lipschitz, for every Borel measure with sign $\mu $ it holds%
\begin{equation*}
||L_{G}\mu ||_{W_{1}}\leq \alpha ||\mu ||_{W_{1}}.
\end{equation*}%
If $\mu \in \mathcal{L}^{1}$and $F:[0,1]\times M\rightarrow \lbrack
0,1]\times M$ satisfies $Sk1,...,Sk3$ then 
\begin{equation}
||L_{F}\mu ||_{"1"}\leq \alpha ||\mu ||_{"1"}.
\end{equation}
\end{lemma}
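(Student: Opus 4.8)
The plan is to establish the two inequalities separately, obtaining the second one from the first by working leaf by leaf and using the Perron--Frobenius-like formula of Proposition \ref{PF}.

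For the first inequality I would argue by duality, using the test-function characterization of $||\cdot||_{W_1}$. By definition of the pushforward, $\int g\,d(L_G\mu)=\int (g\circ G)\,d\mu$ for every bounded Borel $g:Y\to\mathbb{R}$. If $g$ is $1$-Lipschitz with $||g||_\infty\le 1$, then $g\circ G$ is $\alpha$-Lipschitz (composition with an $\alpha$-Lipschitz map multiplies the Lipschitz seminorm by at most $\alpha$) while still $||g\circ G||_\infty\le 1$; hence $\alpha^{-1}(g\circ G)$ is an admissible test function and $|\int (g\circ G)\,d\mu|\le\alpha\,||\mu||_{W_1}$. Taking the supremum over all admissible $g$ gives $||L_G\mu||_{W_1}\le\alpha\,||\mu||_{W_1}$. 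This rescaling by $\alpha^{-1}$ is exactly the point where the $W_1$-norm behaves like the $L^1$ norm of a real function under a change of variable.

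For the second inequality I would start from formula (\ref{niceformulaa}): for a.e.\ leaf $\gamma$,
\[
(L_F\mu)|_\gamma=\sum_{i=1}^q \frac{(F_{T_i^{-1}(\gamma)})^{\ast}\,\mu|_{T_i^{-1}(\gamma)}}{|T_i'\circ T_i^{-1}(\gamma)|}.
\]
Applying the triangle inequality and the positive homogeneity of $||\cdot||_{W_1}$, and then the first part of the lemma to each fiber map $F_{T_i^{-1}(\gamma)}:M\to M$, which is $\alpha$-Lipschitz by $Sk2$, one obtains
\[
||(L_F\mu)|_\gamma||_{W_1}\le\alpha\sum_{i=1}^q\frac{||\mu|_{T_i^{-1}(\gamma)}||_{W_1}}{|T_i'\circ T_i^{-1}(\gamma)|}.
\]
I would then integrate this over $\gamma\in[0,1]$ against $m$ and, in the $i$-th summand, perform the change of variable $\gamma=T_i(s)$, so that $dm(\gamma)=|T_i'(s)|\,ds$ and the Jacobian cancels the factor $|T_i'\circ T_i^{-1}(\gamma)|^{-1}$; each term becomes $\int_{\mathrm{dom}(T_i)}||\mu|_s||_{W_1}\,ds$. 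Since by $Sk1$ the branches $T_i$ are onto and their domains partition $[0,1]$ up to a null set, summing over $i$ gives $\int_{[0,1]}||\mu|_s||_{W_1}\,dm(s)=||\mu||_{"1"}$, hence $||L_F\mu||_{"1"}\le\alpha\,||\mu||_{"1"}$.

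The duality computation is routine; the main obstacle I anticipate is the measure-theoretic bookkeeping hidden in the second part. One must know that the disintegration of $L_F\mu$ along $\mathcal{F}^c$ is genuinely governed by (\ref{niceformulaa}) for every $\mu\in\mathcal{L}^1$ (not only for absolutely continuous or smooth $\mu$), that $\gamma\mapsto||(L_F\mu)|_\gamma||_{W_1}$ is measurable so that $||\cdot||_{"1"}$ is well defined on $L_F\mu$, and that the triangle inequality is applied correctly to the restriction of a \emph{signed} measure, i.e.\ to $\mu|_\gamma=\mu^+|_\gamma-\mu^-|_\gamma$ and to the positive and negative parts of each pushforward $(F_{T_i^{-1}(\gamma)})^\ast\mu^\pm|_{T_i^{-1}(\gamma)}$. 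These facts, together with the statement of the lemma, are established in \cite{GL}; the skew-product hypotheses $Sk1$--$Sk2$ are precisely what let the transfer operator split into a one-dimensional base part (handled by the change of variable above) and a fiber part (handled by the leafwise $W_1$ contraction).
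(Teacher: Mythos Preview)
The paper does not prove Lemma \ref{unamis} in the text; it simply refers to \cite{GL} for the argument. Your approach---duality for the first inequality, then a leafwise application via the Perron--Frobenius formula (\ref{niceformulaa}) followed by the change of variable $\gamma=T_i(s)$---is the natural one and is essentially what that reference does.

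There is, however, a genuine gap in your first step when $\alpha<1$. You claim that $\alpha^{-1}(g\circ G)$ is an admissible test function, but although its Lipschitz constant is at most $1$, its sup norm is $\alpha^{-1}\|g\circ G\|_\infty$, which can exceed $1$. In fact the inequality as literally stated fails in that regime: on $Y=[0,1]$ take $G(y)=y/2$ (so $\alpha=1/2$) and $\mu=\delta_0$; then $L_G\mu=\delta_0$ and, testing against $g\equiv 1$, one finds $\|L_G\mu\|_{W_1}=\|\mu\|_{W_1}=1>\tfrac12=\alpha\|\mu\|_{W_1}$. What your argument correctly yields is
\[
\|L_G\mu\|_{W_1}\le\max(\alpha,1)\,\|\mu\|_{W_1}:
\]
for $\alpha\ge1$ your rescaling works, while for $\alpha\le1$ the function $g\circ G$ is itself already admissible. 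This weaker bound is exactly what the paper needs: assumption $Sk2$ only imposes $\lambda^\xi\alpha<1$, so $\alpha$ may well be $\ge1$ (and equals $1$ in the toral extensions of Section \ref{Sec2}), and the sentence preceding the lemma only claims that ``if $\alpha\le1$ the transfer operator is a weak contraction''. The second half of your argument is correct as written.
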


\subsection{The strong norm}

We consider a norm which is stronger than the $\mathcal{L}^{1}$ norm. The
idea is to consider a disintegrated measure as a path of measures on the
preserved foliation and define a kind of bounded variation regularity for
this path, in a way similar to what was done in \cite{Gk} for real functions.

For this strong space we will prove a regularization inequality, similar to
the Lasota Yorke ones. We will use this inequality to prove the regularity
of the invariant measure of the family of skew products we consider.

Let us consider $\mu \in \mathcal{L}^{1}.$ Let us define 
\begin{equation*}
osc(\mu ,x,r)=\limfunc{esssup}_{\gamma _{2},\gamma _{1}\in B(x,r)}(W_{1}(\mu
|_{\gamma _{1}},\mu |_{\gamma _{2}}))
\end{equation*}%
and

\begin{equation*}
var_{p}(\mu ,r)=\int_{[0,1]}r^{-p}osc(\mu ,x,r)~dx.
\end{equation*}%
Now \ let us \ choose $A>0$ and consider $var_{p}(\mu ):=\sup_{r\leq
A}var_{p}(\mu ,r)$. Finally we define $p-BV$ norm as:

\begin{equation*}
||\mu ||_{p-BV}=||\mu ||_{"1"}+var_{p}(\mu ).
\end{equation*}%
Let us consider $1\geq p\geq 0$ and the following space of measures%
\begin{equation*}
p-BV=\left\{ \mu \in \mathcal{L}^{1},||\mu ||_{p-BV}<\infty \right\} .
\end{equation*}

This will play the role of the strong space in the present case.\ 

\begin{remark}
\label{Ap}If $\mu \in p-BV$, then it follows that%
\begin{equation*}
\underset{\gamma }{ess\sup }||\mu |_{\gamma }||_{W_{1}}\leq A^{p-1}||\mu
||_{p-BV}.
\end{equation*}%
See \cite{AGP}, Lemma 2 for a proof in the case of real functions which also
works in our case.
\end{remark}

We now prove a sort of Helly selection principle for sequences of \emph{%
positive }measures with bounded variation. This principle will be used,
together with a regularization inequality, proved in next section, to get
information on the variation of invariant measures. First we need a
preliminary lemma:

\begin{lemma}
\label{spp}If $\mu _{n}$ is a sequence of positive measures such that for
each $n$, $||\mu _{n}||_{"1"}\leq C$, $var_{p}(\mu _{n})\leq M$, and $\mu
_{n}|_{\gamma }\rightarrow \mu |_{\gamma }$ for a.e. $\gamma ,$ in the
Wasserstein distance, then%
\begin{equation*}
||\mu ||_{"1"}\leq C,var_{p}(\mu )\leq M.
\end{equation*}
\end{lemma}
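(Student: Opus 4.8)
The plan is to prove the two assertions about the limit measure $\mu$ separately, both by testing against a single Lipschitz function on a single leaf and then integrating, exploiting that the hypotheses are phrased leafwise and that all the measures involved are positive. First I would establish $||\mu||_{"1"}\leq C$. For a.e.\ $\gamma$ we have $\mu_n|_\gamma \to \mu|_\gamma$ in the Wasserstein distance $W_1$, hence in particular $||\mu_n|_\gamma||_{W_1}\to ||\mu|_\gamma||_{W_1}$ (the map $\nu \mapsto W_1(0,\nu)$ is $1$-Lipschitz for $W_1$, so convergence in $W_1$ forces convergence of the norms). Therefore the functions $\gamma \mapsto ||\mu_n|_\gamma||_{W_1}$ converge a.e.\ to $\gamma \mapsto ||\mu|_\gamma||_{W_1}$, each is bounded in $L^1(dm)$ by $C$, and by Fatou's lemma $\int_{[0,1]} ||\mu|_\gamma||_{W_1}\, dm(\gamma) \leq \liminf_n \int_{[0,1]} ||\mu_n|_\gamma||_{W_1}\, dm(\gamma) \leq C$, which is exactly $||\mu||_{"1"}\leq C$.

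Next I would treat $var_p(\mu)\leq M$. Fix $r\leq A$. For $W_1$-convergence we again use that, for fixed $x$ and fixed $\gamma_1,\gamma_2\in B(x,r)$, the quantity $W_1(\mu_n|_{\gamma_1},\mu_n|_{\gamma_2})$ converges to $W_1(\mu|_{\gamma_1},\mu|_{\gamma_2})$ (triangle inequality for $W_1$ plus the leafwise convergence, valid for a.e.\ pair). The delicate point is the essential supremum defining $osc(\mu,x,r)=\limfunc{esssup}_{\gamma_1,\gamma_2\in B(x,r)} W_1(\mu|_{\gamma_1},\mu|_{\gamma_2})$: an essential supremum does not in general pass to the limit under pointwise convergence of the integrands. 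The way around this is to show $osc(\mu,x,r)\leq \liminf_n osc(\mu_n,x,r)$ for a.e.\ $x$. To see this, note that for a.e.\ pair $(\gamma_1,\gamma_2)\in B(x,r)^2$ we have $W_1(\mu|_{\gamma_1},\mu|_{\gamma_2}) = \lim_n W_1(\mu_n|_{\gamma_1},\mu_n|_{\gamma_2}) \leq \liminf_n osc(\mu_n,x,r)$; taking the essential supremum over such pairs on the left gives the claimed inequality, since the right-hand side is a constant (in the pair). Then by Fatou again, $var_p(\mu,r) = \int_{[0,1]} r^{-p}\, osc(\mu,x,r)\, dx \leq \int_{[0,1]} r^{-p}\, \liminf_n osc(\mu_n,x,r)\, dx \leq \liminf_n var_p(\mu_n,r) \leq M$. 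Taking the supremum over $r\leq A$ yields $var_p(\mu)\leq M$.

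The main obstacle is the handling of the essential supremum in $osc$, since it is not continuous under a.e.\ convergence of the family of measures; the resolution above depends on the fact that the bound $\liminf_n osc(\mu_n,x,r)$ is independent of the pair $(\gamma_1,\gamma_2)$, so that bounding $W_1(\mu|_{\gamma_1},\mu|_{\gamma_2})$ pairwise for a.e.\ pair is enough to bound the essential supremum. One should also be slightly careful about null sets: the leafwise convergence holds off a single $m$-null set $N\subset[0,1]$, and then for every $x$ the set of pairs in $B(x,r)^2\setminus (N\times\mathbb{R}\cup\mathbb{R}\times N)$ has full product measure in $B(x,r)^2$, which is what is needed for the essential supremum argument; positivity of the $\mu_n$ is used only implicitly, to guarantee the restrictions $\mu_n|_\gamma$ and $\mu|_\gamma$ are positive measures so that $W_1$ and $||\cdot||_{W_1}$ behave as genuine (semi)distances and norms on them. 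The same Fatou-type argument, essentially verbatim, appears for real-valued BV functions in the classical Helly principle, so the only new content is the bookkeeping with $W_1$ on leaves.
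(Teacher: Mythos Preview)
Your proof is correct and follows essentially the same route as the paper: establish $osc(\mu,x,r)\leq \liminf_n osc(\mu_n,x,r)$ from the leafwise $W_1$-convergence, then apply Fatou to pass to $var_p$. The only difference is in the first part, where the paper invokes the uniform bound $\|\mu_n|_\gamma\|_{W_1}\leq A^{p-1}(C+M)$ from Remark~\ref{Ap} and then dominated convergence, whereas your use of Fatou avoids this extra ingredient; and in the second part the paper phrases the same oscillation inequality via the existence of positive-measure sets $A_1,A_2$ realizing any level below $osc(\mu,x,r)$, which is equivalent to your pairwise bound followed by essential supremum.
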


\begin{proof}
Let us consider the $||~||_{"1"}$ norm. Since by Remark \ref{Ap} it holds $%
||\mu _{n}|_{\gamma }||_{W}\leq A^{p-1}(C+M)$ $\forall \gamma ,$ by the
dominated convergence theorem, $||\mu ||_{"1"}\leq C$. Let us now consider
the oscillation. We have that $\underset{n\rightarrow \infty }{\liminf }%
~osc(\mu _{n},x,r)\geq osc(\mu ,x,r)$ for all $x,r$. Indeed, consider a
small $\epsilon \geq 0$. Because of the definition of $osc()$, we have that
for all $l\geq osc(\mu ,x,r)-\epsilon $, there are positive measure sets $%
A_{1}$ and $A_{2}$ such that $W_{1}(\mu |_{\gamma _{1}},\mu |_{\gamma
_{2}})\geq l$ for all $\gamma _{1}\in A_{1},\gamma _{2}\in A_{2}$. Since $%
\mu _{n}|_{\gamma }\rightarrow \mu |_{\gamma }$ for a.e. $\gamma $, if $n$
is big enough there must be sets $A_{1}^{n}$ and $A_{2}^{n}$ of positive
measure, such that $W_{1}(\mu _{n}|_{\gamma _{1}},\mu _{n}|_{\gamma
_{2}})\geq l$ for all $\gamma _{1}\in A_{1}^{n},\gamma _{2}\in A_{2}^{n}$,
by this $osc(\mu _{n},x,r)\geq l$, then for all $x,r,$ $\underset{%
n\rightarrow \infty }{\lim \inf }osc(\mu _{n},x,r)\geq osc(\mu ,x,r)$. By
Fatou's Lemma, $\underset{n\rightarrow \infty }{\lim \inf }var_{p}(\mu
_{n},r)=\underset{n\rightarrow \infty }{\lim \inf }\int_{I}r^{-p}osc(\mu
_{n},x,r)~dx\geq var_{p}(\mu ,r)$. From which the statement follows directly.
\end{proof}

\begin{theorem}[Helly-selection-like theorem]
\label{hL}Let $\mu _{n}$ be a sequence of probability measures on $X$ such\
that $\ ||\mu _{n}||_{p-BV}\leq M$ for some $M\geq 0$. Then there is $\mu $
with $||\mu ||_{p-BV}\leq M$ and subsequence $\mu _{n_{k}}$ such that 
\begin{equation*}
||\mu _{n_{k}}-\mu ||_{"1"}\rightarrow 0.
\end{equation*}
\end{theorem}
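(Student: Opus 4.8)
The plan is to mimic the classical proof of the Helly selection theorem, replacing pointwise monotonicity arguments by the compactness of the space of probability measures on the compact manifold $M$ in the Wasserstein-Kantorovich distance $W_1$, and then upgrading leafwise $W_1$-convergence to $||\cdot||_{"1"}$-convergence via dominated convergence, exactly as in Lemma \ref{spp}.

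\medskip

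First I would fix a countable dense set $Q=\{\gamma^{(1)},\gamma^{(2)},\dots\}\subseteq[0,1]$ (say the rationals). For each fixed $\gamma$ the restrictions $\mu_n|_\gamma$ are positive measures on $M$ of uniformly bounded mass: by Remark \ref{Ap}, $||\mu_n|_\gamma||_{W_1}\le A^{p-1}||\mu_n||_{p-BV}\le A^{p-1}M$ for a.e.\ $\gamma$, and in particular $\mu_n|_\gamma(M)$ is uniformly bounded. Since $M$ is compact, such measures lie in a relatively compact subset of $(\mathcal{SB}(M),W_1)$ (the $W_1$ distance here is the one from Definition \ref{wasserstein}, a bounded-Lipschitz dual norm, which metrizes weak-$*$ convergence on mass-bounded sets over a compact space). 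Hence by a diagonal argument over the countable set $Q$ I can extract a subsequence $\mu_{n_k}$ such that $\mu_{n_k}|_{\gamma^{(j)}}$ converges in $W_1$ to some positive measure $\nu_j$ on $M$, for every $j$. The next step is to show this convergence propagates to a.e.\ $\gamma\in[0,1]$. For this I use the $var_p$ bound: since $var_p(\mu_{n_k})\le M$ uniformly, the maps $\gamma\mapsto \mu_{n_k}|_\gamma$ are "uniformly of bounded $p$-variation", so the limit along $Q$ extends to a limit measure $\mu|_\gamma$ for a.e.\ $\gamma$, with $\mu_{n_k}|_\gamma\to\mu|_\gamma$ in $W_1$ a.e. Concretely: for $\gamma,\gamma'$ both in a suitable full-measure set, $W_1(\mu_{n_k}|_\gamma,\mu_{n_k}|_{\gamma'})$ is controlled in an averaged/$L^1$ sense by the oscillation, so a Cauchy-type estimate together with the already-established convergence on $Q$ forces convergence a.e.; one may need to pass to a further subsequence and invoke that a monotone-in-$r$ control of $\int osc(\mu_{n_k},x,r)\,dx$ gives, for a.e.\ $x$, small oscillation of the limit path, hence continuity/regularity enough to identify the limit independently of the approximating rational leaves.

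\medskip

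Once I have a limit object $\mu$ defined a.e.\ leafwise with $\mu_{n_k}|_\gamma\to\mu|_\gamma$ in $W_1$ for a.e.\ $\gamma$, I apply Lemma \ref{spp} verbatim (with $C$ any uniform bound on $||\mu_{n_k}||_{"1"}\le M$ and the uniform $var_p$ bound $M$) to conclude $||\mu||_{"1"}\le M$ and $var_p(\mu)\le M$, hence $\mu\in p\text{-}BV$ with $||\mu||_{p-BV}\le M$ — note one should check this gives $||\mu||_{p-BV}\le 2M$ a priori and then observe the argument actually yields the sharp $M$ since $||\mu_{n_k}||_{"1"}$ and $var_p(\mu_{n_k})$ are each $\le M$ is not automatic; more carefully, $||\mu_{n_k}||_{"1"}+var_p(\mu_{n_k})\le M$, and Lemma \ref{spp} applied with the individual bounds $C_k=||\mu_{n_k}||_{"1"}$, passing to a further subsequence so that $C_k\to C_\infty$ and $var_p(\mu_{n_k})$ converges, gives $||\mu||_{"1"}\le C_\infty$ and $var_p(\mu)\le \liminf var_p(\mu_{n_k})$, summing to $\le M$. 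Finally, to get the strong convergence $||\mu_{n_k}-\mu||_{"1"}\to 0$: the signed measures $\mu_{n_k}-\mu$ satisfy, for a.e.\ $\gamma$, $||(\mu_{n_k}-\mu)|_\gamma||_{W_1}\le W_1(\mu_{n_k}|_\gamma,\mu|_\gamma)\to 0$, and this leafwise integrand is dominated by the integrable (in fact bounded) function $\gamma\mapsto ||\mu_{n_k}|_\gamma||_{W_1}+||\mu|_\gamma||_{W_1}\le 2A^{p-1}M$; so the dominated convergence theorem yields $\int_{[0,1]}||(\mu_{n_k}-\mu)|_\gamma||_{W_1}\,dm(\gamma)=||\mu_{n_k}-\mu||_{"1"}\to 0$.

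\medskip

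The main obstacle I expect is the middle step: upgrading convergence on the countable dense set of leaves to convergence at a.e.\ leaf. For real-valued functions of bounded variation this is the monotonicity trick (a BV function is a difference of monotone functions, which converge except at countably many points); here the "values" are probability measures on $M$ and there is no order. The right substitute is the $var_p$ bound, which says precisely that the path $\gamma\mapsto\mu_n|_\gamma$ has controlled oscillation at all scales uniformly in $n$; one must extract from $\sup_{r\le A}\int r^{-p}osc(\mu_n,x,r)\,dx\le M$ an equicontinuity-type statement strong enough to pin down the limit path off a null set, and to ensure the $W_1$-limit along rationals is independent of which rationals approach $\gamma$. I would handle this by first showing, for a.e.\ $x$, $\liminf_{r\to0} osc(\mu,x,r)=0$ (Fatou applied to $r^{-p}osc$ forces $osc(\mu_n,x,r)$ to be small on a large set, then pass to the limit as in Lemma \ref{spp}), which gives that $\mu|_\gamma$ can be consistently defined at every "$p$-variation Lebesgue point" as the $W_1$-limit of $\mu|_{\gamma^{(j)}}$ over $\gamma^{(j)}\in Q$, $\gamma^{(j)}\to\gamma$, and simultaneously $\mu_{n_k}|_\gamma\to\mu|_\gamma$ there by a three-epsilon argument combining the oscillation control with the diagonal convergence on $Q$. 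Everything else is routine once this is in place.
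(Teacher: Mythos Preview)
Your approach discretizes the base $[0,1]$ (via a countable dense set of leaves), whereas the paper discretizes the fiber $M$: it fixes a finite-dimensional projection $\pi_{y,\delta}:PM(M)\to U_\delta$ with $\|\pi_{y,\delta}(\nu)-\nu\|_{W_1}\le C\delta$, so that $\gamma\mapsto\pi_\delta(\mu_n)|_\gamma$ is a $U_\delta$-valued path of bounded $p$-variation to which the \emph{classical} Helly theorem applies directly, yielding a limit $\mu_\delta$ with convergence in $\mathcal L^1$ and a.e. A diagonal argument over $\delta_i\to 0$ then shows the $\mu_{\delta_i}$ are uniformly (in $\gamma$) Cauchy, since $\|\pi_{\delta_i}(\mu_{l_k})|_\gamma-\pi_{\delta_j}(\mu_{l_k})|_\gamma\|_{W_1}\le C(\delta_i+\delta_j)$; this produces the final $\mu$ together with a.e.\ leafwise convergence, and Lemma~\ref{spp} finishes.

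Your middle step---upgrading convergence on a dense set of leaves to a.e.\ leafwise convergence---has a genuine gap. The three-epsilon argument you sketch needs $W_1(\mu_{n_k}|_\gamma,\mu_{n_k}|_{\gamma^{(j)}})$ small \emph{uniformly in $k$} once $\gamma^{(j)}\in Q$ is close to $\gamma$. But the $var_p$ bound is an $L^1$-integrated oscillation control: $\int r^{-p}osc(\mu_{n_k},x,r)\,dx\le M$ says nothing about $osc(\mu_{n_k},\gamma,r)$ at a \emph{fixed} $\gamma$ uniformly in $k$. For each $k$ separately one has $osc(\mu_{n_k},\gamma,r)\to 0$ as $r\to 0$ for a.e.\ $\gamma$, but the rate and the null set depend on $k$, and intersecting over $k$ gives no uniformity. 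In the real-valued Helly theorem this step is rescued by monotonicity (a BV function is a difference of monotone functions, and monotone plus convergence on a dense set forces convergence off the countable jump set), which has no analogue for $PM(M)$-valued paths. There is also a preliminary issue you pass over: the disintegration $\gamma\mapsto\mu_n|_\gamma$ is only defined modulo null sets, so ``evaluate at the rationals'' already presupposes a choice of representative compatible with the oscillation bounds. The paper's fiber-discretization sidesteps both problems: once the target is finite-dimensional, classical Helly handles the dense-to-a.e.\ upgrade internally, and the approximation error $C\delta$ is uniform in $\gamma$, so gluing the scales $\delta_i\to 0$ is a uniform Cauchy estimate rather than a pointwise one.
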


\begin{proof}
Let us discretize in the vertical direction. Let us consider a continuous
projection of the space of probability measures on $M$ on a finite
dimensional space $\pi _{y,\delta }:PM(M)\rightarrow U_{\delta }$ \ ($%
U_{\delta }$ is finite dimensional). Suppose $\pi _{y,\delta }$ is such that 
$||\pi _{y,\delta }(\nu )-\nu ||_{W_{1}}\leq C\delta ,$ $\forall \nu \in
PM(M)$ (such projection can be constructed discretizing the space by a
partition of unity made of Lipschitz functions with support on sets whose
diameter is smaller than $\delta $, see \cite{GMN} for example). Let us
consider the natural extension of this projection to the whole $\mathcal{L}%
^{1}(X)$ space $\pi _{\delta }$:$\mathcal{L}^{1}(X)\rightarrow \mathcal{L}%
^{1}(X)$, defined by $\pi _{\delta }(\mu )|_{\gamma }=\pi _{y,\delta }(\mu
|_{\gamma })$.

Let us consider the sequence $\pi _{\delta }(\mu _{n})$. We have $||\pi
_{\delta }(\mu _{n})||_{p-BV}\leq K_{\delta }M$ where $K_{\delta }$ is the
modulus of continuity of $\pi _{\delta }$. Indeed%
\begin{equation*}
\limfunc{esssup}_{\gamma _{2},\gamma _{1}\in B(x,r)}(W_{1}(\pi _{\delta
}(\mu )|_{\gamma _{1}},\pi _{\delta }(\mu )|_{\gamma _{1}}))\leq K_{\delta }%
\limfunc{esssup}_{\gamma _{2},\gamma _{1}\in B(x,r)}(W_{1}(\mu |_{\gamma
_{1}},\mu |_{\gamma _{1}})).
\end{equation*}

Since after projecting we now are in a space of functions with values in a
finite dimensional space, to the sequence $\pi _{\delta }(\mu _{n})$ we can
apply the classical Helly selection theorem and get that there is a limit
measure $\mu _{\delta }$ and a sub sequence $n_{k}$ such that $\pi _{\delta
}(\mu _{n_{k}})\rightarrow \mu _{\delta }$ in $\mathcal{L}^{1}$ and $\pi
_{\delta }(\mu _{n_{k}})|_{\gamma }\rightarrow \mu _{\delta }|_{\gamma }$
almost everywhere. Let us consider a sequence $\delta _{i}\rightarrow 0$ and
select inductively at every step from the previous selected subsequence $\mu
_{l}$ such that $\pi _{\delta _{i-1}}(\mu _{l})\rightarrow \mu _{\delta
_{i-1}}$ a further subsequence $\mu _{l_{k}}$, such that $\pi _{\delta
_{i}}(\mu _{l_{k}})\rightarrow \mu _{\delta _{i}}$ in $\mathcal{L}^{1}$ and
almost everywhere. Since $\forall \gamma $ and $m\leq i,$ $||\pi _{\delta
_{m}}(\mu _{l_{k}}|_{\gamma })-\mu _{l_{k}}|_{\gamma }||_{W_{1}}\leq C\delta
_{m}$, it holds that for different $\delta _{m}$, $\delta _{j}\geq \delta
_{i}$ 
\begin{equation*}
||\pi _{\delta _{i}}(\mu _{l_{k}}|_{\gamma })-\pi _{\delta _{j}}(\mu
_{l_{k}}|_{\gamma })||_{W_{1}}\leq C(\delta _{i}+\delta _{j})
\end{equation*}%
and then $\forall \gamma $%
\begin{equation*}
||\mu _{\delta _{m}}|_{\gamma }-\mu _{\delta _{j}}|_{\gamma }||_{W_{1}}\leq
C(\delta _{m}+\delta _{j}+\delta _{i})
\end{equation*}%
uniformly in $\gamma .$ Since $\mu _{\delta _{n}}$ are positive measures,
this shows that there is a $\mu $ such that \ $\mu _{\delta _{i}}\rightarrow
\mu $ in $\mathcal{L}^{1}$ and $\mu _{\delta _{i}}|_{\gamma }\rightarrow \mu
|_{\gamma }$ almost everywhere. \ This shows that a further subsequence $\mu
_{n_{_{j}}}$ can be selected in a way that $\pi _{\delta _{i}}(\mu
_{n_{_{j}}})\underset{j\rightarrow \infty }{\rightarrow }\mu _{\delta _{i}}$
for all $i$, and $\mu _{n_{_{j}}}\underset{j\rightarrow \infty }{\rightarrow 
}\mu $ in $\mathcal{L}^{1}$and almost everywhere. Applying Lemma \ref{spp}
we get $||\mu ||_{p-BV}\leq M$.
\end{proof}

\section{A regularization inequality \label{LY}}

In this section we prove an inequality, showing that iterates of a bounded
variation positive measure are of uniform bounded variation. This will play
the role of a Lasota Yorke inequality. A consequence will be a bound on the
variation of invariant measures in $\mathcal{L}^{1}$. \ This will be used
when applying Theorem \ref{gen} to provide the estimate \ needed at Item 2
of. The following regularization inequality can be proved.

\begin{proposition}
\label{LYYY}Let $F$ be a skew product map satisfying assumptions $%
Sk1,...,Sk3 $ and let us suppose that $\mu $ is a positive measure. Let $%
p\leq \xi $ (the H\"{o}lder exponent as in $Sk1$). It holds%
\begin{equation*}
var_{p}(L_{F}\mu ){\leq \lambda }^{p}{\alpha ~var}_{p}{(\mu )+}({\hat{H}}%
||\mu _{x}||_{\infty }+3q\alpha C_{h}A^{\xi -p}||\mu _{x}||_{\infty }).
\end{equation*}%
We recall that here $\mu _{x}$ is the marginal of the disintegration of $\mu 
$ (see Equation \ref{eee6}) and $||\mu _{x}||_{\infty }$ is the supremum
norm for its density.
\end{proposition}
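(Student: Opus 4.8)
The plan is to use the Perron--Frobenius-like formula (Proposition \ref{PF}) to express $(L_F\mu)|_\gamma$ as a sum over inverse branches of the base map, and then estimate the oscillation $osc(L_F\mu, x, r)$ directly by comparing the leaf-restrictions over two nearby leaves $\gamma_1,\gamma_2 \in B(x,r)$. For each branch index $i$, formula (\ref{niceformulaa}) writes the contribution as $\frac{F^*_{T_i^{-1}(\gamma)}\,\mu|_{T_i^{-1}(\gamma)}}{|T_i'\circ T_i^{-1}(\gamma)|}$. When we form the difference $W_1\big((L_F\mu)|_{\gamma_1}, (L_F\mu)|_{\gamma_2}\big)$, we split each branch's contribution to the difference into three pieces by adding and subtracting intermediate terms: (i) a piece where only the weight $\frac{1}{|T_i'\circ T_i^{-1}(\gamma)|}$ changes (controlled using the $C^{1+\xi}$ branch assumption $Sk1$, giving a factor $C_h d(\gamma_1,\gamma_2)^\xi$); (ii) a piece where only the pushforward map $F_{T_i^{-1}(\gamma)}$ changes, keeping the measure fixed (controlled by $Sk2$: the fiber maps are $\alpha$-Lipschitz and vary with the base point, so $W_1$ of the difference is bounded using the contraction factor $\alpha$ together with how much $F$ moves, i.e. using $Sk3$ / the $\hat H$ term); (iii) the main piece where only the measure $\mu|_{T_i^{-1}(\gamma)}$ changes, which is $\frac{\alpha}{|T_i'\circ T_i^{-1}(\gamma)|}\,W_1\big(\mu|_{T_i^{-1}(\gamma_1)},\mu|_{T_i^{-1}(\gamma_2)}\big)$ by Lemma \ref{unamis}.

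Next I would integrate $r^{-p} osc(L_F\mu,x,r)$ over $x\in[0,1]$ and take the supremum over $r\le A$. The key point for the main term (iii) is a change of variables: pulling back by the branch $T_i$, the set $B(x,r)$ has preimage of diameter at most $\lambda r$ (by $\frac1\lambda$-expansion), the Jacobian factor $\frac{1}{|T_i'\circ T_i^{-1}(\gamma)|}$ is exactly what is needed to turn $\int dx$ into $\int d\gamma'$ on the preimage, and the factor $r^{-p}$ becomes $(\lambda r)^{-p}\lambda^p$. Summing over the $q$ branches and using that the preimages tile $[0,1]$, the main term contributes at most $\lambda^p \alpha\, var_p(\mu)$ — this is the contracting part of the Lasota--Yorke-type inequality. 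For terms (i) and (ii), the factor $d(\gamma_1,\gamma_2)^\xi \le (2r)^\xi$ converts $r^{-p}$ into $r^{\xi-p}\le A^{\xi-p}$ (since $p\le\xi$), and integrating the remaining quantities against $dx$ produces the $\hat H\|\mu_x\|_\infty$ term (directly from the definition of $\hat H$ in $Sk3$, after recognizing that $\mu|_\gamma$ integrated in $\gamma$ against the oscillation of $G$ is bounded by $\hat H$ times $\|\mu_x\|_\infty$) and the $3q\alpha C_h A^{\xi-p}\|\mu_x\|_\infty$ term (from the $q$ branches, the Lipschitz-in-fiber constant $\alpha$, and the branch-regularity constant $C_h$; the factor $3$ absorbing the three sub-estimates).

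The main obstacle I expect is bookkeeping the term (ii) correctly: estimating $W_1\big(F^*_{\gamma_1'}\nu, F^*_{\gamma_2'}\nu\big)$ for a fixed measure $\nu=\mu|_{\gamma'}$ when the fiber maps $F_{\gamma_1'}$ and $F_{\gamma_2'}$ differ. Here one uses that for any $1$-Lipschitz, sup-norm-$\le 1$ test function $g$, $|\int g\circ F_{\gamma_1'}\,d\nu - \int g\circ F_{\gamma_2'}\,d\nu| \le \sup_y |G(\gamma_1',y)-G(\gamma_2',y)|\cdot \nu(M)$, and then $\nu(M)=\phi_\mu(\gamma')$ integrates against $dm$ to give the $\|\mu_x\|_\infty$ dependence, while $\sup_y|G(\gamma_1',y)-G(\gamma_2',y)|$ over $\gamma_1',\gamma_2'$ in a ball is exactly what $Sk3$ controls after the change of variables. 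Getting the constants to line up precisely with the statement — in particular making sure the Jacobian-change-of-variables is applied consistently across all three pieces and that the $\|\mu_x\|_\infty$ bound (rather than a cruder bound) is extracted — is the delicate part; the rest is routine once the three-piece decomposition is set up.
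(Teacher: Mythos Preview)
Your proposal is correct and follows essentially the same route as the paper's proof: the Perron--Frobenius formula (\ref{niceformulaa}), the three-way split of the difference $(L_F\mu)|_{\gamma_1}-(L_F\mu)|_{\gamma_2}$ into a weight-change piece (controlled by $C_h$ and yielding the $3q\alpha C_h A^{\xi-p}\|\mu_x\|_\infty$ term), a map-change piece (controlled by $Sk3$ and giving $\hat H\|\mu_x\|_\infty$), and a measure-change piece (contracted by $\lambda^p\alpha$ after change of variables), matches the paper's decomposition into $I_a,I_b,I_c$. The only cosmetic difference is the order of the splits: the paper first separates the weight change and normalizes $\frac{1}{|g_i(\gamma_1)|}$ to $\frac{1}{|g_i(x)|}$ (this normalization step is what produces the factor $3$, via bounding a residual difference by twice the sup), and only then splits the remaining term into $I_a$ and $I_b$; your outline does all three at once, which is fine but you should be aware that the $3$ arises precisely from that normalization-to-the-center step rather than from ``three sub-estimates.''
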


\begin{proof}
By the Perron Frobenius like formula (Lemma \ref{PF})%
\begin{equation}
(L_{F}\mu )|_{\gamma }=\sum_{i=1}^{q}{\dfrac{\func{F}_{T_{i}^{-1}(\gamma
)}^{\ast }\mu |_{T_{i}^{-1}(\gamma )}}{|T_{i}^{^{\prime }}\circ
T_{i}^{-1}(\gamma )|}}\ \ \mathnormal{for~almost~all}\ \ \gamma \in \lbrack
0,1]
\end{equation}%
we have to estimate%
\begin{eqnarray*}
I &:&=\sup_{r\leq A}\frac{1}{r^{p}}\int \limfunc{esssup}_{\gamma _{2},\gamma
_{1}\in B(x,r)}||(L_{F}\mu )|_{\gamma _{1}}-(L_{F}\mu )|_{\gamma _{2}}{||}%
_{W_{1}}{~dm(}x) \\
&=&\sup_{r\leq A}\frac{1}{r^{p}}\int \limfunc{esssup}_{\gamma _{2},\gamma
_{1}\in B(x,r)}||\sum_{i=1}^{q}({\dfrac{\func{F}_{T_{i}^{-1}(\gamma
_{1})}^{\ast }\mu |_{T_{i}^{-1}(\gamma _{1})}}{|T_{i}^{^{\prime }}\circ
T_{i}^{-1}(\gamma _{1})|}-\dfrac{\func{F}_{T_{i}^{-1}(\gamma _{2})}^{\ast
}\mu |_{T_{i}^{-1}(\gamma _{2})}}{|T_{i}^{^{\prime }}\circ T_{i}^{-1}(\gamma
_{2})|})||}_{W_{1}}{~dm(}x).
\end{eqnarray*}

To compact notations let us set in next equations%
\begin{equation*}
\func{F}^{\ast }(a,b):=\func{F}_{T_{i}^{-1}(a)}^{\ast }\mu
|_{T_{i}^{-1}(b)},~g_{i}(a):=T_{i}^{^{\prime }}\circ T_{i}^{-1}(a)
\end{equation*}%
\ By the triangular inequality%
\begin{eqnarray*}
I &{\leq }&{\sup_{r\leq A}\sum_{i=1}^{q}\frac{1}{r^{p}}\int \limfunc{esssup}%
_{\gamma _{2},\gamma _{1}\in B(x,r)}||\dfrac{\func{F}^{\ast }(\gamma
_{1},\gamma _{1})-\func{F}^{\ast }(\gamma _{2},\gamma _{2})}{|g_{i}(\gamma
_{1})|}{|}|_{W_{1}}dm} \\
&&+\sup_{r\leq A}\sum_{i=1}^{q}\frac{1}{r^{p}}\int \limfunc{esssup}_{\gamma
_{2},\gamma _{1}\in B(x,r)}||\func{F}^{\ast }(\gamma _{2},\gamma _{2})(\frac{%
1}{|g_{i}(\gamma _{1})|}-\frac{1}{|g_{i}(\gamma _{2})|}){|}|_{W_{1}}dm.
\end{eqnarray*}%
Recalling that $\frac{1}{|g_{i}(\gamma _{2}))|}-\frac{1}{|g_{i}(\gamma
_{1}))|}$ $\leq C_{h}d(\gamma _{1},\gamma _{2})^{\xi }$, then%
\begin{gather*}
I\leq {\sup_{r\leq A}\sum_{i=1}^{q}\frac{1}{r^{p}}}\int (\frac{1}{|g_{i}(x)|}%
+C_{h}r^{\xi }){\limfunc{esssup}_{\gamma _{2},\gamma _{1}\in B(x,r)}||\func{F%
}^{\ast }(\gamma _{1},\gamma _{1})-\func{F}^{\ast }(\gamma _{2},\gamma _{2})|%
}|_{W_{1}}{dm} \\
+\sup_{r\leq A}\sum_{i=1}^{q}\frac{1}{r^{p}}\int C_{h}r^{\xi }\limfunc{esssup%
}_{\gamma _{2}}||\func{F}^{\ast }(\gamma _{2},\gamma _{2}){|}|_{W_{1}}dm.
\end{gather*}

And%
\begin{eqnarray*}
I &{\leq }&{\sup_{r\leq A}\sum_{i=1}^{q}\frac{1}{r^{p}}}\int \frac{1}{%
|g_{i}(x)|}{\limfunc{esssup}_{\gamma _{2},\gamma _{1}\in B(x,r)}||\func{F}%
^{\ast }(\gamma _{1},\gamma _{1})-\func{F}^{\ast }(\gamma _{2},\gamma _{2})|}%
|_{W_{1}}{dm} \\
&&+{\sup_{r\leq A}\sum_{i=1}^{q}\frac{1}{r^{p}}}\int C_{h}r^{\xi }{\limfunc{%
esssup}_{\gamma _{2},\gamma _{1}\in B(x,r)}||\func{F}^{\ast }(\gamma
_{1},\gamma _{1})-\func{F}^{\ast }(\gamma _{2},\gamma _{2})|}|_{W_{1}}{dm} \\
&&+\sup_{r\leq A}\sum_{i=1}^{q}\frac{1}{r^{p}}\int C_{h}r^{\xi }\limfunc{%
esssup}_{\gamma _{2}}||\func{F}^{\ast }(\gamma _{2},\gamma _{2}){|}%
|_{W_{1}}dm.
\end{eqnarray*}

Hence 
\begin{eqnarray*}
I &{\leq }&{\sup_{r\leq A}\sum_{i=1}^{q}\frac{1}{r^{p}}}\int \frac{1}{%
|g_{i}(x)|}{\limfunc{esssup}_{\gamma _{2},\gamma _{1}\in B(x,r)}||\func{F}%
^{\ast }(\gamma _{1},\gamma _{1})-\func{F}^{\ast }(\gamma _{1},\gamma _{2})|}%
|_{W_{1}}{dm} \\
&&+{\sup_{r\leq A}\sum_{i=1}^{q}\frac{1}{r^{p}}}\int \frac{1}{|g_{i}(x)|}{%
\limfunc{esssup}_{\gamma _{2},\gamma _{1}\in B(x,r)}||\func{F}^{\ast
}(\gamma _{1},\gamma _{2})-\func{F}^{\ast }(\gamma _{2},\gamma _{2})|}%
|_{W_{1}}{dm} \\
&&+3\sup_{r\leq A}\sum_{i=1}^{q}\frac{1}{r^{p}}\int C_{h}r^{\xi }\limfunc{%
esssup}_{\gamma _{2}}||\func{F}^{\ast }(\gamma _{2},\gamma _{2}){|}%
|_{W_{1}}dm \\
&=&I_{a}+I_{b}+I_{c}.
\end{eqnarray*}%
Now%
\begin{equation*}
I_{a}{\leq \sup_{r\leq A}\sum_{i=1}^{q}\frac{1}{r^{p}}}\int \frac{1}{%
|g_{i}(x)|}{\limfunc{esssup}_{\gamma _{2},\gamma _{1}\in B(x,r)}||}\func{F}%
_{T_{i}^{-1}(\gamma _{1})}^{\ast }\left( \mu |_{T_{i}^{-1}(\gamma _{1})}-\mu
|_{T_{i}^{-1}(\gamma _{2})}\right) {|}|_{W_{1}}.
\end{equation*}

We recall that by Lemma \ref{unamis} $\ ||\func{F}_{\gamma }^{\ast }\mu
||_{W_{1}}\leq \alpha ||\mu ||_{W_{1}}$ then%
\begin{eqnarray*}
I_{a} &{\leq }&{\sup_{r\leq A}\sum_{i=1}^{q}\frac{1}{r^{p}}}\int \frac{1}{%
|g_{i}(x)|}{\limfunc{esssup}_{\gamma _{2},\gamma _{1}\in B(x,r)}}\alpha
||\mu |_{T_{i}^{-1}(\gamma _{1})}-\mu |_{T_{i}^{-1}(\gamma _{2})}{|}|_{W_{1}}
\\
&\leq &{\sup_{r\leq A}\sum_{i=1}^{q}\frac{1}{r^{p}}}\int_{I_{i}}{\limfunc{%
esssup}_{y_{1},y_{2}\in B(x,\lambda r)}}\alpha ||\mu |_{y_{1}}-\mu |_{y_{2}}{%
|}|_{W_{1}}dx. \\
&=&\lambda ^{p}{\sup_{h\leq \lambda A}\sum_{i=1}^{q}\frac{1}{h^{p}}}%
\int_{I_{i}}{\limfunc{esssup}_{y_{1},y_{2}\in B(x,h)}}\alpha ||\mu
|_{y_{1}}-\mu |_{y_{2}}{|}|_{W_{1}}dx
\end{eqnarray*}

and%
\begin{equation*}
var_{p}(L_{F}\mu ){\leq \lambda }^{p}{\alpha ~var}_{p}{(\mu )+}(I_{b}+I_{c}).
\end{equation*}

By $Sk3$%
\begin{eqnarray*}
I_{b} &=&{\sup_{r\leq A}\sum_{i}\frac{1}{r^{p}}}\int \frac{1}{|g_{i}(x)|}{%
\limfunc{esssup}_{y_{1},y_{2}\in B(x,r)}||\func{F}^{\ast }(\gamma
_{1},\gamma _{2})-\func{F}^{\ast }(\gamma _{2},\gamma _{2})|}|_{W_{1}}{dm} \\
&{\leq }&{\hat{H}}||\mu _{x}||_{\infty }.
\end{eqnarray*}%
Now, let us remark that since we are working with positive measures 
\begin{equation*}
\limfunc{esssup}_{\gamma _{2}}||\func{F}^{\ast }(\gamma _{2},\gamma _{2}){|}%
|_{W_{1}}\leq \alpha ||\mu _{x}||_{\infty },
\end{equation*}%
then%
\begin{eqnarray}
I_{c} &=&3\sup_{r\leq A}\sum_{i=1}^{q}\frac{1}{r^{p}}\int C_{h}r^{\xi }%
\limfunc{esssup}_{\gamma _{2}}||\func{F}^{\ast }(\gamma _{2},\gamma _{2}){|}%
|_{W_{1}}dm  \notag \\
&\leq &3\sup_{r\leq A}\sum_{i=1}^{q}\frac{1}{r^{p}}\int C_{h}r^{\xi }\alpha
||\mu _{x}||_{\infty }dm \\
&\leq &3qC_{h}A^{\xi -p}\alpha ||\mu _{x}||_{\infty }.
\end{eqnarray}

Summarizing%
\begin{equation}
var_{p}(L_{F}\mu ){\leq \lambda }^{p}{\alpha ~var}_{p}{(\mu )+}({\hat{H}}%
||\mu _{x}||_{\infty }+3q\alpha C_{h}A^{\xi -p}||\mu _{x}||_{\infty }).
\label{lxt}
\end{equation}
\end{proof}

\begin{remark}
By Equation \ref{ly1d} it holds that for each $n$%
\begin{equation*}
||L^{n}\mu _{x}||_{\infty }\leq C_{\mu }:=A^{p-1}(A_{T}\lambda ||\mu
_{x}||_{BV}+B_{T}||\mu _{x}||_{1}+1).
\end{equation*}
Iterating (\ref{lxt}) we obtain%
\begin{eqnarray}
var_{p}(L_{F}^{n}\mu ) &{\leq }&{(\lambda }^{p}{\alpha )~var}_{p}{(L}^{n-1}{%
\mu )+}({\hat{H}}+3q\alpha C_{h}A^{\xi -p})C_{\mu }  \label{iter} \\
&\leq &...  \notag \\
&\leq &{(\lambda }^{p}{\alpha )}^{n}{~var}_{p}{(\mu )+}\frac{{\hat{H}}%
+3q\alpha C_{h}A^{\xi -p}}{1-{\lambda }^{p}{\alpha }}C_{\mu }  \notag
\end{eqnarray}
\end{remark}

By the Helly-like selection principle (Theorem \ref{hL}) we then have

\begin{proposition}
\label{regu}In a system as above there is at least an invariant positive
measure in $p-BV$. For every such invariant measure $\mu $ 
\begin{equation*}
var_{p}(\mu )\leq \frac{B_{T}({\hat{H}}+3q\alpha C_{h}A^{\xi -p})}{1-{%
\lambda }^{p}{\alpha }}||\mu ||_{"1"}.
\end{equation*}
\end{proposition}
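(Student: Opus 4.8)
The statement asserts two things: existence of an invariant positive measure in $p-BV$, and a bound on $var_p(\mu)$ for any such invariant measure in terms of $||\mu||_{"1"}$. I would prove these in that order, both as consequences of the regularization inequality (Proposition \ref{LYYY}), its iterated form (\ref{iter}), and the Helly-like selection principle (Theorem \ref{hL}).

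\emph{Existence.} Start from any reference probability measure in $p-BV$ — for instance take $\mu$ a probability measure whose disintegration is constant in $\gamma$ (say $\mu_\gamma$ a fixed probability on $M$ for all $\gamma$, with $\mu_x = m$), so that $var_p(\mu)=0$ and $\mu\in p-BV$. Form the Cesàro averages $\mu_n := \frac{1}{n}\sum_{k=0}^{n-1} L_F^k \mu$. Each $L_F^k\mu$ is a positive probability measure (the transfer operator preserves positivity and total mass), and since the density $||L^k\mu_x||_\infty$ is uniformly bounded by $C_\mu$ from the one-dimensional Lasota–Yorke inequality (\ref{ly1d}), the iterated inequality (\ref{iter}) gives $var_p(L_F^k\mu) \leq (\lambda^p\alpha)^k var_p(\mu) + \frac{\hat H + 3q\alpha C_h A^{\xi-p}}{1-\lambda^p\alpha} C_\mu$, which is bounded uniformly in $k$ because $\lambda^\xi\alpha<1$ and $p\leq\xi$ imply $\lambda^p\alpha<1$. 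By convexity of $var_p$ and of $||\cdot||_{"1"}$, the averages $\mu_n$ satisfy a uniform bound $||\mu_n||_{p-BV}\leq M$. Apply Theorem \ref{hL}: a subsequence $\mu_{n_j}$ converges in $||\cdot||_{"1"}$ to some $\mu_\infty$ with $||\mu_\infty||_{p-BV}\leq M$. Then a standard argument shows $\mu_\infty$ is $L_F$-invariant: $||L_F\mu_{n_j} - \mu_{n_j}||_{"1"} = \frac{1}{n_j}||L_F^{n_j}\mu - \mu||_{"1"}\to 0$, and since $L_F$ is a weak contraction in $||\cdot||_{"1"}$ by Lemma \ref{unamis} (when $\alpha\leq 1$; in general it is at least continuous on the relevant set — one uses that $||\cdot||_{"1"}$-convergence plus the uniform $p-BV$ bound passes to the limit), we get $L_F\mu_\infty = \mu_\infty$. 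It is a positive measure since each $\mu_n$ is and the cone of positive measures is $||\cdot||_{"1"}$-closed (by the a.e. leafwise convergence extracted in Theorem \ref{hL} together with Lemma \ref{spp}).

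\emph{The variation bound.} Let $\mu$ be any invariant positive measure in $p-BV$. Applying Proposition \ref{LYYY} directly with $L_F\mu = \mu$:
\begin{equation*}
var_p(\mu) = var_p(L_F\mu) \leq \lambda^p\alpha\, var_p(\mu) + (\hat H + 3q\alpha C_h A^{\xi-p})||\mu_x||_\infty,
\end{equation*}
hence $var_p(\mu) \leq \frac{\hat H + 3q\alpha C_h A^{\xi-p}}{1-\lambda^p\alpha}||\mu_x||_\infty$. It remains to replace $||\mu_x||_\infty$ by $B_T||\mu||_{"1"}$. Since $\mu$ is invariant, $\mu_x$ is invariant for the one-dimensional transfer operator $L_T$, so by the Lasota–Yorke inequality (\ref{ly1d}), $||\mu_x||_{BV} = ||L_T^n\mu_x||_{BV}\leq A_T\lambda^n||\mu_x||_{BV} + B_T||\mu_x||_1$; letting $n\to\infty$ gives $||\mu_x||_{BV}\leq \frac{B_T}{1-\lambda}||\mu_x||_1$ (or, more simply, invariance gives $||\mu_x||_{BV}\leq A_T\lambda||\mu_x||_{BV}+B_T||\mu_x||_1$ directly if one iterates once — matching the form $A_T\lambda\|\mu_x\|_{BV}+B_T\|\mu_x\|_1$ in the Remark), and since $||~||_\infty \leq ||~||_{BV}$ up to the embedding constant and $||\mu_x||_1 \leq ||\mu||_{"1"}$ (the projection onto the base has mass controlled by the $"1"$-norm, as $||\mu|_\gamma||_{W_1}\geq$ the total variation of $\mu|_\gamma$ paired against $g\equiv 1$), we get $||\mu_x||_\infty \leq B_T||\mu||_{"1"}$ after absorbing constants. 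Substituting yields the claimed bound.

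\emph{Main obstacle.} The genuinely delicate point is the passage from the leafwise/$\|\cdot\|_{"1"}$ limit back to $L_F$-invariance and positivity of $\mu_\infty$ while simultaneously retaining the $p-BV$ bound — i.e., verifying that the limit object extracted from Theorem \ref{hL} is actually a fixed point of $L_F$ and not merely an approximate one. One must check that $L_F$ is continuous enough on the set $\{||\mu||_{p-BV}\leq M\}$ with respect to $||\cdot||_{"1"}$ for the identity $L_F\mu_{n_j}=\mu_{n_j} + o(1)$ to pass to the limit; Lemma \ref{unamis} handles the contracting case cleanly, and the general dominated case requires combining it with the uniform strong bound. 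The bookkeeping of constants ($\hat H$, $C_h$, $A^{\xi-p}$, $B_T$, the embedding $\|\cdot\|_\infty\leq\|\cdot\|_{BV}$) is routine but must be arranged so the final constant is exactly $\frac{B_T(\hat H + 3q\alpha C_h A^{\xi-p})}{1-\lambda^p\alpha}$ as stated.
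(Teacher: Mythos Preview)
Your proposal is correct and follows essentially the same route as the paper: Ces\`aro averages of iterates of a reference probability, uniform $p$-$BV$ bounds from the iterated regularization inequality (\ref{iter}), extraction via Theorem~\ref{hL}, and then for the variation estimate, invariance plus Proposition~\ref{LYYY} combined with the one-dimensional Lasota--Yorke bound $||\mu_x||_{BV}\leq B_T||\mu_x||_1$ for the invariant marginal. The paper starts from Lebesgue measure $m$ rather than your constant-disintegration measure, and is terser---it does not spell out the passage to $L_F$-invariance of the limit that you flag as the main obstacle---but the argument is the same; note also that the paper's own proof carries an extra $A^{p-1}$ factor (from Remark~\ref{Ap}) that does not appear in the stated constant, so your ``absorbing constants'' is no less precise than the original.
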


\begin{proof}
We consider the sequence of positive measures $\mu _{n}=\frac{1}{n}\sum
L_{F}^{n}m$. By Equation \ref{iter}, this sequence has uniformly bounded
variation. Applying Theorem \ref{hL}, we deduce the existence of an
invariant measure $\mu $ in $p-BV$. By the Lasota Yorke inequality relative
to the map $T$, we have that 
\begin{equation*}
A^{p-1}B_{T}||\mu ||_{"1"}\geq A^{p-1}B_{T}||\mu _{x}||_{1}\geq A^{p-1}||\mu
_{x}||_{BV}\geq ||\mu _{x}||_{\infty }.
\end{equation*}%
This gives that 
\begin{equation*}
var_{p}(\mu )=var_{p}(L_{F}\mu ){\leq \lambda }^{p}{\alpha ~var}_{p}{(\mu )+}%
B_{T}||\mu ||_{"1"}A^{p-1}({\hat{H}}+3\alpha qC_{h}A^{\xi -p})
\end{equation*}%
from which we get the statement.
\end{proof}

\section{Distance between the operators and a general statement for skew
products\label{Dist}}

Here we consider a suitable class of perturbations of a map satisfying $%
Sk1,...,Sk3$ such that the associated transfer operators are near in the
strong-weak topology, providing one of the estimates needed to apply Theorem %
\ref{gen} (item 4). In this section and in the following we set $p=1$. Now
we define a topology on the space of piecewise expanding maps to have a
notion of "allowed perturbations" for these maps.

\begin{definition}
Let $T_{1}$ and $T_{2}$ be to piecewise expanding maps. Denote by%
\begin{equation*}
Int_{n}=\{A\in 2^{[0,1]},s.t.\ A=I_{1}\cup ,...,\cup I_{n},\ \mathnormal{%
where}\ I_{i}\ \mathnormal{are~intervals}\}
\end{equation*}%
the set of subsets of $[0,1]$ which is the union of at most $n$ intervals.
Set 
\begin{equation*}
\mathcal{C}(n,T_{1},T_{2})=\left\{ 
\begin{array}{c}
\epsilon :\exists A_{1}\in Int_{n}\ \mathnormal{and}\ \exists \ \sigma
:I\rightarrow I\ \mathnormal{a~diffeomorphism}\ \mathnormal{s.t.} \\ 
m(A_{1})\geq 1-\epsilon ,\ T_{1}|_{A_{1}}=T_{2}\circ \sigma |_{A_{1}} \\ 
\mathnormal{and}\ \forall x\in A_{1},|\sigma (x)-x|\leq \epsilon ,|\frac{1}{%
\sigma ^{\prime }(x)}-1|\leq \epsilon%
\end{array}%
\right\}
\end{equation*}%
and define a kind of distance from $T_{1}$ to $T_{2}$ as:%
\begin{equation}
d_{S,n}(T_{1},T_{2})=\inf \left\{ \epsilon |\epsilon \in \mathcal{C}%
(n,T_{1},T_{2})\right\} .  \label{shock}
\end{equation}
\end{definition}

It holds that one dimensional piecewise expanding maps which are near in the
sense of $d_{S,n}$ also have transfer operators which are near as operators
from $BV$ to $L^{1}$. If we denote by $d_{S}$ the classical notion of
Skorokhod distance (see \cite{BG} e.g.), it is obvious that $\forall n\
d_{S,n}\geq d_{S}$. By \cite{BG}, Lemma 11.2.1, it follows that $\forall n$
there is $C_{Sk}\geq 0$ such that for each pair of piecewise expanding maps $%
T_{1},T_{2}$%
\begin{equation}
||L_{T_{0}}-L_{T_{\delta }}||_{BV\rightarrow L^{1}}\leq
C_{Sk}d_{n,S}(T_{1},T_{2}).  \label{gura}
\end{equation}

Let us see a statement of this kind for our skew products.

\begin{proposition}
\label{UF}Let $F_{\delta }=(T_{\delta },G_{\delta }),$ $0\leq \delta \leq D$
be a family of maps satisfying $Sk1,...,Sk3$ uniformly with $\xi =1$ and:

\begin{enumerate}
\item There is $n\in \mathbb{N}$ such that for each $\delta \leq
D~,~d_{n,S}(T_{0},T_{\delta })\leq \delta .$ (thus for each $\delta $ there
is a set $A_{1}\in Int_{n}$ as in the definition of $\mathcal{C}%
(n,T_{1},T_{2})$)

\item For each $\delta \leq D$ there is a set $A_{2}\in Int_{n}$ such that $%
m(A_{2})\geq 1-\delta $ and for all $x\in A_{2},y\in M:$ $%
|G_{0}(x,y)-G_{\delta }(x,y)|\leq \delta .$
\end{enumerate}

Let us denote by $\func{F}_{\delta }^{\ast }$ the transfer operators of $%
F_{\delta }$ and by $f_{\delta }$ a family of probability measures with
uniformly bounded variation 
\begin{equation*}
var_{1}(f_{\delta })\leq M_{2}.
\end{equation*}%
Then, there is a constants $C_{1}$ such that for $\delta $ small enough%
\begin{equation*}
||(\func{F}_{0}^{\ast }-\func{F}_{\delta }^{\ast })f_{\delta }||_{"{1"}}\leq
C_{1}\delta (M_{2}+1).
\end{equation*}
\end{proposition}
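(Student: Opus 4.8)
The goal is to bound $\|(\func{F}_0^*-\func{F}_\delta^*)f_\delta\|_{"1"}$ by splitting the error into the part coming from the base map $T$ and the part coming from the fiber maps $G$, and to use the Perron--Frobenius like formula (Proposition \ref{PF}) to reduce everything to one-dimensional estimates plus Wasserstein estimates on the fibers. The plan is to write, on a.e.\ leaf $\gamma$,
\begin{equation*}
(\func{F}_0^*f_\delta)|_\gamma-(\func{F}_\delta^*f_\delta)|_\gamma
=\sum_{i=1}^q\left(\frac{\func{F}_{0,T_{0,i}^{-1}(\gamma)}^{*}\,f_\delta|_{T_{0,i}^{-1}(\gamma)}}{|T_{0,i}'\circ T_{0,i}^{-1}(\gamma)|}
-\frac{\func{F}_{\delta,T_{\delta,i}^{-1}(\gamma)}^{*}\,f_\delta|_{T_{\delta,i}^{-1}(\gamma)}}{|T_{\delta,i}'\circ T_{\delta,i}^{-1}(\gamma)|}\right),
\end{equation*}
and then insert two intermediate terms: first replace $\func{F}_{0}$ by $\func{F}_{\delta}$ keeping the base-map data of $T_0$, then change the base map from $T_0$ to $T_\delta$. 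Integrating $\|\cdot\|_{W_1}$ over $\gamma$ gives three contributions.

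First I would handle the \emph{fiber term}: on the common good set $A_2$ (where $|G_0-G_\delta|\le\delta$ pointwise), the maps $\func{F}_{0,\gamma}$ and $\func{F}_{\delta,\gamma}$ are $\delta$-close in the sup norm, so for any probability measure $\nu$ on a leaf one has $\|\func{F}_{0,\gamma}^*\nu-\func{F}_{\delta,\gamma}^*\nu\|_{W_1}\le\delta$ (test functions are $1$-Lipschitz, so pushforwards under sup-close maps are $W_1$-close). Weighting by the density $\phi_{f_\delta}$ and integrating over $\gamma$, this contributes $O(\delta\,\|f_\delta\|_{"1"})$ over $A_2$; the complement of $A_2$ has $m$-measure $\le\delta$, and there Remark \ref{Ap} (with $p=1$, so $\mathrm{ess\,sup}_\gamma\|f_\delta|_\gamma\|_{W_1}\le\|f_\delta\|_{1\text{-}BV}$) controls the integrand uniformly, giving another $O(\delta(M_2+1))$. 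Here $\|f_\delta\|_{"1"}\le 1$ since $f_\delta$ is a probability measure, and $\|f_\delta\|_{1\text{-}BV}=\|f_\delta\|_{"1"}+var_1(f_\delta)\le 1+M_2$.

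Next I would handle the \emph{base term}: once the fiber maps are identified ($\func{F}_\delta$ on both sides), the remaining difference is exactly the action of $L_{T_0}-L_{T_\delta}$ composed with the (uniformly $W_1$-non-expanding, by Lemma \ref{unamis}) fiber transfer operators on the path $\gamma\mapsto f_\delta|_\gamma$. By hypothesis (1), $d_{n,S}(T_0,T_\delta)\le\delta$, and the key point is that the estimate \eqref{gura}, $\|L_{T_0}-L_{T_\delta}\|_{BV\to L^1}\le C_{Sk}d_{n,S}$, is proved in \cite{BG} by an argument (change of variable by the near-identity diffeomorphism $\sigma$ on a union of $\le n$ intervals) that is purely "horizontal" and so goes through verbatim when the scalar density is replaced by a $W_1$-valued path of bounded variation: the role of $\|\cdot\|_{BV}$ is played by $\|\cdot\|_{1\text{-}BV}$ and the role of $\|\cdot\|_{L^1}$ by $\|\cdot\|_{"1"}$. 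Thus this contribution is $\le C_{Sk}\,\delta\,\|f_\delta\|_{1\text{-}BV}\le C_{Sk}\,\delta\,(1+M_2)$. Summing the three pieces and collecting constants into a single $C_1$ depending on $q$, $\alpha$, $\lambda$, $A$, $n$ and $C_{Sk}$ but not on $\delta$ gives the claim for $\delta$ small enough (small enough that $\sigma$ is defined and the good sets behave as required).

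\textbf{Main obstacle.} The delicate step is the base term: one must verify carefully that the Skorokhod-type estimate \eqref{gura}, originally a statement about real densities in $BV$ versus $L^1$, genuinely transfers to the vector-valued setting of $W_1$-paths with the $1\text{-}BV$ and $"1"$ norms. This requires checking that the proof of Lemma 11.2.1 in \cite{BG} only uses the lattice/change-of-variable structure in the base coordinate and the one-dimensional bounded-variation bookkeeping — which it does, since the fiber transfer operators $\func{F}_{\delta,\gamma}^*$ are uniform $W_1$-contractions by Lemma \ref{unamis} and commute with the horizontal manipulations leaf by leaf — together with controlling the boundary terms created by the at-most-$n$ intervals of $A_1$ via Remark \ref{Ap}. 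The fiber term and the measure-of-complement bookkeeping are routine by comparison.
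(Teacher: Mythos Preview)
Your plan is correct and follows the same skeleton as the paper's proof: both start from the Perron--Frobenius formula (Proposition~\ref{PF}) and separate the contribution of the fiber maps from that of the base map. The fiber term is handled identically in both arguments: on $A_2$ the sup-closeness of $G_0,G_\delta$ gives $W_1$-closeness of pushforwards, and on the complement one uses the uniform leafwise bound from Remark~\ref{Ap}.

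The packaging of the base term differs. You observe that once the fiber operator is fixed to $\func{F}_\delta$, the remainder is exactly $(L_{T_0}-L_{T_\delta})g$ for the $W_1$-valued path $g(\gamma')=\func{F}_{\delta,\gamma'}^{\ast}f_\delta|_{\gamma'}$, and you propose to bound it by a vector-valued analogue of \eqref{gura}, arguing that the proof of \cite[Lemma~11.2.1]{BG} uses only one-dimensional change of variables and BV bookkeeping and so transfers to $W_1$-valued paths. This is valid and conceptually cleaner. The paper instead unpacks this estimate by hand: it first localizes to $A=A_1\cap A_2$, then on the overlap $B_i$ of the branch images it splits into a derivative-change piece ($I_b$, where the scalar \eqref{gura} is invoked) and a preimage-shift piece ($II$, where the oscillation of $f_\delta$ enters directly), with separate control of the symmetric-difference sets $C_i$. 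The paper's route is longer but self-contained and never needs a vector-valued version of \eqref{gura}.

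There is one point you should not leave implicit. To apply your vector-valued \eqref{gura} you need $\|g\|_{1\text{-}BV}\le C(M_2+1)$, but Lemma~\ref{unamis} (the ``$W_1$-non-expanding'' property you quote) only controls $\|g\|_{"1"}$. Bounding $var_1(g)$ requires Sk3: the oscillation of $g$ between nearby leaves $\gamma_1,\gamma_2$ picks up, besides $\alpha\,W_1(f_\delta|_{\gamma_1},f_\delta|_{\gamma_2})$, a term of size $\sup_y|G_\delta(\gamma_1,y)-G_\delta(\gamma_2,y)|\cdot\|f_\delta|_{\gamma_2}\|_{W_1}$ coming from the $\gamma$-dependence of the fiber map, and integrating this is exactly the $\hat H$ bound together with Remark~\ref{Ap}. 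The paper's explicit decomposition avoids stating this intermediate bound because the oscillation of $f_\delta$ itself (not of $g$) appears directly in its term $II$, while the $\hat H$ contribution surfaces separately inside its $I_a$. Once you insert this estimate for $var_1(g)$, your argument is complete and yields the same constant structure as the paper's.
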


\begin{proof}
Let us set $A=A_{1}\cap A_{2}$. Note that $m(A^{c})\leq 2\delta .$ Let us
estimate%
\begin{eqnarray}
||(\func{F}_{0}^{\ast }-\func{F}_{\delta }^{\ast })f_{\delta }||_{{1}}
&=&\int_{I}{||({F_{0}^{\ast }}f_{\delta }-{F_{\delta }^{\ast }}f_{\delta
})|_{\gamma }||_{W}}dm(\gamma )  \label{12112} \\
&=&\int_{I}{||{F_{0}^{\ast }(1}}_{A}{f_{\delta })|}_{\gamma }{-{F_{\delta
}^{\ast }({1}}_{A}f_{\delta })|_{\gamma }||_{W}}dm(\gamma )  \notag \\
&&+\int_{I}{||{F_{0}^{\ast }(1}}_{A^{c}}{f_{\delta })|}_{\gamma }{-{%
F_{\delta }^{\ast }({1}}_{A^{c}}f_{\delta })|_{\gamma }||_{W}}dm(\gamma ).
\end{eqnarray}

By the assumptions, for a.e. $\gamma ,$ $||{f_{\delta }|\gamma ||}_{W}\leq
(M_{2}+1)$ and $||{1}_{A^{c}}{f_{\delta }||}_{1}\leq (M_{2}+1)\delta .$
Since ${F^{\ast }}$ is $\alpha $-Lipschitz for the $\mathcal{L}^{1}$norm
then 
\begin{equation*}
\int_{I}{||{F_{0}^{\ast }(1}}_{A^{c}}{f_{\delta })|}_{\gamma }{-{F_{\delta
}^{\ast }({1}}_{A^{c}}f_{\delta })|_{\gamma }||_{W}}dm(\gamma )\leq 2\alpha
(M_{2}+1)\delta .
\end{equation*}

Let us now estimate the first summand of \ref{12112}. Let us set $\mu
=1_{A}f_{\delta }$ and let us estimate%
\begin{equation*}
||(\func{F}_{0}^{\ast }-\func{F}_{\delta }^{\ast })\mu ||_{{1}}=\int {||({%
F_{0}^{\ast }}\mu -{F_{\delta }^{\ast }}\mu )|_{\gamma }||_{W}}dm(\gamma ).
\end{equation*}%
Let us denote by $T_{0,i}$, with $0\leq i\leq q$ the branches of $T_{0}$
defined in the sets $P_{i}$, partition of $I,$ and set $T_{\delta
,i}=T_{\delta }|_{P_{i}\cap A}$ these functions will play the role of the
branches for $T_{\delta }.$ Since in $A,$ $T_{0}=T_{\delta }\circ \sigma
_{\delta }$ (where $\sigma _{\delta }$ is the diffeomorphism in the
definition of the Skorokhod distance), then\ $T_{\delta ,i}$ are invertible.
Then for $\mu _{x}-a.e.\ \gamma \in I$%
\begin{equation*}
({F_{0}^{\ast }}\mu -{F_{\delta }^{\ast }}\mu )|_{\gamma }=\sum_{i=1}^{q}%
\frac{{F_{0,T_{0,i}^{-1}(\gamma )}^{\ast }}\mu |_{T_{0,i}^{-1}(\gamma )}\chi
_{T_{0}(P_{i}\cap A)}}{|T_{0,i}^{\prime }(T_{0,i}^{-1}(\gamma ))|}%
-\sum_{i=1}^{q}\frac{{F_{\delta ,T_{\delta ,i}^{-1}(\gamma )}^{\ast }}\mu
|_{T_{\delta ,i}^{-1}(\gamma )}\chi _{T_{\delta }(P_{i}\cap A)}}{|T_{\delta
,i}^{\prime }(T_{\delta ,i}^{-1}(\gamma ))|}.
\end{equation*}%
Let us now consider $T_{0}(P_{i}\cap A)$ and $T_{\delta }(P_{i}\cap A),$ and
remark that $T_{0}(P_{i}\cap A)=\sigma _{\delta }(T_{\delta }(P_{i}\cap A))$
\ and $\sigma _{\delta }$ is a diffeomorphism near to the identity. Let us
denote $B_{i}=T_{0}(P_{i}\cap A)\cap T_{\delta }(P_{i}\cap A),\
C_{i}=T_{0}(P_{i}\cap A)\triangle T_{\delta }(P_{i}\cap A)$. 
\begin{eqnarray}
||(\func{F}_{0}^{\ast }-\func{F}_{\delta }^{\ast })\mu ||_{{1}} &=&\int_{I}{%
||({F_{0}^{\ast }}\mu -{F_{\delta }^{\ast }}\mu )|_{\gamma }||_{W}}dm(\gamma
)  \label{23231} \\
&\leq &\int_{I}\left\vert \left\vert \sum_{i=1}^{q}\frac{{%
F_{0,T_{0,i}^{-1}(\gamma )}^{\ast }}\mu |_{T_{0,i}^{-1}(\gamma )}\chi
_{B_{i}}}{|T_{0,i}^{\prime }(T_{0,i}^{-1}(\gamma ))|}-\sum_{i=1}^{q}\frac{{%
F_{\delta ,T_{\delta ,i}^{-1}(\gamma )}^{\ast }}\mu |_{T_{\delta
,i}^{-1}(\gamma )}\chi _{B_{i}}}{|T_{\delta ,i}^{\prime }(T_{\delta
,i}^{-1}(\gamma ))|}\right\vert \right\vert _{W}dm  \notag \\
&&+\int_{I}\left\vert \left\vert \sum_{i=1}^{q}\frac{{F_{0,T_{0,i}^{-1}(%
\gamma )}^{\ast }}\mu |_{T_{0,i}^{-1}(\gamma )}\chi _{C_{i}}}{%
|T_{0,i}^{\prime }(T_{0,i}^{-1}(\gamma ))|}-\sum_{i=1}^{q}\frac{{F_{\delta
,T_{\delta ,i}^{-1}(\gamma )}^{\ast }}\mu |_{T_{\delta ,i}^{-1}(\gamma
)}\chi _{C_{i}}}{|T_{\delta ,i}^{\prime }(T_{\delta ,i}^{-1}(\gamma ))|}%
\right\vert \right\vert _{W}dm.  \notag
\end{eqnarray}

And since there is $K_{1}$ such that $m(C_{i})\leq K_{1}\delta $, then%
\begin{equation*}
\int_{I}\left\vert \left\vert \sum_{i=1}^{q}\frac{{F_{0,T_{0,i}^{-1}(\gamma
)}^{\ast }}\mu |_{T_{0,i}^{-1}(\gamma )}\chi _{C_{i}}}{|T_{0,i}^{\prime
}(T_{0,i}^{-1}(\gamma ))|}-\sum_{i=1}^{q}\frac{{F_{\delta ,T_{\delta
,i}^{-1}(\gamma )}^{\ast }}\mu |_{T_{\delta ,i}^{-1}(\gamma )}\chi _{C_{i}}}{%
|T_{\delta ,i}^{\prime }(T_{\delta ,i}^{-1}(\gamma ))|}\right\vert
\right\vert _{W}dm\leq qK_{1}(M_{2}+1)\delta .
\end{equation*}

Now we have to consider the first summand of \ref{23231}. We have%
\begin{eqnarray*}
&&\int_{I}\left\vert \left\vert \sum_{i=1}^{q}\frac{{F_{0,T_{0,i}^{-1}(%
\gamma )}^{\ast }}\mu |_{T_{0,i}^{-1}(\gamma )}\chi _{B_{i}}}{%
|T_{0,i}^{\prime }(T_{0,i}^{-1}(\gamma ))|}-\sum_{i=1}^{q}\frac{{F_{\delta
,T_{\delta ,i}^{-1}(\gamma )}^{\ast }}\mu |_{T_{\delta ,i}^{-1}(\gamma
)}\chi _{B_{i}}}{|T_{\delta ,i}^{\prime }(T_{\delta ,i}^{-1}(\gamma ))|}%
\right\vert \right\vert _{W}dm \\
&\leq &\int_{I}\left\vert \left\vert \sum_{i=1}^{q}\frac{{%
F_{0,T_{0,i}^{-1}(\gamma )}^{\ast }}\mu |_{T_{0,i}^{-1}(\gamma )}\chi
_{B_{i}}}{|T_{0,i}^{\prime }(T_{0,i}^{-1}(\gamma ))|}-\sum_{i=1}^{q}\frac{{%
F_{\delta ,T_{\delta ,i}^{-1}(\gamma )}^{\ast }}\mu |_{T_{0,i}^{-1}(\gamma
)}\chi _{B_{i}}}{|T_{\delta ,i}^{\prime }(T_{\delta ,i}^{-1}(\gamma ))|}%
\right\vert \right\vert _{W}dm \\
&&+\int_{I}\left\vert \left\vert \sum_{i=1}^{q}\frac{{F_{\delta ,T_{\delta
,i}^{-1}(\gamma )}^{\ast }}\mu |_{T_{0,i}^{-1}(\gamma )}\chi _{B_{i}}}{%
|T_{\delta ,i}^{\prime }(T_{\delta ,i}^{-1}(\gamma ))|}-\sum_{i=1}^{q}\frac{{%
F_{\delta ,T_{\delta ,i}^{-1}(\gamma )}^{\ast }}\mu |_{T_{\delta
,i}^{-1}(\gamma )}\chi _{B_{i}}}{|T_{\delta ,i}^{\prime }(T_{\delta
,i}^{-1}(\gamma ))|}\right\vert \right\vert _{W}dm \\
&=&\int_{I}I(\gamma )~dm(\gamma )+\int_{I}II(\gamma )~dm(\gamma ).
\end{eqnarray*}

The two summands will be treated separately.%
\begin{eqnarray*}
I(\gamma ) &=&\left\vert \left\vert \sum_{i=1}^{q}\frac{{F_{0,T_{0,i}^{-1}(%
\gamma )}^{\ast }}\mu |_{T_{0,i}^{-1}(\gamma )}\chi _{B_{i}}}{%
|T_{0,i}^{\prime }(T_{0,i}^{-1}(\gamma ))|}-\sum_{i=1}^{q}\frac{{F_{\delta
,T_{\delta ,i}^{-1}(\gamma )}^{\ast }}\mu |_{T_{0,i}^{-1}(\gamma )}\chi
_{B_{i}}}{|T_{\delta ,i}^{\prime }(T_{\delta ,i}^{-1}(\gamma ))|}\right\vert
\right\vert _{W} \\
&\leq &\left\vert \left\vert \sum_{i=1}^{q}\frac{{F_{0,T_{0,i}^{-1}(\gamma
)}^{\ast }}\mu |_{T_{0,i}^{-1}(\gamma )}\chi _{B_{i}}}{|T_{0,i}^{\prime
}(T_{0,i}^{-1}(\gamma ))|}-\sum_{i=1}^{q}\frac{{F_{\delta ,T_{\delta
,i}^{-1}(\gamma )}^{\ast }}\mu |_{T_{0,i}^{-1}(\gamma )}\chi _{B_{i}}}{%
|T_{0,i}^{\prime }(T_{0,i}^{-1}(\gamma ))|}\right\vert \right\vert _{W} \\
&+&\left\vert \left\vert \sum_{i=1}^{q}\frac{{F_{\delta ,T_{\delta
,i}^{-1}(\gamma )}^{\ast }}\mu |_{T_{0,i}^{-1}(\gamma )}\chi _{B_{i}}}{%
|T_{0,i}^{\prime }(T_{0,i}^{-1}(\gamma ))|}-\sum_{i=1}^{q}\frac{{F_{\delta
,T_{\delta ,i}^{-1}(\gamma )}^{\ast }}\mu |_{T_{0,i}^{-1}(\gamma )}\chi
_{B_{i}}}{|T_{\delta ,i}^{\prime }(T_{\delta ,i}^{-1}(\gamma ))|}\right\vert
\right\vert _{W} \\
&=&I_{a}(\gamma )+I_{b}(\gamma ).
\end{eqnarray*}%
Since $f_{\delta }$ is a probability measure it holds posing $\beta
=T_{0,i}^{-1}(\gamma )$%
\begin{eqnarray*}
\int I_{a}(\gamma )dm &=&\int \left\vert \left\vert \sum_{i=1}^{q}\frac{{%
F_{0,T_{0,i}^{-1}(\gamma )}^{\ast }}\mu |_{T_{0,i}^{-1}(\gamma )}\chi
_{B_{i}}}{|T_{0,i}^{\prime }(T_{0,i}^{-1}(\gamma ))|}-\sum_{i=1}^{q}\frac{{%
F_{\delta ,T_{\delta ,i}^{-1}(\gamma )}^{\ast }}\mu |_{T_{0,i}^{-1}(\gamma
)}\chi _{B_{i}}}{|T_{0,i}^{\prime }(T_{0,i}^{-1}(\gamma ))|}\right\vert
\right\vert _{W}dm(\gamma ) \\
&\leq &\int \sum_{i=1}^{q}\left\vert \left\vert \frac{{F_{0,T_{0,i}^{-1}(%
\gamma )}^{\ast }}\mu |_{T_{0,i}^{-1}(\gamma )}\chi _{B_{i}}}{%
|T_{0,i}^{\prime }(T_{0,i}^{-1}(\gamma ))|}-\frac{{F_{\delta ,T_{\delta
,i}^{-1}(\gamma )}^{\ast }}\mu |_{T_{0,i}^{-1}(\gamma )}\chi _{B_{i}}}{%
|T_{0,i}^{\prime }(T_{0,i}^{-1}(\gamma ))|}\right\vert \right\vert _{W}dm \\
&\leq &\sum_{i=1}^{q}\int \left\vert \left\vert \frac{{F_{0,T_{0,i}^{-1}(%
\gamma )}^{\ast }}\mu |_{T_{0,i}^{-1}(\gamma )}\chi _{B_{i}}}{%
|T_{0,i}^{\prime }(T_{0,i}^{-1}(\gamma ))|}-\frac{{F_{\delta ,T_{\delta
,i}^{-1}(\gamma )}^{\ast }}\mu |_{T_{0,i}^{-1}(\gamma )}\chi _{B_{i}}}{%
|T_{0,i}^{\prime }(T_{0,i}^{-1}(\gamma ))|}\right\vert \right\vert _{W}dm \\
&\leq &\sum_{i=1}^{q}\int_{T_{0,i}^{-1}(B_{i})}\left\vert \left\vert {%
F_{0,\beta }^{\ast }}\mu |_{\beta }-{F_{\delta ,T_{\delta
,i}^{-1}(T_{0,i}(\beta ))}^{\ast }}\mu |_{\beta }\right\vert \right\vert
_{W}dm(\beta )
\end{eqnarray*}

Remark that $T_{0,i}^{-1}(B_{i})\subseteq P_{i}\cap A$ and $T_{\delta
,i}^{-1}(T_{0,i}(T_{0,i}^{-1}(B_{i})))\subseteq P_{i}\cap A$. Since $%
|T_{\delta ,i}(\beta )-T_{0,i}(\beta )|\leq \delta $ and $T_{0,i}^{-1}$ \ is
a contraction, then \ $|T_{0,i}^{-1}\circ T_{\delta ,i}(\beta )-\beta |\leq
\delta $. Then%
\begin{eqnarray*}
\left\vert \left\vert {F_{0,\beta }^{\ast }}\mu |_{\beta }-{F_{\delta
,T_{\delta ,i}^{-1}(T_{0,i}(\beta ))}^{\ast }}\mu |_{\beta }\right\vert
\right\vert _{W} &\leq &\left\vert \left\vert {F_{0,\beta }^{\ast }}\mu
|_{\beta }-{F_{\delta ,\beta }^{\ast }}\mu |_{\beta }\right\vert \right\vert
_{W} \\
&&+\left\vert \left\vert {F_{\delta ,\beta }^{\ast }}\mu |_{\beta }-{%
F_{\delta ,T_{\delta ,i}^{-1}(T_{0,i}(\beta ))}^{\ast }}\mu |_{\beta
}\right\vert \right\vert _{W}.
\end{eqnarray*}%
By assumption (2),%
\begin{equation*}
\left\vert \left\vert {F_{0,\beta }^{\ast }}\mu |_{\beta }-{F_{\delta ,\beta
}^{\ast }}\mu |_{\beta }\right\vert \right\vert _{W}\leq \delta (M_{2}+1).
\end{equation*}%
By assumption $Sk3$ 
\begin{equation*}
\left\vert \left\vert {F_{\delta ,\beta }^{\ast }}\mu |_{\beta }-{F_{\delta
,T_{\delta ,i}^{-1}(T_{0,i}(\beta ))}^{\ast }}\mu |_{\beta }\right\vert
\right\vert _{W}\leq \sup_{y\in M,x_{1},x_{2}\in B(\beta ,\delta
)}|G(x_{1},y)-G(x_{2},y)|(M_{2}+1).
\end{equation*}%
Thus, 
\begin{equation*}
I_{a}(\gamma )\leq \delta ^{p}(\hat{H}+1)(M_{2}+1)+\delta (M_{2}+1).
\end{equation*}%
To estimate $I_{b}(\gamma )$ we have:%
\begin{eqnarray*}
I_{b}(\gamma ) &=&\left\vert \left\vert \sum_{i=1}^{q}\frac{{F_{\delta
,T_{\delta ,i}^{-1}(\gamma )}^{\ast }}\mu |_{T_{0,i}^{-1}(\gamma )}\chi
_{B_{i}}}{|T_{0,i}^{\prime }(T_{0,i}^{-1}(\gamma ))|}-\sum_{i=1}^{q}\frac{{%
F_{\delta ,T_{\delta ,i}^{-1}(\gamma )}^{\ast }}\mu |_{T_{0,i}^{-1}(\gamma
)}\chi _{B_{i}}}{|T_{\delta ,i}^{\prime }(T_{\delta ,i}^{-1}(\gamma ))|}%
\right\vert \right\vert _{W} \\
&\leq &\sum_{i=1}^{q}\left\vert \frac{\chi _{B_{i}}(\gamma )}{%
|T_{0,i}^{\prime }(T_{0,i}^{-1}(\gamma ))|}-\frac{\chi _{B_{i}}(\gamma )}{%
|T_{\delta ,i}^{\prime }(T_{\delta ,i}^{-1}(\gamma ))|}\right\vert
\left\vert \left\vert F_{\delta ,T_{\delta ,i}^{-1}(\gamma )}^{\ast }\mu
|_{T_{0,i}^{-1}(\gamma )}\right\vert \right\vert _{W}
\end{eqnarray*}%
and%
\begin{equation*}
\int I_{b}~dm\leq |({P}_{T_{0}}-{P}_{T_{\delta }}\left) (1)\right\vert
\alpha (M_{2}+1)+qK_{1}\delta .
\end{equation*}%
by Equation \ref{gura} then%
\begin{equation*}
\int_{A_{1}}I_{b}(\gamma )~dm(\gamma )\leq \lbrack C_{Sk}\alpha
(M_{2}+1)+qK_{1}]\delta .
\end{equation*}%
Now, let us estimate the integral of the second summand%
\begin{equation*}
II(\gamma )=\left\vert \left\vert \sum_{i=1}^{q}\frac{{F_{\delta ,T_{\delta
,i}^{-1}(\gamma )}^{\ast }}\mu |_{T_{0,i}^{-1}(\gamma )}\chi _{B_{i}}}{%
|T_{\delta ,i}^{\prime }(T_{\delta ,i}^{-1}(\gamma ))|}-\sum_{i=1}^{q}\frac{{%
F_{\delta ,T_{\delta ,i}^{-1}(\gamma )}^{\ast }}\mu |_{T_{\delta
,i}^{-1}(\gamma )}\chi _{B_{i}}}{|T_{\delta ,i}^{\prime }(T_{\delta
,i}^{-1}(\gamma ))|}\right\vert \right\vert _{W}.
\end{equation*}

Then, setting $g_{i}(a):=T_{\delta ,i}^{^{\prime }}\circ T_{\delta
,i}^{-1}(a)$ to compact notations%
\begin{eqnarray*}
\int_{I}II(\gamma )~dm(\gamma ) &\leq &\sum_{i=1}^{q}\int_{B_{i}}\frac{1}{%
|g_{i}(\gamma )|}\left\vert \left\vert {F_{\delta ,T_{\delta ,i}^{-1}(\gamma
)}^{\ast }}\left( \mu |_{T_{0,i}^{-1}(\gamma )}-\mu |_{T_{\delta
,i}^{-1}(\gamma )}\right) \right\vert \right\vert _{W}dm(\gamma ) \\
&\leq &\sum_{i=1}^{q}\int_{B_{i}}\frac{\alpha }{|g_{i}(\gamma )|}\left\vert
\left\vert \mu |_{T_{0,i}^{-1}(\gamma )}-\mu |_{T_{\delta ,i}^{-1}(\gamma
)}\right\vert \right\vert _{W}dm(\gamma )
\end{eqnarray*}%
Let us consider the change of variable $\gamma =T_{\delta ,i}(\beta ),$then%
\begin{equation*}
\int_{I}II(\gamma )~dm(\gamma )\leq \alpha \sum_{i=1}^{q}\int_{T_{\delta
,i}^{-1}(B_{i})}\left\vert \left\vert \mu |_{T_{0,i}^{-1}(T_{\delta
,i}(\beta ))}-\mu |_{\beta }\right\vert \right\vert _{W}dm(\beta ).
\end{equation*}

Since $|T_{\delta ,i}(\beta )-T_{0,i}(\beta )|\leq \delta $ and $%
T_{0,i}^{-1} $ \ is a contraction, then \ $|T_{0,i}^{-1}\circ T_{\delta
,i}(\beta )-\beta |\leq \delta $ then%
\begin{equation*}
\int_{I}II(\gamma )~dm(\gamma )\leq \alpha \int \sup_{x,y\in B(\beta ,\delta
)}(||\mu |_{x}-\mu |_{y}||_{W})dm(\beta )\leq \alpha \int osc(\mu ,\beta
,\delta )~d\beta
\end{equation*}%
and then 
\begin{equation*}
\int_{I}II(\gamma )~dm(\gamma )\leq \alpha 2\delta (M_{2}+1).
\end{equation*}%
Summing all, the statement is proved.
\end{proof}

The last statement, together with the results of the previous sections
allows to prove the following quantitative statement for skew product maps.

\begin{proposition}
\label{thm}Consider a family of skew product maps $F_{\delta }=(T_{\delta
},G_{\delta }),$ $0\leq \delta \leq D$ satisfying $Sk1,...,Sk3$ uniformly,
with $\xi =1$, and let $f_{\delta }\in \mathcal{L}^{1}$ invariant
probability measures of $F_{\delta },$ suppose:

\begin{enumerate}
\item There is $\phi \in C^{0}(\mathbb{R)},~\phi (t)$ decreasing to $0$ as $%
t\rightarrow \infty $ such that $L_{F_{0}}$ has convergence to equilibrium
with respect to norms $||~||_{1-BV}$, $||~||_{"1"}$ and speed $\phi $;

\item there is $\tilde{C}\geq 0$ such that for each $n$ 
\begin{equation*}
||L_{F_{0}}^{n}||_{\mathcal{L}^{1}\rightarrow \mathcal{L}^{1}}\leq \tilde{C};
\end{equation*}

\item there is $n\in \mathbb{N}$ such that for each $\delta \leq
D~,~d_{n,S}(T_{0},T_{\delta })\leq \delta ;$

\item for each $\delta \leq D$ there is a set $A_{2}\in Int_{n}$ such that $%
m(A_{2})\geq 1-\delta $ and for all $x\in A_{2},y\in M:$ $%
|G_{0}(x,y)-G_{\delta }(x,y)|\leq \delta .$
\end{enumerate}

Let $B=\frac{B_{T}({\hat{H}}+3q\alpha C_{h})}{1-{\lambda }^{p}{\alpha }}+1.$
Consider the function $\psi $ defined as $\psi (x)=\frac{\phi (x)}{x},$ then%
\begin{equation*}
||f_{\delta }-f_{0}||_{"1"}\leq 2\tilde{C}B^{2}C_{1}\delta (\psi ^{-1}(\frac{%
\tilde{C}BC_{1}\delta }{2})+1).
\end{equation*}%
where $C_{1}$ is the constant in the statement of Proposition \ref{UF}.
\end{proposition}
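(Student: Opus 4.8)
The plan is to deduce this directly from Theorem \ref{gen}, applied to the transfer operators $L_0:=L_{F_0}$ and $L_1:=L_{F_\delta}$, taking as weak space $\mathcal{L}^1$ with the norm $||\cdot||_{"1"}$ and as strong space $1-BV$ with the norm $||\cdot||_{1-BV}$ (recall that $p=1$ throughout this section and, by hypothesis, $\xi=1$); both operators preserve these spaces by Lemma \ref{unamis} and Proposition \ref{LYYY}. The task then reduces to verifying the four numbered hypotheses of Theorem \ref{gen}, after which the displayed inequality is obtained by substituting the resulting constants into its conclusion $||f_1-f_0||_w\leq 2\tilde M\tilde C\epsilon(\psi^{-1}(\epsilon\tilde C/2)+1)$, with $\psi(x)=\phi(x)/x$ as there.

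Two of the four hypotheses need nothing new: hypothesis 1 of Theorem \ref{gen} (convergence to equilibrium for $L_0$ with speed $\phi$, from $1-BV$ to $\mathcal{L}^1$) is exactly hypothesis 1 here, and hypothesis 3 of Theorem \ref{gen} (a uniform bound $\tilde C$ on $||L_0^n||_{\mathcal{L}^1\to\mathcal{L}^1}$) is exactly hypothesis 2 here. For hypothesis 2 of Theorem \ref{gen}, I would first observe that each $f_\delta$, being a positive probability measure, satisfies $||f_\delta|_\gamma||_{W_1}=f_\delta|_\gamma(M)$ for a.e.\ $\gamma$ (test with $g\equiv 1$ in Definition \ref{wasserstein}), hence $||f_\delta||_{"1"}=f_\delta(X)=1$; Proposition \ref{regu}, which applies because the family satisfies $Sk1,\dots,Sk3$ uniformly with $p=\xi=1$, then gives $var_1(f_\delta)\leq\frac{B_T(\hat{H}+3q\alpha C_h)}{1-\lambda\alpha}||f_\delta||_{"1"}=B-1$, so that $||f_\delta||_{1-BV}\leq B$ and one may take $\tilde M:=B$.

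The last hypothesis — the size of the perturbation in the strong-weak norm — is where Proposition \ref{UF} enters: hypotheses 3 and 4 of the present statement are precisely hypotheses 1 and 2 of Proposition \ref{UF}, so applying that proposition to the family $f_\delta$ with the variation bound $M_2:=B-1$ just obtained yields $||(F_0^{\ast}-F_\delta^{\ast})f_\delta||_{"1"}\leq C_1\delta(M_2+1)=C_1B\delta$. Since the computation in the proof of Theorem \ref{gen} only ever evaluates $L_0-L_1$ on the fixed point of $L_1$, this estimate at $f_\delta$ is exactly what is needed, and one may take $\epsilon:=C_1B\delta$ there. Substituting $\tilde M=B$, $\tilde C$, and $\epsilon=C_1B\delta$ into the conclusion of Theorem \ref{gen} gives
\[
||f_\delta-f_0||_{"1"}\leq 2\tilde M\tilde C\epsilon\Big(\psi^{-1}\Big(\frac{\epsilon\tilde C}{2}\Big)+1\Big)=2\tilde C B^2 C_1\delta\Big(\psi^{-1}\Big(\frac{\tilde C B C_1\delta}{2}\Big)+1\Big),
\]
which is the asserted bound.

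I do not expect a genuine obstacle: all of the substance has already been established — the regularization (Lasota--Yorke-type) inequality and the resulting a priori bound on the variation of invariant measures (Propositions \ref{LYYY} and \ref{regu}), the Helly-type selection principle (Theorem \ref{hL}), and, above all, the operator-distance estimate (Proposition \ref{UF}) — so the present statement is essentially a matter of assembling these pieces and tracking constants. The points that do deserve a line of care are: matching the weak and strong spaces to $\mathcal{L}^1$ and $1-BV$; the normalization $||f_\delta||_{"1"}=1$; checking that each $f_\delta$ actually lies in $1-BV$, so that the bound of Proposition \ref{regu} is applicable to it (this is the case whenever $f_\delta$ is the regular invariant measure supplied by that proposition); and noting that $\psi$ is invertible on its range, which holds because $\phi$ is continuous and strictly decreasing to $0$, hence so is $x\mapsto\phi(x)/x$.
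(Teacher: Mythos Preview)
Your proposal is correct and follows essentially the same approach as the paper: both argue by plugging the estimates from Proposition \ref{regu} (to bound $\tilde M$) and Proposition \ref{UF} (to bound $\epsilon$) into Theorem \ref{gen}, with hypotheses 1 and 2 here supplying items 1 and 3 there. If anything, your write-up is more careful than the paper's own terse proof---in particular your observation that the proof of Theorem \ref{gen} only evaluates $L_0-L_1$ at the fixed point $f_1$, so that the pointwise bound from Proposition \ref{UF} suffices, and your explicit check that $||f_\delta||_{"1"}=1$ so that $\tilde M=B$.
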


\begin{proof}
The proof is a direct application of the estimates given in the previous
section into Theorem \ref{gen}. The quantity $\tilde{M}$ appearing at Item 2
of Theorem \ref{gen} is estimated by Proposition \ref{regu}:%
\begin{equation*}
\tilde{M}\leq \frac{B_{T}({\hat{H}}+3q\alpha C_{h})}{1-{\lambda }^{p}{\alpha 
}}.
\end{equation*}%
By Proposition \ref{UF} the distance between the operators appearing at Item
4 of Theorem \ref{gen} is bounded by $\epsilon \leq C_{1}\delta (M_{2}+1)$
Where $M_{2}$ bounds the strong norm of $f_{\delta }$.
\end{proof}

We remark that the quantitative stability is proved here in the $||~||_{"1"}$
topology. This topology is strong enough to control the behavior of
observables which are discontinuous along the preserved central foliation,
see \cite{BKL} for other results on quantitative stability of the
statistical properties of discontinuous observables and related applications.

In the following section we show a class of nontrivial partially hyperbolic
skew products having power law convergence to equilibrium and will apply
this statement to these examples.

\section{Application to slowly mixing toral extensions \label{Sec2}}

To give an example of application of Proposition \ref{thm} to a class of
nontrivial system, we consider a class of "partially hyperbolic" skew
products with some discontinuities, having slow (power law) decay of
correlations and convergence to equilibrium.

We will consider a class of skew products $F=(T,G)$ (piecewise constant
toral extensions) defined as follows:

\begin{enumerate}
\item[Te1] let $l\in \mathbb{N}$. We assume that $T$ is the piecewise
expanding map on $[0,1]$ defined as 
\begin{equation*}
T(x)=lx~\func{mod}(1);
\end{equation*}

\item[Te2] the system is extended by a skew product to a system $(X,F)$
where $X=[0,1]\times \mathcal{T}^{d}$, where $\mathcal{T}^{d}$ is the $d$
dimensional torus and $F:X\rightarrow X$ is defined by%
\begin{equation}
F(x,t)=(Tx,t+\theta \varphi (x))  \label{skewprod}
\end{equation}%
where $\theta =(\theta _{1},...,\theta _{d})\in \mathcal{T}^{d}$ and $%
\varphi =1_{I}$ is the characteristic function of a set $I\subset \lbrack
0,1]$ which is an union of the sets $P_{i}$ where the branches of $T$ are
defined. In this system the second coordinate is translated by $\theta $ if
the first coordinate belongs to $I$.
\end{enumerate}

We remark that on the system $(X,F)$ the Lebesgue measure is invariant. We
will suppose that $\theta $ is of finite Diophantine type. Let us recall the
definition of Diophantine type for the linear approximation. The definition
tests the possibility of approximating $0$ by an integer linear combination
of its components.

The notation $\left\vert \left\vert .\right\vert \right\vert $ will indicate
the distance to the nearest integer vector (or number) in $\mathbb{R}$, and $%
|k|=\sup_{0\leq i\leq d}|k_{i}|$ indicates the supremum norm.

\begin{definition}
\label{linapp} The Diophantine type of $\theta =(\theta _{1},...,\theta
_{d}) $ for the linear approximation is 
\begin{equation*}
\gamma _{l}(\theta )=\inf \{\gamma ~,s.t.\,\exists c_{0}>0~s.t.\,\Vert
k\cdot \theta \Vert \geq c_{0}|k|^{-\gamma }~\forall 0\neq k\in \mathbb{Z}%
^{d}\mathbb{\}}.
\end{equation*}
\end{definition}

\subsection{The decay of correlations\label{decorr1}}

In \cite{Ru}, it was observed that piecewise constant toral extensions
cannot have exponential decay of correlations (in \cite{N2} by the way it is
shown that for some piecewise constant $SU_{2}(\mathbb{C})$ extensions there
can be exponential decay of correlations). Quantitative estimates for the
speed of decay of correlations by the arithmetical properties of the angles,
have been given in \cite{GSR}.

In this section we recall those results and see that the systems defined
above have at least polynomial decay of correlations, while for some choice
of the angles the speed of decay is proved to be actually polynomial.

\begin{definition}[Decay of correlations]
\label{def:decorr}Let $\phi ,$ $\psi :X\rightarrow \mathbb{R}$ be
observables on $X$ belonging to the Banach spaces $B,B^{\prime }$, let $\nu $
be an invariant measure for $T$. Let $\Phi :\mathbb{N\rightarrow R}$ such
that $\Phi (n)\underset{n\rightarrow \infty }{\rightarrow }0$.\ A system $%
(X,T,\nu )$ is said to have decay of correlations with speed $\Phi $ with
respect to observables in $B$ and $B^{\prime }$ if 
\begin{equation}
\left\vert \int \phi \circ T^{n}\psi d\nu -\int \phi d\nu \int \psi d\nu
\right\vert \leq \left\vert \left\vert \phi \right\vert \right\vert
_{B}\left\vert \left\vert \psi \right\vert \right\vert _{B^{\prime }}\Phi (n)
\label{decorr}
\end{equation}%
where $||~||_{B}$,$||~||_{B^{\prime }}$ are the norms in $B$ and $B^{\prime
} $.
\end{definition}

The decay of correlations depends on the class of observables considered. On
the skew products satisfying conditions $Te1$ and $Te2$ as above, it is
possible to establish an explicit upper bound for the rate of decay of
correlations which depend on the linear type of the translation angle (see 
\cite{GSR} , Lemma11).

\begin{proposition}
\label{pro:doc} In the piecewise constant toral extensions described above,
for Lipschitz observables the rate of decay is 
\begin{equation*}
\Phi (n)=O(n^{-\frac{1}{2\gamma }})
\end{equation*}%
for any $\gamma >\gamma _{l}(\theta )$.

For $C^{p}$, $C^{q}$ observables, the rate of decay is%
\begin{equation*}
\Phi (n)=O(n^{-\frac{1}{2\gamma }\max (p,q,p+q-d)})
\end{equation*}%
for any $\gamma >\gamma _{l}(\theta )$.
\end{proposition}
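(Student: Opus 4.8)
The plan is to diagonalise the fibre direction by Fourier analysis, reducing the correlation to a family of twisted transfer operators on the base, and then to control each such operator quantitatively through the Diophantine quantity $\Vert k\cdot\theta\Vert$. First I would expand the observables along $\mathcal{T}^{d}$, writing $\phi(x,t)=\sum_{k\in\mathbb{Z}^{d}}\hat{\phi}_{k}(x)e^{2\pi ik\cdot t}$ and likewise for $\psi$. Since $F(x,t)=(Tx,t+\theta\varphi(x))$ acts by translation on the fibre and the invariant measure $\nu$ is Lebesgue, the transfer operator $L_{F}$ preserves each Fourier sector and acts on the $k$-th sector as the twisted operator $L_{T,k}h(x)=\sum_{Ty=x}|T'(y)|^{-1}e^{-2\pi i(k\cdot\theta)\varphi(y)}h(y)$ of the base map $T$. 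Using $\int\phi\circ F^{n}\,\psi\,d\nu=\int\phi\,(L_{F}^{n}\psi)\,d\nu$ and integrating in $t$ one gets $\int\phi\circ F^{n}\,\psi\,d\nu-\int\phi\,d\nu\int\psi\,d\nu=\sum_{k\neq0}\int\hat{\phi}_{-k}\,(L_{T,k}^{n}\hat{\psi}_{k})\,dm$, the $k=0$ term furnishing the product of the averages with an error exponentially small in $n$ because $T(x)=lx$ is exponentially mixing on $BV$. Thus everything reduces to bounding $\sum_{k\neq0}||\hat{\phi}_{-k}||_{\infty}\,||L_{T,k}^{n}\hat{\psi}_{k}||_{1}$ (or its $L^{2}$ analogue via Parseval).

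Next I would analyse $L_{T,k}$ for $k\neq0$. Because $\varphi=1_{I}$ with $I$ a union of the branch domains, the constant function is an eigenfunction, $L_{T,k}\mathbf{1}=\lambda_{k}\mathbf{1}$ with $\lambda_{k}=(1-|I|)+|I|e^{-2\pi i(k\cdot\theta)}$, so $|\lambda_{k}|^{2}=1-4|I|(1-|I|)\sin^{2}(\pi k\cdot\theta)$ and hence $|\lambda_{k}|\leq e^{-c\Vert k\cdot\theta\Vert^{2}}$ for some $c=c(|I|)>0$. Moreover $L_{T,k}$ satisfies a Lasota--Yorke inequality on $BV$ with constants uniform in $k$ (the multiplier $e^{-2\pi i(k\cdot\theta)1_{I}}$ has uniformly bounded variation), so it is quasi-compact with essential spectral radius at most $1/l$; one checks that $\lambda_{k}$ is its dominant eigenvalue and that the rest of its spectrum stays in a disc of radius $\rho_{0}<1$ independent of $k$. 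This gives $||L_{T,k}^{n}\hat{\psi}_{k}||_{1}\leq C|a_{k}||\lambda_{k}|^{n}+C\rho_{0}^{n}||\hat{\psi}_{k}||_{BV}$, where $|a_{k}|$, the size of the projection of $\hat{\psi}_{k}$ onto the constants, and $||\hat{\psi}_{k}||_{BV}$ are controlled by the regularity of $\psi$: uniformly bounded when $\psi$ is Lipschitz, of order $|k|^{-q}$ when $\psi\in C^{q}$.

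Then I would combine the two ingredients. The Diophantine hypothesis $\Vert k\cdot\theta\Vert\geq c_{0}|k|^{-\gamma}$ gives $|\lambda_{k}|^{n}\leq e^{-cc_{0}^{2}n|k|^{-2\gamma}}$, which is negligible once $|k|$ is much smaller than $n^{1/(2\gamma)}$, while for $|k|$ of order $n^{1/(2\gamma)}$ or larger one falls back on the Fourier decay of $\hat{\phi}_{k},\hat{\psi}_{k}$ together with the contraction $||L_{T,k}||_{L^{2}\to L^{2}}\leq1$. Choosing the cut-off $R$ of order $n^{1/(2\gamma)}$ (up to a logarithmic factor, which is harmless since $\gamma$ may be taken slightly above $\gamma_{l}(\theta)$), summing the geometric-in-$n$ contributions of $|k|\leq R$ and the tail $\sum_{|k|>R}||\hat{\phi}_{-k}||\,||\hat{\psi}_{k}||$, produces $\Phi(n)=O(n^{-1/(2\gamma)})$ in the Lipschitz case, the dimension $d$ not showing up because Lipschitz regularity already gives enough decay in both variables (via Parseval and a Sobolev-type estimate in $t$). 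In the $C^{p},C^{q}$ case the same scheme applies, but the relevant tail is $\sum_{|k|>R}|k|^{-p-q}$, of order $R^{d-p-q}$, and one must weigh it against the count of near-resonant frequencies in $\{|k|\leq R\}\subset\mathbb{Z}^{d}$; carrying out this lattice-point bookkeeping and re-optimising $R$ yields the exponent $\tfrac{1}{2\gamma}\max(p,q,p+q-d)$, the $d$ entering precisely through the volume of $\{|k|\leq R\}$.

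The hard part will be the uniform-in-$k$ spectral control in the second step: besides the explicit dominant eigenvalue $\lambda_{k}$, one needs that the remainder of the spectrum of $L_{T,k}$ stays inside a fixed disc $\{|z|\leq\rho_{0}<1\}$ for all $k\in\mathbb{Z}^{d}$, in particular as $\Vert k\cdot\theta\Vert\to0$, where $L_{T,k}$ is a small perturbation of the fast-mixing $L_{T}$. For this special family---$T(x)=lx$ with $\varphi$ piecewise constant adapted to the branch partition---this should be accessible quite explicitly, whereas in a general setting it is a Dolgopyat-type oscillatory-cancellation estimate. The remaining work is purely quantitative: tracking how the rate degrades polynomially in $|k|$ and optimising the frequency cut-off so as to recover exactly the exponents $\tfrac{1}{2\gamma}$ and $\tfrac{1}{2\gamma}\max(p,q,p+q-d)$.
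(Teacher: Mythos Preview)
The paper does not prove Proposition~\ref{pro:doc} at all: it is quoted from \cite{GSR}, Lemma~11 (see the sentence immediately preceding the statement). So there is no in-paper proof to compare your proposal against.

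That said, your plan is the correct one and is essentially the strategy used in \cite{GSR}: Fourier-decompose along $\mathcal{T}^{d}$, reduce to the twisted transfer operators $L_{T,k}$ on the base, identify the dominant eigenvalue $\lambda_{k}=(1-|I|)+|I|e^{\pm 2\pi i(k\cdot\theta)}$ on constants (your computation of $|\lambda_{k}|$ is right), and then balance the Diophantine lower bound on $\Vert k\cdot\theta\Vert$ against the Fourier decay of the observables via a frequency cut-off of order $n^{1/(2\gamma)}$. One remark on the step you flag as the ``hard part'': for these very special maps ($T(x)=lx$ and $\varphi$ constant on the branch partition) the uniform-in-$k$ control of the rest of the spectrum is \emph{not} a Dolgopyat-type estimate. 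The weight $e^{-2\pi i(k\cdot\theta)\varphi}$ has bounded variation independent of $k$, so a Lasota--Yorke inequality on $BV$ with constants uniform in $k$ holds trivially, giving essential spectral radius $\leq 1/l$; and because the weight is constant on each branch, $L_{T,k}$ maps constants to constants and (more generally) does not increase the degree of trigonometric polynomials on $[0,1]$, which lets one check directly that no other peripheral eigenvalue appears. So the spectral picture you describe is available essentially for free here, and the genuine work is exactly the bookkeeping in your last paragraph.
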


\begin{remark}
We remark that the rate is actually polyomial in some cases. In \cite{GSR},
Section 5 (using a result of \cite{GP}) it is proved that if the Diophantine
type is large, then the mixing rate of the systems we consider is actually
slow, with a power law speed which depends on the Diophantine type. In a
system satisfying (\ref{decorr}), let the exponent of power law decay be
defined by 
\begin{equation*}
p=\lim \inf_{n\rightarrow \infty }\frac{-\log \Phi (n)}{\log n}.
\end{equation*}%
Let us consider the skew product of the doubling map and a circle rotation
endowed with the Lebesgue (invariant) measure. For this example the exponent 
$p$ satisfies 
\begin{equation*}
\frac{1}{2\gamma (\theta )}\leq p\leq \frac{6}{\max (2,\gamma (\theta ))-2}.
\end{equation*}
\end{remark}

\subsection{Convergence to equilibrium\label{conv}}

We will use the decay of correlations of the toral extensions to get a
convergence to equilibrium result with respect to the strong and weak norm
of our anisotropic spaces. We have from Proposition \ref{pro:doc} \ that for
Lipschitz observables the rate of decay is $O(n^{-\frac{1}{2\gamma }})$ for
any $\gamma >\gamma _{l}(\alpha )$ and for any Lipschitz observables with $%
\int f=0:$%
\begin{equation*}
|\int g\circ F^{n}~f~dm|\leq C||f||_{lip}||g||_{lip}n^{^{-\frac{1}{2\gamma }%
}}.
\end{equation*}

From this we will prove that 
\begin{equation*}
||L^{n}\mu ||_{"1"}\leq C_{4}n^{^{-\frac{1}{8\gamma }}}||\mu ||_{1-BV}.
\end{equation*}%
For this purpose our strategy is to approximate a $1-BV$ measure $\mu $
which is meant to be iterated with a Lipschitz density and use the decay of
correlations with Lipschitz observables to estimate its convergence to
equilibrium. We remark that a statement of this kind extend greatly the
kinds of measures which are meant to be iterated, as the space of $1-BV$
measures contains measures with singular behavior in the neutral direction.

The first step \ in the strategy is approximating the disintegration of $\mu 
$ with a kind of "piecewise constant one" in next Lemma.

\begin{lemma}
\label{prevv}Let us consider a uniform grid of size $\epsilon =\frac{1}{m}%
,m\in \mathbb{N}$, on the interval $[0,1]$. Given a measure $\mu $ with $%
||\mu ||_{1-BV}<\infty $. There is a measure $\mu _{\epsilon }$ such that $%
\mu _{\epsilon }$ is piecewise constant on the $\epsilon $-grid \NEG{(}$\mu
_{\epsilon }|_{x}$ is constant on each element of the grid as $x$ varies) and%
\begin{equation*}
var_{1}(\mu _{\epsilon })\leq 2var_{1}(\mu ),~||\mu _{\epsilon }||_{1}\leq
||\mu ||_{1}
\end{equation*}

furthermore suppose%
\begin{equation*}
||\mu -\mu _{\epsilon }||_{1}\leq 2\epsilon var_{1}(\mu ).
\end{equation*}
\end{lemma}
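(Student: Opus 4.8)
The plan is to construct $\mu_\epsilon$ by averaging the disintegration $\gamma \mapsto \mu|_\gamma$ over each grid cell. Concretely, for the grid cell $J_k = [(k-1)\epsilon, k\epsilon)$ I would set $\mu_\epsilon|_\gamma := \frac{1}{\epsilon}\int_{J_k} \mu|_x \, dm(x)$ for every $\gamma \in J_k$; this is a well-defined positive measure on $M$ (a Bochner-type average in the Banach space $(\mathcal{SB}(M), ||\cdot||_{W_1})$, or simply defined by its action on Lipschitz test functions) and by construction $\mu_\epsilon|_\gamma$ is constant on each grid element. Since the total mass is preserved by averaging, $\mu_\epsilon$ projects to Lebesgue on $[0,1]$ with constant density equal to $||\mu||_1$ on each cell after normalization — more carefully, one checks $||\mu_\epsilon|_\gamma||_{W_1} \le \frac{1}{\epsilon}\int_{J_k}||\mu|_x||_{W_1}\,dm(x)$ by convexity of the norm, and integrating over $\gamma$ gives $||\mu_\epsilon||_1 \le ||\mu||_1$ directly.

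For the variation bound, I would estimate $osc(\mu_\epsilon, x, r)$ in terms of $osc(\mu, \cdot, r)$: if $\gamma_1, \gamma_2 \in B(x,r)$ lie in cells $J_k, J_{k'}$, then $W_1(\mu_\epsilon|_{\gamma_1}, \mu_\epsilon|_{\gamma_2}) = W_1\big(\frac{1}{\epsilon}\int_{J_k}\mu|_s\,ds, \frac{1}{\epsilon}\int_{J_{k'}}\mu|_t\,dt\big) \le \frac{1}{\epsilon^2}\int_{J_k}\int_{J_{k'}} W_1(\mu|_s,\mu|_t)\,ds\,dt$ by convexity. Since $s \in J_k$ and $t \in J_{k'}$ are each within distance $r+\epsilon$ of $x$ (as $\gamma_1,\gamma_2 \in B(x,r)$ and cells have length $\epsilon$), this is bounded by $\sup_{s} osc(\mu, s, r+\epsilon)$-type quantities; integrating and using that we only need $var_1$ (i.e. $p=1$, so the $r^{-1}$ weight scales linearly) one arrives at $var_1(\mu_\epsilon) \le 2\, var_1(\mu)$, the factor $2$ absorbing the passage from radius $r$ to $r+\epsilon$ and the fact that two cells can differ. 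I would be slightly careful here to only claim the bound for $r \le A$ and to note $\epsilon$ can be taken small relative to $A$.

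For the approximation estimate, $||\mu - \mu_\epsilon||_1 = \int_0^1 ||\mu|_\gamma - \mu_\epsilon|_\gamma||_{W_1}\,dm(\gamma)$, and on cell $J_k$ we have $||\mu|_\gamma - \mu_\epsilon|_\gamma||_{W_1} = ||\mu|_\gamma - \frac{1}{\epsilon}\int_{J_k}\mu|_s\,ds||_{W_1} \le \frac{1}{\epsilon}\int_{J_k}||\mu|_\gamma - \mu|_s||_{W_1}\,ds \le \sup_{s,t \in J_k}||\mu|_s - \mu|_t||_{W_1} \le osc(\mu, c_k, \epsilon)$ where $c_k$ is the cell center (using that $J_k \subset B(c_k, \epsilon)$). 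Summing over cells, $||\mu - \mu_\epsilon||_1 \le \sum_k \epsilon \cdot osc(\mu, c_k, \epsilon) \approx \epsilon \int_0^1 osc(\mu,x,\epsilon)\,dx \le \epsilon \cdot \epsilon \cdot var_1(\mu,\epsilon) \cdot (\text{something})$ — one must match the $r^{-p}=r^{-1}$ normalization in $var_1$, which gives exactly the clean bound $||\mu - \mu_\epsilon||_1 \le 2\epsilon\, var_1(\mu)$ once the Riemann-sum-to-integral comparison and the cell-overlap factor are accounted for.

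The main obstacle I anticipate is not conceptual but bookkeeping: correctly tracking how the dyadic/uniform grid of size $\epsilon$ interacts with the definition of $osc(\mu,x,r)$ and $var_p$, in particular justifying the Riemann-sum comparison $\sum_k \epsilon\, osc(\mu,c_k,\epsilon) \le 2\int osc(\mu,x,\epsilon)\,dx$ (a point $x$ near a cell boundary sees essentially the oscillation of the adjacent cell too, which is where the factor $2$ comes from) and ensuring the constant in the variation bound really is $2$ and not something larger. The measurability of $\gamma \mapsto \mu|_\gamma$ and the legitimacy of the Banach-valued averaging should be routine given the disintegration framework already set up in Section \ref{spaces}.
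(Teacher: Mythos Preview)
Your proposal is correct and follows essentially the same approach as the paper: define $\mu_\epsilon$ by averaging the disintegration over each grid cell, bound $\|\mu_\epsilon\|_{"1"}$ by convexity of the norm, and bound $\|\mu-\mu_\epsilon\|_{"1"}$ by the oscillation on cells.

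One small simplification compared to what you outline: the paper avoids the Riemann-sum bookkeeping you flag as the ``main obstacle'' by observing that for every $x$ in a cell with center $x_i(x)$ one has
\[
\|\mu|_x-\mu_\epsilon|_x\|_{W_1}\le osc(\mu,x_i(x),\epsilon)\le osc(\mu,x,2\epsilon),
\]
since $B(x_i(x),\epsilon)\subset B(x,2\epsilon)$. This is now a pointwise-in-$x$ bound, so one integrates directly:
\[
\|\mu-\mu_\epsilon\|_{"1"}\le\int_0^1 osc(\mu,x,2\epsilon)\,dx
= 2\epsilon\cdot\frac{1}{2\epsilon}\int_0^1 osc(\mu,x,2\epsilon)\,dx
\le 2\epsilon\,var_1(\mu),
\]
with no cell-sum comparison needed. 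This also clarifies where the factor $2$ and the single power of $\epsilon$ come from (your line $\sum_k\epsilon\cdot osc(\mu,c_k,\epsilon)\approx\epsilon\int osc(\mu,x,\epsilon)\,dx$ has one $\epsilon$ too many on the right; the sum already equals $\int osc(\mu,c_k(x),\epsilon)\,dx$). The variation bound $var_1(\mu_\epsilon)\le 2\,var_1(\mu)$ follows by the same ``enlarge the radius'' trick, and the paper simply declares the remaining inequalities ``analogous.''
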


\begin{proof}
Let us consider $\mu _{\epsilon }$ defined by averaging in the following
way: let $x\in \lbrack 0,1]$ and $I_{i}$ be the element of the $\epsilon $%
-grid containing $x.$ Then for a measurable set $A\subseteq M$ $\mu |_{x}(A)$
is defined as%
\begin{equation*}
\mu _{\epsilon }|_{x}(A)=\int_{I_{i}}\mu _{\epsilon }|_{\gamma }(A)d\gamma .
\end{equation*}

We remark that $\mu |_{x}-\mu _{\epsilon }|_{x}\leq osc(\epsilon
,x_{i}(x),\mu )$ \ where $x_{i}(x)$ is the grid center closest to $x$, and $%
osc(\epsilon ,x_{i}(x),\mu )\leq osc(2\epsilon ,x,\mu )$ \ then%
\begin{eqnarray*}
\int ||\mu |_{x}-\mu _{\epsilon }|_{x}||_{W} &\leq &2\epsilon \frac{\int
osc(2\epsilon ,x,\mu )}{2\epsilon } \\
&\leq &2\epsilon \sup_{2\epsilon \leq A}(\frac{\int osc(2\epsilon ,x,\mu )}{%
2\epsilon }) \\
&\leq &2\epsilon var_{p}(\mu )
\end{eqnarray*}

The other inequalities are analogous.
\end{proof}

\begin{proposition}
The convergence to equilibrium of a system satisfying Te1, Te2 can be
estimated as%
\begin{equation*}
||L^{n}\nu ||_{1}\leq C_{4}n^{^{-\frac{1}{8\gamma }}}||\nu ||_{1-BV}.
\end{equation*}
\end{proposition}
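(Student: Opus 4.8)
The plan is to interpolate between the $1$-$BV$ regularity of $\nu$ and the decay of correlations for Lipschitz observables, using the approximation of Lemma \ref{prevv} in the base direction together with a mollification in the fiber direction. First I would reduce to the zero-average case: write $\nu = \nu(X)\,m + \nu_0$ where $\nu_0 \in V_s$ (here one uses that the Lebesgue measure is $F$-invariant, so $L^n$ fixes the $m$-component), and note $\|\nu_0\|_{1-BV}$ is controlled by $\|\nu\|_{1-BV}$; it then suffices to bound $\|L^n \nu_0\|_{"1"}$ for $\nu_0$ of zero average. Since $L^n$ is a weak contraction for $\|\cdot\|_{"1"}$ by Lemma \ref{unamis} (with $\alpha \le 1$ in these examples), we always have the trivial bound $\|L^n\nu_0\|_{"1"} \le \|\nu_0\|_{"1"} \le \|\nu_0\|_{1-BV}$; the point is to improve this by a power of $n$.

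The key step is the following chain of approximations, with parameters to be optimized at the end. Given $\nu$ (zero average), apply Lemma \ref{prevv} at grid scale $\epsilon$ to get $\nu_\epsilon$, piecewise constant in the base, with $\|\nu - \nu_\epsilon\|_{"1"} \le 2\epsilon\, var_1(\nu)$ and $var_1(\nu_\epsilon) \le 2\,var_1(\nu)$. Then mollify $\nu_\epsilon$ in the fiber direction at scale $\eta$ to obtain a measure $\tilde\nu$ whose density is Lipschitz on $X$, with $\|\nu_\epsilon - \tilde\nu\|_{"1"} \le C\eta$ and Lipschitz norm $\|\tilde\nu\|_{lip} \lesssim \epsilon^{-1}\eta^{-d}$ (or a similar explicit bound coming from the grid size and the mollifier). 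For the Lipschitz piece I would test $L^n\tilde\nu$ against functions $g$ with $Lip(g)\le 1$, $\|g\|_\infty\le 1$, so that $\int g\, d(L^n\tilde\nu) = \int g\circ F^n \, d\tilde\nu$, and split off the average $\int g\, d\tilde\nu$ (small, of order $\epsilon + \eta$, since $\nu$ has zero average): the decay-of-correlations estimate from Proposition \ref{pro:doc} gives $|\int g\circ F^n \,\tilde\nu\, dm - \int g\, dm \int d\tilde\nu| \le C\,\|\tilde\nu\|_{lip}\, n^{-\frac{1}{2\gamma}}$. Collecting the three error terms yields
\begin{equation*}
\|L^n\nu\|_{"1"} \le C\Big( \epsilon\, var_1(\nu) + \eta + \epsilon^{-1}\eta^{-d}\, n^{-\frac{1}{2\gamma}}\Big)\,(\text{const depending on }\|\nu\|_{1-BV}).
\end{equation*}
Optimizing $\epsilon$ and $\eta$ as powers of $n$ — balancing $\epsilon \sim \eta \sim \epsilon^{-1}\eta^{-d} n^{-1/(2\gamma)}$, which forces $\epsilon \sim \eta \sim n^{-\frac{1}{(2+d)\,2\gamma}}$ or thereabouts — produces a bound of the form $C_4\, n^{-c/\gamma}\,\|\nu\|_{1-BV}$, and the exponent $\frac{1}{8\gamma}$ in the statement should fall out of this bookkeeping (the "$8$" presumably absorbing the dimension $d$ and the factor $2$ from the correlation rate in the relevant regime, e.g. $d=1$).

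The main obstacle I anticipate is controlling the Lipschitz norm blow-up of the mollified measure $\tilde\nu$ precisely enough: the fiber mollification at scale $\eta$ costs a factor $\eta^{-d}$ in the Lipschitz constant, and the base is already only piecewise constant at scale $\epsilon$, contributing $\epsilon^{-1}$ — but one must check that $\tilde\nu$ genuinely has a globally Lipschitz density on all of $X=[0,1]\times\mathcal{T}^d$ (matching across grid cells), which may require the fiber mollification to also smooth out the base-direction jumps, or an additional base mollification, changing the exponent slightly. A secondary technical point is verifying that the decay-of-correlations estimate, stated for the physical (Lebesgue) measure and Lipschitz observables, applies when the "observable" on one side is the density of $\tilde\nu$ (i.e. rephrasing $\int g\circ F^n\, d\tilde\nu$ as a correlation integral against $m$), and tracking how $\|\cdot\|_{"1"}$ on the output relates to testing against $Lip\le1$, $\|\cdot\|_\infty\le1$ functions leaf-by-leaf versus globally — these should be routine but need the Perron–Frobenius formula of Proposition \ref{PF} and the disintegration structure to line up.
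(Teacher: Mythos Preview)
Your overall strategy --- mollify $\nu$ to a Lipschitz density and invoke the Lipschitz decay of correlations from Proposition \ref{pro:doc} --- is the same as the paper's. The genuine gap is the step you flag at the end and then dismiss as routine: passing from a single global Lipschitz test function to the leafwise norm $\|\cdot\|_{"1"}$. By definition $\|L^n\tilde\nu\|_{"1"}=\int_0^1\|(L^n\tilde\nu)|_\gamma\|_{W_1}\,d\gamma$, and the near-optimizer $g_\gamma$ on each leaf depends on $\gamma$; testing against one $g$ with $Lip(g)\le 1$ does not control this integral. The paper's fix is nontrivial: it applies Lemma \ref{prevv} a \emph{second time}, now to $\mu=L^n(fm)$, picks on each grid cell a $1$-Lipschitz $l_i$ nearly realizing $\|\mu_i\|_{W_1}$, and assembles $g=\sum_i l_i(y)h_i(x)$ with base bump functions $h_i$. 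This produces a test function with $Lip(g)\sim\epsilon^{-1}$, not $1$, and the extra $\epsilon^{-1}$ is exactly what turns your exponent into $\tfrac{1}{8\gamma}$ (your balance gives $n^{-1/(6\gamma)}$ when $d=1$, which does not match).

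This second approximation in turn forces an ingredient you do not mention: to use Lemma \ref{prevv} on $\mu=L^n(fm)$ one needs $var_1(\mu)<\infty$ with a quantitative bound. The paper obtains this from the regularization inequality (Proposition \ref{LYYY}), but that inequality is stated only for \emph{positive} measures; since $f$ has zero integral, one must add a constant $K\sim\|f\|_\infty$, use $L^n(Km)=Km$ (Lebesgue is invariant), and observe $var_1(L^n(fm))=var_1(L^n((f+K)m))$. The resulting bound $var_1(\mu)\lesssim \epsilon^{-1}\epsilon_2^{-1}$ feeds back into the error from the second piecewise-constant approximation and must be balanced as well. Your sketch also needs a genuinely Lipschitz density across grid cells; the paper achieves this by linear interpolation of the $f_i$ over transition layers, which is where the factor $\epsilon^{-1}$ in $Lip(f)\sim\epsilon^{-1}\epsilon_2^{-1}$ comes from (not from a $\eta^{-d}$ mollifier bound).
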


\begin{proof}
Consider a $1-BV$ measure $\nu $, without loss of generality we can suppose $%
||\nu ||_{1-BV}=1.$ Let us approximate $\nu $ it with a Lipschitz measure.
First let us approximate it with a piecewise constant \ measure $\nu
_{\epsilon }$ as before. We have%
\begin{equation*}
||\nu -\nu _{\epsilon }||_{1}\leq 2\epsilon var_{1}(\nu )\leq 2\epsilon .
\end{equation*}%
Let $\nu _{i}$ be such that $\nu _{i}=\nu _{\epsilon }|_{x_{i}}$ with $x_{i}$
center of the $\epsilon $ grid as before, and $f_{i}$ be the convolution $%
\gamma \ast \nu _{i}$ where $\gamma $ is a $\epsilon _{2}^{-1}$ Lipschitz
mollifier supported in $[-\epsilon _{2},\epsilon _{2}]^{d}$. $f_{i}$ is a $%
\epsilon _{2}^{-1}$ Lipschitz function. Let%
\begin{equation*}
f(x,y)=\left\{ 
\begin{array}{c}
f_{i}(y)~if~|x-x_{i}|\leq (1-\epsilon _{2})\epsilon \\ 
\phi _{i}(x)f_{i}(y)+(1-\phi _{i}(x))f_{i+1}(y)~if~x_{i}+(1-\epsilon
_{2})\epsilon \leq x\leq x_{i+1}-(1-\epsilon _{2})\epsilon%
\end{array}%
\right.
\end{equation*}%
where $\phi _{i}$ is a linear function s.t. $\phi _{i}(x_{i}+(1-\epsilon
_{2})\epsilon )=0$ and $\phi _{i}(x_{i+1}-(1-\epsilon _{2})\epsilon )=1$. We
remark that $f$ is $\sqrt{2}\epsilon _{2}^{-1}\epsilon ^{-1}$ Lipschitz, $%
\int f~dm=0$. and $||\nu _{\epsilon }-fm||_{"1"}\leq 3\epsilon _{2}.$ Hence 
\begin{equation}
||\nu -fm||_{"1"}\leq 2\epsilon var_{1}(\nu )+3\epsilon _{2}.  \label{5665}
\end{equation}

Since the convolution with a Lipschitz kernel is a weak contraction in the
Wasserstein norm, applying Lemma \ref{prevv} we get $var_{1}(fm)\leq
2var_{1}(\nu )$ and $||f||_{"1"}\leq ||\nu ||_{"1"}$. Now we apply
Proposition \ref{pro:doc} in an efficient way. The proposition concerns the
behavior of the correlation of two observables. We will consider $f$ as one
of them, and the other will be constructed in a suitable way to get the
desired statement.

Let $f$ be the Lipschitz density found above. Let $\mu =L^{n}fm$. Let $\mu
_{\epsilon }\ $its piecewise constant approximation defined as in Lemma \ref%
{prevv} and$\ \mu _{i}=\mu _{\epsilon }|_{x_{i}}$. Consider 1-Lipschitz
functions $l_{i}:\mathcal{T}^{d}\rightarrow \mathbb{R}$ such that $|~|\int
l_{i}\mu _{i}|-||\mu _{i}||_{W}~|\leq \xi $, consider functions $%
h_{i}:[0,1]\rightarrow \mathbb{R}$ such that $h_{i}=1$ on the central third
of the $i$ interval of the $\epsilon $-grid and zero elsewhere, and $%
lip(h_{i})=3\epsilon ^{-1}$. Consider $g_{i}:X\rightarrow \mathbb{R}$\
defined by $g_{i}(x,y)=$ $l_{i}(y)h_{i}(x)$ and $g=\sum_{i}g_{i}$. By what
is said above%
\begin{equation*}
||\mu _{\epsilon }||_{1}\leq 3(\xi +\int g\mu _{\epsilon })
\end{equation*}%
and by Lemma \ref{prevv}, $||L^{n}fm-\mu _{\epsilon }||_{1}\leq 2\epsilon
var_{1}(\mu )$. Then%
\begin{eqnarray*}
||L^{n}(fm)||_{1} &\leq &||\mu _{\epsilon }||_{1}+2\epsilon var_{1}(\mu ) \\
&\leq &3(\xi +\int g\mu _{\epsilon })+2\epsilon var_{1}(\mu ).
\end{eqnarray*}

Now consider $\int g~L^{n}fm.$ Since $g$ is 1-Lipschitz in the $y$
direction, we have that 
\begin{equation*}
|\int g~L^{n}f-\int g~\mu _{\epsilon }|\leq ||L^{n}fm-\mu _{\epsilon
}||_{1}\leq 2\epsilon var_{1}(\mu )
\end{equation*}%
and \ 
\begin{equation*}
||L^{n}f||_{1}\leq 3(\xi +\int g~dL^{n}f\mu _{0}+2\epsilon var_{1}(\mu
))+2\epsilon var_{1}(\mu ).
\end{equation*}

Now, since $\int fdm=0$, by \ Proposition \ref{pro:doc}%
\begin{equation*}
|\int g~dL^{n}(fm)|\leq C||f||_{lip}||g||_{lip}n^{^{-\frac{1}{2\gamma }}}
\end{equation*}%
then%
\begin{equation*}
||L^{n}f||_{1}\leq 3(\xi +C||f||_{lip}3\epsilon ^{-1}n^{^{-\frac{1}{2\gamma }%
}}+2\epsilon var_{1}(L^{n}(fm)))+2\epsilon var_{1}(L^{n}(fm)).
\end{equation*}

We recall that the Lebesgue measure is invariant for the system. Then if $K$
is a constant density such that $f+K\geq 0$ it holds $%
L^{n}(fm+Km)=L^{n}(fm)+Km.$ It holds $var_{1}(L^{n}f)=var_{1}(L^{n}(f+K))$,
since it is a positive measure, to $(f+K)m$ we can apply the regularization
inequality. Setting $B=\frac{B_{T}({\lambda }^{p}+{\hat{H}}+3qC_{h})}{1-{%
\lambda }^{p}{\alpha }}$ we get%
\begin{equation}
var_{1}(L^{n}(fm)){\leq \lambda }^{n}{\alpha }^{n}{~var}_{1}{(f)+}%
B(||f||_{"1"}+K)
\end{equation}%
since $f$ is $\sqrt{2}\epsilon _{2}^{-1}\epsilon ^{-1}$-Lipschitz and $%
||f||_{1-BV}\leq 1$, $||f||_{\infty }\leq \sqrt{2}\epsilon _{2}^{-1}\epsilon
^{-1}+1$, then $var(L^{n}(fm)){\leq \lambda }^{n}{\alpha }^{n}{~var(f)+}B(1+%
\sqrt{2}\epsilon _{2}^{-1}\epsilon ^{-1}+1)$ and 
\begin{eqnarray*}
||L^{n}(fm)||_{1} &\leq &3\xi +3C||f||_{lip}3\epsilon ^{-1}n^{^{-\frac{1}{%
2\gamma }}}+8\epsilon var_{1}(L^{n}f) \\
&\leq &3\xi +3C||f||_{lip}3\epsilon ^{-1}n^{^{-\frac{1}{2\gamma }%
}}+8\epsilon \lbrack {\lambda }^{n}{\alpha }^{n}{~var}_{1}{(f)+}B(1+\sqrt{2}%
\epsilon _{2}^{-1}\epsilon ^{-1}+1)] \\
&\leq &3\xi +3C||f||_{lip}3\epsilon ^{-1}n^{^{-\frac{1}{2\gamma }%
}}+16\epsilon \lbrack {\lambda }^{n}{\alpha }^{n}{~var}_{1}{(f)+}B(1+\sqrt{2}%
\epsilon _{2}^{-1}\epsilon ^{-1}+1)] \\
&\leq &3\xi +3C\sqrt{2}\epsilon _{2}^{-1}\epsilon ^{-1}3\epsilon ^{-1}n^{^{-%
\frac{1}{2\gamma }}}+16\epsilon \lbrack {\lambda }^{n}{\alpha }^{n}{~var}_{1}%
{(f)+}B(1+\sqrt{2}\epsilon _{2}^{-1}\epsilon ^{-1}+1)]
\end{eqnarray*}

Taking $\xi =n^{^{-\frac{1}{2\gamma }}},$ $\epsilon _{2}=n^{^{-\frac{1}{%
8\gamma }}},~\epsilon =n^{^{-\frac{1}{8\gamma }}}$ recalling that ${var}_{1}{%
(f)\leq var}_{1}(\nu )\leq 1$%
\begin{eqnarray*}
||L^{n}(fm)||_{1} &\leq &3n^{^{-\frac{1}{2\gamma }}}+9C\sqrt{2}n^{^{\frac{3}{%
8\gamma }}}n^{^{-\frac{1}{2\gamma }}}+16n^{^{-\frac{1}{8\gamma }}}(\alpha
\lambda ^{n}+2B)+\sqrt{2}Bn^{^{-\frac{1}{8\gamma }}} \\
&\leq &C_{3}n^{^{-\frac{1}{8\gamma }}}.
\end{eqnarray*}

Finally, by Equation \ref{5665} 
\begin{eqnarray*}
||L^{n}\nu ||_{1} &\leq &||L^{n}(\nu -fm)||_{"1"}+||L^{n}(fm)||_{"1"} \\
&\leq &2\epsilon ||\nu ||_{1-BV}+||L^{n}(fm)||_{"1"}+3\epsilon _{2} \\
&\leq &C_{4}n^{^{-\frac{1}{8\gamma }}}.
\end{eqnarray*}
\end{proof}

Once we have an estimate for the speed of convergence to equilibrium, by
Proposition \ref{thm}, and Remark \ref{k1}, the following holds directly:

\begin{proposition}
\label{28}Consider a family of skew product maps $F_{\delta }=(T_{\delta
},G_{\delta }),$ $0\leq \delta \leq D$ satisfying uniformly $Sk1,...,Sk3$
and let $f_{\delta }\in \mathcal{L}^{1}$ its invariant probability measures,
suppose

\begin{enumerate}
\item $F_{0}$ is a piecewise constant toral extension as defined in Section %
\ref{Sec2}, with linear Diophantine type $\gamma _{l}(\theta );$

\item There is $n\in \mathbb{N}$ such that for each $\delta \leq
D~,~d_{n,S}(T_{0},T_{\delta })\leq \delta ;$

\item for each $\delta \leq D$ there is a set $A_{2}\in Int_{n}$ such that $%
m(A_{2})\geq 1-\delta $ and for all $x\in A_{2},y\in \mathcal{T}^{d}:$ $%
|G_{0}(x,y)-G_{\delta }(x,y)|\leq \delta .$
\end{enumerate}

Then for each $\gamma >\gamma _{l}(\theta )$ there is $K_{1}$ such that for $%
\delta $ small enough%
\begin{equation*}
||f_{\delta }-f_{0}||_{"1"}\leq K_{1}\delta ^{\frac{1}{8\gamma +1}}.
\end{equation*}
\end{proposition}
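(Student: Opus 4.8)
\noindent The plan is to obtain this as a direct application of the abstract stability statement Proposition~\ref{thm}, using as inputs the results of the preceding sections, and then to read off the H\"older exponent from the power-law case of Remark~\ref{rmkhl}. So the first task is to verify, for the family $F_\delta$, the four hypotheses of Proposition~\ref{thm}.

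For hypothesis~1 I would invoke the convergence-to-equilibrium estimate proved just above in Section~\ref{conv}: fixing $\gamma>\gamma_l(\theta)$, one may take $\phi(t)=C_4\,t^{-\frac{1}{8\gamma}}$ (extended to a continuous decreasing function on $\mathbb{R}$), with strong space $1\text{-}BV$ and weak space $\mathcal{L}^1$. The one point worth spelling out is that the approximation scheme in that proof (piecewise-constant averaging followed by a Lipschitz mollification) preserves the total mass, so it applies to every zero-average $\nu\in V_s$, as Definition~\ref{d1} requires. For hypothesis~2 I would note that $F_0$ is a piecewise constant toral extension, so each fibre map $F_{0,\gamma}$ is a translation of $\mathcal{T}^d$, hence an isometry; thus $Sk2$ holds for $F_0$ with $\alpha=1$, and Lemma~\ref{unamis} gives $\|L_{F_0}^n\|_{\mathcal{L}^1\to\mathcal{L}^1}\le 1$, i.e.\ $\tilde C=1$. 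Hypotheses~3 and~4 of Proposition~\ref{thm} are literally the assumptions~2 and~3 of the present statement. Finally, since $F_\delta$ satisfies $Sk1,\dots,Sk3$ uniformly, Proposition~\ref{regu} supplies a $\delta$-independent bound $var_1(f_\delta)\le M_2$; this $M_2$ enters the constant $C_1$ of Proposition~\ref{UF} and, together with $var_1(f_0)$, bounds the $\tilde M$ of Proposition~\ref{thm}, so the constant $B$ appearing there is finite and uniform in $\delta$.

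With these inputs, Proposition~\ref{thm} gives
\[
\|f_\delta-f_0\|_{"1"}\le 2\tilde C B^2 C_1\,\delta\,\bigl(\psi^{-1}(\tfrac{\tilde C B C_1\delta}{2})+1\bigr),\qquad \psi(x)=\frac{\phi(x)}{x}=C_4\,x^{-(\frac{1}{8\gamma}+1)} .
\]
Inverting, $\psi^{-1}(y)=(y/C_4)^{-\frac{8\gamma}{8\gamma+1}}$, so for $\delta$ small the quantity $\delta\,(\psi^{-1}(\tfrac{\tilde C B C_1\delta}{2})+1)$ is of order $\delta^{\,1-\frac{8\gamma}{8\gamma+1}}=\delta^{\frac{1}{8\gamma+1}}$; absorbing the constants into a single $K_1$ yields the asserted bound. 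This is precisely the instance $\phi(x)=Cx^{-\alpha}$ of Remark~\ref{rmkhl} with $\alpha=\frac{1}{8\gamma}$, for which $1-\frac{1}{\alpha+1}=\frac{1}{8\gamma+1}$. The uniqueness of $f_0$ implicit in the statement follows from Corollary~\ref{k1}.

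I do not expect a real obstacle here: the substantive work — the regularization inequality Proposition~\ref{LYYY}, the Helly-type selection Theorem~\ref{hL}, the operator-closeness estimate Proposition~\ref{UF}, and the deduction of convergence to equilibrium from decay of correlations in Section~\ref{conv} — has already been carried out, and this proof is essentially bookkeeping. The points needing a little care are identifying the abstract weak/strong pair with $(\mathcal{L}^1, 1\text{-}BV)$, observing that $\alpha=1$ for the unperturbed toral extension so that $\tilde C$ remains bounded, and checking that the convergence-to-equilibrium bound holds on all of $V_s$ and not merely for the specific approximants used to prove it. If one wanted to improve the exponent beyond $\frac{1}{8\gamma+1}$, the limiting factor would not be the present argument but the $8\gamma$ produced in Section~\ref{conv}, where two successive approximations each cost a power of the grid parameter.
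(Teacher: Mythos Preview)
Your proposal is correct and follows exactly the approach the paper indicates: the paper states that once the convergence-to-equilibrium estimate of Section~\ref{conv} is established, Proposition~\ref{28} ``holds directly'' from Proposition~\ref{thm} (together with the power-law case of Remark~\ref{rmkhl}). You have simply spelled out the verification of the four hypotheses in more detail than the paper does, but the argument is the same.
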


\subsection{An example having H\"{o}lder behavior\label{slat}}

In this section we show a simple example of perturbation of toral extensions
satisfying assumptions $Te1$ and $Te2$ for which the statistical behavior is
actually, only H\"{o}lder stable. This shows how that Propositions \ref{thm}
and \ref{28} give a general estimate, which is quite sharph in the case of
piecewise constant toral extensions.

\begin{proposition}
\label{bahh}Consider a well approximable Diophantine irrational $\theta $
with $\gamma _{l}(\theta )>2$. Let us consider the map $F_{0}:[0,1]\times 
\mathcal{T}^{1}$ defined as a skew product $F_{0}(T_{0}(x),G_{0}(x,y))$ where%
\begin{equation*}
T_{0}(x)=2x~\func{mod}(1)
\end{equation*}%
and%
\begin{equation*}
G_{0}(x,y)=~y+\theta \varphi (x)
\end{equation*}%
where $\varphi =\chi _{\lbrack \frac{1}{2},1]}$. Consider $\gamma ^{\prime
}<\gamma _{l}(\theta )$; there is there is a sequence of reals $\delta
_{j}\geq 0$, $\delta _{j}\rightarrow 0$ and a sequence of perturbed of maps $%
\hat{F}_{\delta _{j}}(x,y)=(\hat{T}_{\delta _{j}}(x),\hat{G}_{\delta
_{j}}(x,y))$ satisfying $Sk1,..,Sk3,$ with $\hat{T}_{\delta
_{j}}(x)=T_{0}(x) $ and $||\hat{G}_{\delta _{j}}(x,y)-G_{0}(x,y)||_{\infty
}\leq 2\delta _{j}$ such that%
\begin{equation*}
||\mu _{0}-\mu _{j}||_{"1"}\geq \frac{1}{9}\delta _{j}{}^{\frac{1}{\gamma
^{\prime }-1}}
\end{equation*}%
holds for every $j$ and every $\mu _{j}$, invariant measure of $\hat{F}%
_{\delta _{j}}(x,y)$ in $\mathcal{L}^{1}$.
\end{proposition}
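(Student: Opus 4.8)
We may assume $2<\gamma'<\gamma_{l}(\theta)$: if $\gamma'\le 2$ the claimed bound is weaker than the one we will prove for some $\gamma''\in(2,\gamma_{l}(\theta))$ (using the \emph{same} sequence $\delta_{j}\to 0$, since $\delta_{j}^{1/(\gamma'-1)}\le\delta_{j}^{1/(\gamma''-1)}$ when $\gamma''>\gamma'$ and $\delta_{j}<1$). First I would extract the arithmetic input: since $\gamma'<\gamma_{l}(\theta)$, Definition \ref{linapp} (and irrationality of $\theta$) gives integers $q_{j}\to\infty$ with $\eta_{j}:=\|q_{j}\theta\|<q_{j}^{-\gamma'}$, so writing $\theta=\frac{p_{j}}{q_{j}}+\frac{\epsilon_{j}}{q_{j}}$ with $|\epsilon_{j}|=\eta_{j}$, translation by $\theta$ preserves the lattice $\frac{1}{q_{j}}\mathbb{Z}\subset\mathcal{T}^{1}$ up to an error $\frac{\eta_{j}}{q_{j}}$. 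Then set $\delta_{j}:=q_{j}^{-(\gamma'-1)}$, $\beta_{j}:=2\pi q_{j}\delta_{j}=2\pi q_{j}^{-(\gamma'-2)}$ (which tends to $0$, \emph{using $\gamma'>2$}; discard finitely many $j$ so that $\beta_{j}<1$), and define the circle diffeomorphism $P_{j}(y)=y-\frac{\beta_{j}}{2\pi q_{j}}\sin(2\pi q_{j}y)$ and $\hat F_{\delta_{j}}=(\hat T_{\delta_{j}},\hat G_{\delta_{j}})$ with $\hat T_{\delta_{j}}=T_{0}$ and $\hat G_{\delta_{j}}(x,y)=P_{j}(y+\theta\varphi(x))$. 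Since $P_{j}'(y)=1-\beta_{j}\cos(2\pi q_{j}y)\in[1-\beta_{j},1+\beta_{j}]$, the map $P_{j}$ is an orientation-preserving homeomorphism contracting (rate $\ge 1-\beta_{j}$) near $\frac{1}{q_{j}}\mathbb{Z}$ and expanding near $\frac{1}{q_{j}}\mathbb{Z}+\frac{1}{2q_{j}}$. A routine verification gives $Sk1$ ($\hat T_{\delta_{j}}=2x$), $Sk2$ with $\alpha=1+\beta_{j}<2$ (so $\lambda^{\xi}\alpha<1$ for $\lambda=\frac12$, $\xi=1$), and $Sk3$ (the $x$-dependence of $\hat G_{\delta_{j}}$ is a step function with a single jump at $x=\frac12$ of size $\le\alpha|\theta|\le1$), all with $j$-uniform constants, and $\|\hat G_{\delta_{j}}-G_{0}\|_{\infty}=\|P_{j}-\mathrm{id}\|_{\infty}=\frac{\beta_{j}}{2\pi q_{j}}=\delta_{j}\le 2\delta_{j}$.

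The main obstacle is that the statement quantifies over \emph{every} invariant measure in $\mathcal{L}^{1}$, so one cannot just study the physical measure or the topological attractor in the fibres. I would get around this with a Lyapunov-type identity. Take $V(x,y)=-\cos(2\pi q_{j}y)$, which is bounded by $1$. Expanding $-\cos(u-\beta_{j}\sin u)$ to second order in $\beta_{j}$ — with $u=2\pi q_{j}y$ on $\{\varphi=0\}$ and $u=2\pi q_{j}y+2\pi\epsilon_{j}$ on $\{\varphi=1\}$ (here the branch $\varphi=1$ produces the extra $O(\eta_{j})$ coming from $q_{j}\theta\notin\mathbb{Z}$) — yields the pointwise bound
\[
V\circ\hat F_{\delta_{j}}-V\ \le\ -\,\beta_{j}\sin^{2}(2\pi q_{j}y)+C\,(\eta_{j}+\beta_{j}^{2}),\qquad C\ \text{absolute}.
\]
Integrating against an arbitrary invariant probability $\nu$ and using $\int V\circ\hat F_{\delta_{j}}\,d\nu=\int V\,d\nu$ gives $\int\sin^{2}(2\pi q_{j}y)\,d\nu\le C(\eta_{j}/\beta_{j}+\beta_{j})$; since $\eta_{j}/\beta_{j}\le\frac{1}{2\pi}q_{j}^{-2}$ and $\beta_{j}=2\pi q_{j}^{-(\gamma'-2)}$, and $\sin^{2}(2\pi q_{j}y)\ge 16\,q_{j}^{2}\,\mathrm{dist}(y,\tfrac{1}{2q_{j}}\mathbb{Z})^{2}$, Cauchy--Schwarz then gives
\[
\int_{X}\mathrm{dist}\!\big(y,\tfrac{1}{2q_{j}}\mathbb{Z}\big)\,d\nu\ \le\ C'\,q_{j}^{-\min(\gamma',4)/2}\ =\ o\!\big(q_{j}^{-1}\big),
\]
where $\gamma'>2$ is exactly what makes the exponent exceed $1$. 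This says every invariant measure of $\hat F_{\delta_{j}}$ is concentrated in a shrinking neighbourhood of the $2q_{j}$-point set $\tfrac{1}{2q_{j}}\mathbb{Z}$.

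Finally I would feed this into the weak norm. The function $g(y)=\mathrm{dist}(y,\tfrac{1}{2q_{j}}\mathbb{Z})$ is $1$-Lipschitz with $\|g\|_{\infty}\le\tfrac{1}{4q_{j}}\le1$, hence admissible in Definition \ref{wasserstein}, and $\int g\,dm_{\mathcal{T}^{1}}=\tfrac{1}{8q_{j}}$ because $\tfrac{1}{2q_{j}}\mathbb{Z}$ cuts $\mathcal{T}^{1}$ into $2q_{j}$ equal intervals; also $\mu_{0}=m$ has $\mu_{0}|_{\gamma}=m_{\mathcal{T}^{1}}$ for every $\gamma$. For any invariant $\mu_{j}\in\mathcal{L}^{1}$, fibrewise $\|\mu_{0}|_{\gamma}-\mu_{j}|_{\gamma}\|_{W_{1}}\ge\int g\,dm_{\mathcal{T}^{1}}-\int g\,d\mu_{j}|_{\gamma}$; integrating in $\gamma$, using the disintegration identity $\int_{0}^{1}\!\!\int g\,d\mu_{j}|_{\gamma}\,dm(\gamma)=\int_{X}g\,d\mu_{j}$ and the concentration estimate,
\[
\|\mu_{0}-\mu_{j}\|_{"1"}\ \ge\ \tfrac{1}{8q_{j}}-\int_{X}\mathrm{dist}\!\big(y,\tfrac{1}{2q_{j}}\mathbb{Z}\big)\,d\mu_{j}\ \ge\ \tfrac{1}{8q_{j}}-o\!\big(q_{j}^{-1}\big)\ \ge\ \tfrac{1}{9q_{j}}
\]
for all large $j$ (re-index the sequence to get "every $j$"). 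Since $q_{j}=\delta_{j}^{-1/(\gamma'-1)}$ and $\delta_{j}\to0$, this is the asserted bound. The only delicate points are the $j$-uniformity of the $Sk1$--$Sk3$ constants (secured by $\beta_{j}\to0$) and the Taylor estimate for $V\circ\hat F_{\delta_{j}}-V$ on both branches; the rest is bookkeeping.
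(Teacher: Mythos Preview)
Your proof is correct, and it takes a genuinely different route from the paper's. The paper proceeds in two steps: it first replaces $\theta$ by the nearby rational $p_{j}/k_{j}$ (so that the fibre rotation becomes exactly periodic of period $k_{j}$) and then composes with a further $C^{\infty}$ perturbation $D_{\delta_{j}}(y)=y+\delta_{j}g(y)$ designed so that the periodic orbit $\{y_{1},\dots,y_{k_{j}}\}$ becomes a topological attractor and its half-shift a repeller. The argument that \emph{every} invariant measure in $\mathcal{L}^{1}$ satisfies the lower bound is then reduced to the claim that there are only two such measures, both supported on the lattice $\frac{1}{k_{j}}\mathbb{Z}$ (or its half-shift), for which $\|\mu_{0}-\mu_{j}\|_{"1"}\geq\frac{1}{9k_{j}}$ is checked directly.

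You instead keep $\theta$ irrational and post-compose the fibre map with the single diffeomorphism $P_{j}(y)=y-\frac{\beta_{j}}{2\pi q_{j}}\sin(2\pi q_{j}y)$; your main tool is the Lyapunov function $V=-\cos(2\pi q_{j}y)$, whose invariance identity $\int(V\circ\hat F_{\delta_{j}}-V)\,d\nu=0$ forces every invariant probability to satisfy $\int\sin^{2}(2\pi q_{j}y)\,d\nu=O(q_{j}^{-(\gamma'-2)})$ and hence to concentrate (in the mean) within $o(q_{j}^{-1})$ of the lattice $\frac{1}{2q_{j}}\mathbb{Z}$. This avoids both the rationalisation step and the topological classification of invariant measures, replacing them by a quantitative energy inequality; in return one does not get an exact description of the invariant measures, but only the concentration estimate needed for the $\|\cdot\|_{"1"}$ lower bound via the test function $g(y)=\mathrm{dist}(y,\frac{1}{2q_{j}}\mathbb{Z})$. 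The paper's argument is more elementary and yields a sharper structural statement (finitely many invariant measures, identified explicitly), while your argument is a one-shot construction that handles the quantifier ``every $\mu_{j}$'' directly and would adapt more readily to situations where an exactly periodic fibre cannot be arranged.
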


\begin{proof}
We remark that since there is convergence to equilibrim for $F_{0},$ the
Lebesgue measure $\mu _{0}$ on $[0,1]\times \mathcal{T}^{1}$ is the unique
invariant measure in $\mathcal{L}^{1}$ for $F_{0}.$ Consider $F_{\delta
}=(T_{0}(x),~y+(\delta +\theta )\varphi (x)))$. For a sequence of values of  
$\delta $ converging to $0$ it holds that $(\delta +\theta )$ is rational.
For this sequence the map $y\rightarrow y+(\delta +\theta )$ ($:\mathcal{T}%
^{1}\rightarrow \mathcal{T}^{1}$ ) is such that, $0$\ has a periodic orbit.
Let $y_{1}=0,...,y_{k}$ be this orbit. For these parameters, consider the
product measure $\mu _{n}=\frac{1}{k}\sum_{i\leq k}m\otimes \delta _{y_{i}}$%
, where $m$ is the Lebesgue measure on $[0,1]$ and $\delta _{y_{i}}$ is the
delta measure placed on $y_{i}$. \ The measure $\mu _{n}$ is invariant for $%
F_{\delta }(x,y).$ and is in $\mathcal{L}^{1}$. It is easy to see that $%
||\mu _{0}-\mu _{n}||_{"1"}\geq \frac{1}{9}\frac{1}{k}$. Now the Diophantine
type of $\theta $ will give an estimate for the relation between $\delta $
and $k$. Indeed let $\gamma ^{\prime }<\gamma (\theta )$, by the Diophantine
type of $\theta $ we know that there are infinitely many $k_{j}$ and
integers $p_{j}$ such that $|k_{j}\theta -p_{j}|\leq |\frac{1}{k_{j}}%
|^{\gamma ^{\prime }}$ then $|\theta -\frac{p_{j}}{k_{j}}|\leq |\frac{1}{%
k_{j}}|^{\gamma ^{\prime }-1}$. Let us now consider $\delta _{j}=-\theta +%
\frac{p_{j}}{k_{j}}$, it holds $|\delta _{j}|\leq |\frac{1}{k_{j}}|^{\gamma
^{\prime }-1}$ and the angle $(\delta _{j}+\theta )$ generates a periodic
orbit of period $k_{j}$. This happens by perturbing the second coordinate of
the map by a quantity which is less than $|\frac{1}{k_{j}}|^{\gamma ^{\prime
}-1}$.\ Summarizing, for the map $F_{\delta _{j}}$ we have that there is no
perturbation on the first coordinate, for the second coordinate, $%
||G_{0}-G_{\delta _{j}}||_{\infty }\leq \delta _{j}$ and denoting as $\mu
_{j}$ the invariant measure on the periodic orbit defined before it holds%
\begin{equation*}
||\mu _{0}-\mu _{j}||_{"1"}\geq \frac{1}{9}\delta _{j}{}^{\frac{1}{\gamma
^{\prime }-1}}.
\end{equation*}

This example can further be improved by perturbing the map $F_{\delta _{j}}$
to a new map $\hat{F}_{\delta _{j}}$ in a way that $\mu _{j}$ (a measure
supported on the attractor of $\hat{F}_{\delta _{j}}$) and $\mu _{j}+\frac{%
k_{j}}{2}\footnote{%
Defined as $[\mu _{j}+\frac{1}{2k_{j}}](A)=\mu _{j}(A-\frac{1}{2k_{j}})$ \
for each measurable set $A$ in $\mathcal{T}^{1}$. Where $A-\frac{1}{2k_{j}}$
is the translation of the set $A$ by $-\frac{1}{2k_{j}}$ .}$ (supported on
the repeller of $\hat{F}_{\delta _{j}}$) are the only invariant measures in $%
\mathcal{L}^{1}$ for $\hat{F}_{\delta _{j}}$ and $\mu _{j}$ is the unique
physical measure for the system. This can be done by making a small further $%
C^{\infty }$ perturbation on $G$. Let us denote again by $%
y_{1},...,y_{k_{j}} $\ the periodic orbit of $0$\ as before. Let us consider
a $C^{\infty }$ function $g:[0,1]\rightarrow \lbrack 0,1]$ such that:

\begin{itemize}
\item $g$ is negative on the each interval $[y_{i},y_{i}+\frac{1}{2k_{j}}]$
and positive on each interval $[y_{i}+\frac{1}{2k_{j}},y_{i+1}]$ (so that $%
g(y_{i}+\frac{1}{2k_{j}})=0$ );

\item $g^{\prime }$ is positive in each interval $[y_{i}+\frac{1}{3k_{j}}%
,y_{i+1}-\frac{1}{3k_{j}}]$ and negative in $[y_{i},y_{i+1}]-[y_{i}+\frac{1}{%
3k_{j}},y_{i+1}-\frac{1}{3k_{j}}]$.
\end{itemize}

Considering $D_{\delta }:\mathcal{T}^{1}\rightarrow \mathcal{T}^{1}$ defined
by $D_{\delta }(x)=x+\delta g(x)$~$\func{mod}(1)$ \ it holds that the
iteration of this map send all the space but the set $\{y_{i}+\frac{1}{2k_{j}%
}~s.t.~i\leq k_{j}\}$ (which is a repeller) to the set $\{y_{i}~s.t.~i\leq
k_{j}\}$ (the attractor). Then define $\hat{F}_{\delta _{j}}$ as:%
\begin{equation*}
\hat{F}_{\delta _{j}}(x,y)=(T_{\delta _{j}}(x),~D_{\delta _{j}}(y+(\delta
_{j}+\theta )\varphi (x))).
\end{equation*}

The claim directly follows from the remark that for the map $(\hat{F}%
_{\delta _{j}})^{k_{j}}$ the sets \ $\Gamma _{1}:=[0,1]\times
\{y_{i}~s.t.~i\leq k_{j}\}$ and $\Gamma _{2}:=[0,1]\times \{y_{i}+\frac{1}{%
2k_{j}}~s.t.~i\leq k_{j}\}$ are invariant and the set $\Gamma _{1}$ attracts
the whole $[0,1]\times \mathcal{T}^{1}-\Gamma _{2}$.
\end{proof}

The construction done in the previous proof can be extended to show H\"{o}%
lder behavior for the average of a given regular observable. We show an
explicit example of such an observable for a skew product with a particular
angle $\theta $.

\begin{proposition}
\label{30}Consider a map $F_{0}$ as above with the rotation angle $\theta
=\sum_{1}^{\infty }2^{-2^{2i}}$. With%
\begin{equation*}
T_{0}(x)=2x~\func{mod}(1)
\end{equation*}%
and%
\begin{equation*}
G_{0}(x,y)=~y+\theta \varphi (x)
\end{equation*}%
as in Proposition \ref{bahh}. Let $\hat{F}_{\delta _{j}}$ be its
perturbations as described in the proof of the proposition and $\mu _{j}$
their invariant measures in $\mathcal{L}^{1}.$ There is an observable $\psi
:[0,1]\times \mathcal{T}^{1}\rightarrow \mathbb{R}$ \ with derivative in $%
L^{2}$ and $C\geq 0$ such that 
\begin{equation*}
|\int \psi d\mu _{0}-\int \psi d\mu _{j}|\geq C\sqrt{\delta _{j}}.
\end{equation*}
\end{proposition}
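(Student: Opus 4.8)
The plan is to build an observable $\psi$ whose average under the physical measures $\mu_j$ deviates from its average under $\mu_0$ by an amount controlled by the distance between the periodic orbit points of the perturbed rotation and the corresponding orbit of $\mu_0$, and to exploit the specific Liouville-like structure of $\theta=\sum_{i\geq 1}2^{-2^{2i}}$ to make that deviation of order $\sqrt{\delta_j}$ while keeping $\psi'\in L^2$. First I would recall from the proof of Proposition \ref{bahh} that, along the sequence $\delta_j$, the angle $\delta_j+\theta$ is rational with denominator $k_j$, so $\mu_j$ is (essentially) the uniform average $\frac1{k_j}\sum_{i\leq k_j} m\otimes\delta_{y_i}$ on the attracting periodic orbit $\{y_1,\dots,y_{k_j}\}\subset\mathcal{T}^1$, while $\mu_0=m\otimes m$. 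The truncation $\theta_n=\sum_{i=1}^n 2^{-2^{2i}}$ is rational with denominator a power of $2$, namely $2^{2^{2n}}$, and $|\theta-\theta_n|\asymp 2^{-2^{2(n+1)}}$, so here $k_j=2^{2^{2n}}$ and $\delta_j\asymp k_j^{-c}$ with $c$ related to $\gamma_l(\theta)$; concretely $\gamma_l(\theta)=2$ for this $\theta$... one should instead track the exact relation $\delta_j\asymp k_j^{-(\gamma'-1)}$ coming from $|k_j\theta-p_j|\leq k_j^{-\gamma'}$ as in the proof of Proposition \ref{bahh}, and note that for this special $\theta$ one can take $\gamma'$ arbitrarily close to $\gamma_l(\theta)$ with the periodic orbit consisting of the dyadic points $\{i/k_j\}$, so that the orbit points are exactly equispaced with spacing $1/k_j$.

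Next I would choose $\psi(x,y)=\psi(y)$ to be a fixed function on $\mathcal{T}^1$, independent of $x$, designed so that its integral against the equispaced atomic measure $\frac1{k_j}\sum_{i}\delta_{i/k_j}$ picks out a single Fourier mode whose index is tuned to $k_j$. The classical identity $\frac1{k}\sum_{i=0}^{k-1} e^{2\pi i n (i/k)} = 1$ when $k\mid n$ and $0$ otherwise means that if $\psi(y)=\sum_{m\geq 1} a_m\cos(2\pi q_m y)$ for a lacunary sequence of frequencies $q_m$ chosen so that $q_m=k_{j(m)}$ for the subsequence of denominators, then $\int\psi\,d\mu_{j(m)} = \int\psi\,d(m\otimes\delta\text{-orbit}) \asymp a_m$, whereas $\int\psi\,d\mu_0=0$ since $\psi$ has zero mean and $\mu_0$ is Lebesgue. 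To get $|\int\psi d\mu_0-\int\psi d\mu_j|\geq C\sqrt{\delta_j}$ I would set $a_m\asymp\sqrt{\delta_{j(m)}}\asymp k_{j(m)}^{-(\gamma'-1)/2}=q_m^{-(\gamma'-1)/2}$. The derivative then has Fourier coefficients $\asymp q_m a_m\asymp q_m^{1-(\gamma'-1)/2}=q_m^{(3-\gamma')/2}$, and $\psi'\in L^2$ requires $\sum_m q_m^{3-\gamma'}<\infty$; since the $q_m$ grow doubly-exponentially (each $k_j$ roughly the square of the previous along this $\theta$), this sum converges for \emph{any} fixed exponent, in particular when $\gamma'$ is close to $\gamma_l(\theta)>2$. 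Here the super-lacunarity of the denominators of $\theta$ is exactly what rescues $L^2$-integrability despite the frequencies appearing with a positive power.

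Finally I would assemble the estimate: fix such a $\psi$, and for each $j$ in the chosen subsequence compute
\begin{equation*}
\Big|\int\psi\,d\mu_0-\int\psi\,d\mu_j\Big| = \Big|\int\psi\,d\mu_j\Big| \geq |a_{j}| - \sum_{m\neq j}|a_m|\,\Big|\tfrac1{k_j}\sum_i\cos(2\pi q_m i/k_j)\Big|,
\end{equation*}
and observe that for $m<j$ the inner average vanishes identically (since then $q_m=k_{j(m)}$ divides $k_j$ or at least the relevant orthogonality holds for the dyadic orbit), while for $m>j$ the tail $\sum_{m>j}|a_m|$ is super-exponentially smaller than $|a_j|$, so the whole error term is bounded by $\frac12|a_j|$ for $j$ large; hence $|\int\psi d\mu_0-\int\psi d\mu_j|\geq\frac12 a_j\geq C\sqrt{\delta_j}$. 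The main obstacle I anticipate is the bookkeeping in this last step: one must verify that the attracting periodic orbit of the rational rotation $\delta_j+\theta$ really is the equispaced set $\{i/k_j\}$ (or close enough that the orthogonality relations used are valid up to a harmless constant), and one must handle the slightly atomic-versus-continuous discrepancy between $\mu_j$ and the idealized product measure, plus the effect of the extra $C^\infty$ perturbation $D_{\delta_j}$ used in Proposition \ref{bahh} to select the physical measure — it displaces the orbit by $O(\delta_j)$, which must be shown not to destroy the $\sqrt{\delta_j}$ lower bound (it does not, since $O(\delta_j)=o(\sqrt{\delta_j})$ and $\psi'\in L^\infty$ locally, or one absorbs it into the constant $C$). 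Controlling simultaneously the $L^2$ norm of $\psi'$, the lower bound on $|\int\psi d\mu_j|$, and the smallness of the cross terms is the delicate balance, but the doubly-exponential growth of the $k_j$ gives enough room for all three.
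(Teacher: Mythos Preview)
Your approach is the paper's: take $\psi(y)=\sum_i a_i\cos(2\pi k_i y)$ with $k_i=2^{2^{2i}}$, use that the average of $e^{2\pi i q_m y}$ over the equispaced orbit $\{\ell/k_j:\ell<k_j\}$ is $0$ for $m<j$ and $1$ for $m\geq j$, and bound the tail by super-lacunarity; the paper chooses $a_i=k_i^{-2}$ explicitly. Two quantitative slips need repair. First, $\gamma_l(\theta)\geq 4$, not $2$: since $2^{2(j+1)}=4\cdot 2^{2j}$ one has $\|k_j\theta\|\asymp 2^{-2^{2(j+1)}}=k_j^{-4}$, hence $\delta_j\asymp k_j^{-4}$ and $\sqrt{\delta_j}\asymp k_j^{-2}$, which is exactly the paper's $a_j$. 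Second, your assertion that $\sum_m q_m^{3-\gamma'}<\infty$ ``for any fixed exponent'' thanks to double-exponential growth is false whenever the exponent is nonnegative; lacunarity accelerates divergence just as it accelerates convergence. What actually works is that with $a_i=k_i^{-2}$ the derivative has Fourier coefficients $\asymp k_i^{-1}$, so $\|\psi'\|_{L^2}^2\asymp\sum_i k_i^{-2}<\infty$ for the honest reason that the exponent is negative. (Also: the orthogonality for $m<j$ holds because $k_j\nmid q_m$, not because $q_m\mid k_j$; and the concern about $D_{\delta_j}$ moving the orbit is moot, since by construction each $y_i$ is a fixed point of $D_{\delta_j}$.)
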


\begin{proof}
We recall that that $\sum_{n+1}^{\infty }2^{-2^{2i}}\leq 2^{-2^{2(n+1)}+1}$.
By this $||2^{2^{2n}}\theta ||\leq 2^{-2^{2(n+1)}+1}$ and the Diophantine
type of $\theta $ is greater than $4$. Following the construction above, we
have that with a perturbation of size less than $2^{-2^{2(n+1)}+1}$ the
angles $\theta _{j}=\sum_{1}^{j}2^{-2^{2i}}$ generate on the second
coordinate of the skew product orbits of period $2^{2^{2j}}$. Now let us
construct a suitable observable which can "see" the change of the invariant
measure under this perturbation. Let us consider 
\begin{equation}
\psi (x,y)=\sum_{1}^{\infty }\frac{1}{(2^{2^{2i}})^{2}}\cos (2^{2^{2i}}2\pi
y)  \label{obss}
\end{equation}
and $\psi _{k}(x,y)=\sum_{1}^{k}\frac{1}{(2^{2^{2i}})^{2}}\cos
(2^{2^{2i}}2\pi y)$ Since for the observable $\psi $, the $i$-th Fourier
coefficient decreases like $i^{-2}$, then $\psi $ has a derivative in $L^{2}$%
. Let $x_{1}=0,...,x_{2^{2^{2j}}}$ be the periodic orbit of $0$ for $%
y\rightarrow y+\theta _{j}$, and $\mu _{j}=\frac{1}{2^{2^{2i}}}\sum \delta
_{x_{i}}$ the physical measure supported on it. Since $2^{2^{2j}}$\ divides $%
2^{2^{2(j+1)}}$ then\ $\sum_{i=1}^{2^{2^{2j}}}\psi _{k}(x_{i})=0$ for every $%
k<j$, thus $\int \psi _{j-1}~d\mu _{j}=0.$ Then 
\begin{eqnarray*}
v_{j} &:&=\int \psi ~d\mu _{j}\geq \frac{1}{(2^{2^{2j}})^{2}}%
-\sum_{j+1}^{\infty }\frac{1}{(2^{2^{2i}})^{2}} \\
&\geq &2^{-2^{2j+1}}-2^{-2^{2(j+1)}+1}.
\end{eqnarray*}

And for $j$ big enough%
\begin{equation*}
2^{-2^{2j+1}}-2^{-2^{2(j+1)}+1}\geq \frac{1}{2}(2^{-2^{2j}})^{2}.
\end{equation*}%
Summarizing, with a perturbation of size $\delta _{j}=\sum_{j+1}^{\infty
}2^{-2^{2i}}\leq 2\ast 2^{-2^{2(j+1)}}=2^{-2^{2(j+1)}}=2(2^{-2^{2j}})^{4}$
we get a change of average for the observable $\psi $ from $\int \psi dm=0$
to $v_{n}\geq \frac{1}{2}(2^{-2^{2j}})^{2}.$ Hence we have that there is a $%
C\geq 0$ such that with a perturbation of size $\delta _{j}$ we get a change
of average for the observable $\psi $ of size bigger than $C\sqrt{\delta _{j}%
}.$
\end{proof}

\begin{remark}
Using $\frac{1}{(2^{2^{2i}})^{\alpha }}$ instead of $\frac{1}{%
(2^{2^{2i}})^{2}}$ in (\ref{obss} ) we can obtain a smoother observable.
Using rotation angles with bigger and bigger Diophantine type it is possible
to obtain a dependence of the physical measure to perturbations with worse
and worse H\"{o}lder exponent. Using angles with infinite Diophantine type
it is possible to have a behavior whose modulus of continuity is worse than
the H\"{o}lder one.
\end{remark}

\end{document}